\newtheorem{theorem}{Theorem}[section]
\newtheorem{main-theorem}{Main Theorem}[section]
\newtheorem{corollary}[theorem]{Corollary}
\newtheorem{lemma}[theorem]{Lemma}
\theoremstyle{definition}
\newtheorem{definition}[theorem]{Definition}
\newtheorem{remark}[theorem]{Remark}
\newtheorem{example}[theorem]{Example}
\newtheorem*{proof-idea}{Proof Idea}
\newcommand{\proofpart}[1]{%
  \par%
  \addvspace{\medskipamount}%
  \noindent\emph{#1.}
  \@afterheading%
}
\newcommand*{\define}[1]{\emph{#1}}
\providecommand\iff{\DOTSB\;\Longleftrightarrow\;}
\providecommand\implies{\DOTSB\;\Longrightarrow\;}
\DeclarePairedDelimiter\parens{\lparen}{\rparen}
\DeclarePairedDelimiter\abs{\lvert}{\rvert}
\DeclarePairedDelimiterX\inner[2]{\langle}{\rangle}{#1,#2} 
\DeclarePairedDelimiter\set{\{}{\}} 
\DeclarePairedDelimiter\net{\{}{\}}
\DeclarePairedDelimiter\family{\{}{\}}
\DeclarePairedDelimiter\sequence{\lparen}{\rparen}
\DeclarePairedDelimiter\equivclass{\lbrack}{\rbrack}
\DeclarePairedDelimiter\ceil{\lceil}{\rceil}
\mathchardef\breakingcomma\mathcode`\,
\newcommand*\ntuple[1]{\lparen\mathcode`\,=\string"8000 #1\rparen}
\newcommand*{\N}{\mathbb{N}}
\newcommand*{\R}{\mathbb{R}}
\newcommand*{\ball}{\mathbb{B}}
\newcommand*{\sphere}{\mathbb{S}}
\newcommand*{\euler}{\mathrm{e}} 
\newcommand*{\unityfnc}{\mathds{1}} 
\newcommand*{\nullityfnc}{0} 
\DeclareMathOperator{\identity}{id}
\DeclareMathOperator{\Exists}{\exists}
\DeclareMathOperator{\ForEach}{\forall}
\newcommand*\Holds{:}
\newcommand*\SuchThat{:}
\newcommand*\leftaction{\triangleright}
\newcommand*\rightsemiaction{\mathbin{\protect\scalerel*{\trianglelefteqslant}{\rhd}}} 
\DeclareMathOperator{\Sym}{Sym}
\def\moverlay{\mathpalette\mov@rlay}
\def\mov@rlay#1#2{\leavevmode\vtop{%
   \baselineskip\z@skip \lineskiplimit-\maxdimen
   \ialign{\hfil$\m@th#1##$\hfil\cr#2\crcr}}}
\newcommand{\charfusion}[3][\mathord]{
    #1{\ifx#1\mathop\vphantom{#2}\fi
        \mathpalette\mov@rlay{#2\cr#3}
      }
    \ifx#1\mathop\expandafter\displaylimits\fi}
\newcommand{\cupdot}{\charfusion[\mathbin]{\cup}{\cdot}}
\newcommand*\boundary{\partial}
\DeclareMathOperator{\powerset}{\mathcal{P}}
\newcommand*\occurs{\sqsubseteq}
\newcommand*\semioccurs{\mathrel{\ooalign{$\occurs$\cr\hidewidth\raise.225ex\hbox{$\circ\mkern.5mu$}\cr}}} 
\newcommand*\suchthat{\mid} 
\newcommand*\from{\colon} 
\newcommand*\quotient{\slash}
\renewcommand*{\restriction}{\mathord{\upharpoonright}}
\newcommand*{\blank}{\mathord{\_}} 
\DeclareMathOperator{\vdegree}{deg}
\newcommand*\dominates{\succcurlyeq} 
\newcommand*\dominatedby{\preccurlyeq} 
\newcommand*\equivalent{\sim}
\newcommand*\inequivalent{\nsim}
\newcommand*{\graffito}[1]{}
\newcommand*{\mathnote}[1]{}
\renewcommand*{\index}[1]{}
\begin{document}
  \baselineskip=17pt

  \title[Right Amenability And Growth]{Right Amenability And Growth Of Finitely Right Generated Left Group Sets}
  \author{Simon Wacker}
  \address{%
    Simon Wacker\\
    Department of Informatics\\
    Karlsruhe Institute of Technology\\
    Am Fasanengarten 5\\
    76131 Karlsruhe\\
    Germany
  }
  \email{simon.wacker@kit.edu}

  \keywords{group actions, generating sets, Cayley graphs, growth rates, isoperimetric constants, amenability}

  \maketitle

  \begin{abstract}
    We introduce right generating sets, Cayley graphs, growth functions, types and rates, and isoperimetric constants for left homogeneous spaces equipped with coordinate systems; characterise right amenable finitely right generated left homogeneous spaces with finite stabilisers as those whose isoperimetric constant is $0$; and prove that finitely right generated left homogeneous spaces with finite stabilisers of sub-exponential growth are right amenable, in particular, quotient sets of groups of sub-exponential growth by finite subgroups are right amenable.
    %
  \end{abstract}

  The notion of amenability for groups was introduced by John von Neumann in 1929. It generalises the notion of finiteness. A group $G$ is \emph{left} or \emph{right amenable} if there is a finitely additive probability measure on $\powerset(G)$ that is invariant under left and right multiplication respectively. Groups are left amenable if and only if they are right amenable. A group is \emph{amenable} if it is left or right amenable.

  The definitions of left and right amenability generalise to left and right group sets respectively. A left group set $\ntuple{M, G, \leftaction}$ is \emph{left amenable} if there is a finitely additive probability measure on $\powerset(M)$ that is invariant under $\leftaction$. There is in general no natural action on the right that is to a left group action what right multiplication is to left group multiplication. Therefore, for a left group set there is no natural notion of right amenability.

  A transitive left group action $\leftaction$ of $G$ on $M$ induces, for each element $m_0 \in M$ and each family $\family{g_{m_0, m}}_{m \in M}$ of elements in $G$ such that, for each point $m \in M$, we have $g_{m_0, m} \leftaction m_0 = m$, a right quotient set semi-action $\rightsemiaction$ of $G \quotient G_0$ on $M$ with defect $G_0$ given by $m \rightsemiaction g G_0 = g_{m_0, m} g g_{m_0, m}^{-1} \leftaction m$, where $G_0$ is the stabiliser of $m_0$ under $\leftaction$. Each of these right semi-actions is to the left group action what right multiplication is to left group multiplication. They occur in the definition of global transition functions of cellular automata over left homogeneous spaces as defined in \cite{wacker:automata:2016}. A \emph{cell space} is a left group set together with choices of $m_0$ and $\family{g_{m_0, m}}_{m \in M}$.

  A cell space is \emph{right amenable} if there is a finitely additive probability measure on $\powerset(M)$ that is semi-invariant under $\rightsemiaction$. For example cell spaces with finite sets of cells, abelian groups, and finitely right generated cell spaces with finite stabilisers of sub-exponential growth are right amenable, in particular, quotients of finitely generated groups of sub-exponential growth by finite subgroups acted on by left multiplication. A net of non-empty and finite subsets of $M$ is a \emph{right Følner net} if, broadly speaking, these subsets are asymptotically invariant under $\rightsemiaction$. A finite subset $E$ of $G \quotient G_0$ and two partitions $\family{A_e}_{e \in E}$ and $\family{B_e}_{e \in E}$ of $M$ constitute a \emph{right paradoxical decomposition} if the map $\blank \rightsemiaction e$ is injective on $A_e$ and $B_e$, and the family $\family{(A_e \rightsemiaction e) \cupdot (B_e \rightsemiaction e)}_{e \in E}$ is a partition of $M$. The Tarski-Følner theorem states that right amenability, the existence of right Følner nets, and the non-existence of right paradoxical decompositions are equivalent. We prove it in \cite{wacker:amenable:2016} for cell spaces with finite stabilisers.

  A cell space $\mathcal{R}$ is \emph{finitely right generated} if there is a finite subset $S$ of $G \quotient G_0$ such that, for each point $m \in M$, there is a family $\family{s_i}_{i \in \set{1,2,\dotsc,k}}$ of elements in $S \cup S^{-1}$ such that $m = (((m_0 \rightsemiaction s_1) \rightsemiaction s_2) \rightsemiaction \dotsb) \rightsemiaction s_k$. The finite right generating set $S$ induces the \emph{$S$-Cayley graph} structure on $M$: For each point $m \in M$ and each generator $s \in S$, there is an edge from $m$ to $m \rightsemiaction s$. The length of the shortest path between two points of $M$ yields the \emph{$S$-metric}. The ball of radius $\rho \in \N_0$ centred at $m \in M$, denoted by $\ball_S(m, \rho)$, is the set of all points whose distance to $m$ is less than or equal to $\rho$. The \emph{$S$-growth function} is the map $\gamma_S \from \N_0 \to \N_0$, $k \mapsto \abs{\ball_S(m, k)}$; the \emph{growth type of $\mathcal{R}$}, which does not depend on $S$, is the equivalence class $\equivclass{\gamma_S}_\equivalent$, where two growth functions are equivalent if they dominate each other; and the \emph{$S$-growth rate} is the limit point of the sequence $\sequence{\sqrt[k]{\gamma_S(k)}}_{k \in K}$.

  A finitely right generated cell space $\mathcal{R}$ is said to have \emph{sub-exponential growth} if its growth type is not $\equivclass{\exp}_\equivalent$, which is the case if and only if its growth rates are $1$. The \emph{$S$-isoperimetric constant} is a real number between $0$ and $1$ that measures, broadly speaking, the invariance under $\rightsemiaction\restriction_{M \times S}$ that a finite subset of $M$ can have, where $0$ means maximally and $1$ minimally invariant. In the case that $G_0$ is finite, this constant is $0$ if and only if $\mathcal{R}$ is right amenable, and if $\mathcal{R}$ has sub-exponential growth, then it is right amenable, and if $G$ has sub-exponential growth, then so has $\mathcal{R}$.

  Cayley graphs were introduced by Arthur Cayley in his paper \enquote{Desiderata and suggestions: No. 2. The Theory of groups: graphical representation}\cite{cayley:1878}. The notion of growth was introduced by Vadim Arsenyevich Efremovich and Albert S. Švarc in their papers \enquote{The geometry of proximity}\cite{efremovich:1952} and \enquote{A volume invariant of coverings}\cite{svarc:1955}. Mikhail Leonidovich Gromov was the first to study groups through their word metrics, see for example his paper \enquote{Infinite Groups as Geometric Objects}\cite{gromov:1984}. The present paper is greatly inspired by the monograph \enquote{Cellular Automata and Groups}\cite{ceccherini-silberstein:coornaert:2010} by Tullio Ceccherini-Silberstein and Michel Coornaert. 

  In \cref{sec:gen-set} we introduce right generating sets. In \cref{sec:multigraphs} we recapitulate directed multigraphs. In \cref{sec:cayley} we introduce Cayley graphs induced by right generating sets. In \cref{sec:metrics} we introduce metrics and lengths induced by Cayley graphs. In \cref{sec:balls-and-spheres} we consider balls and spheres induced by metrics. In \cref{sec:interiors-closures-boundaries} we consider interiors, closures, and boundaries of any thickness of sets. In \cref{sec:growth-functions-and-types} we recapitulate growth functions and types. In \cref{sec:cell-spaces-growth-fnc-and-types} we introduce growth functions and types of cell spaces. In \cref{sec:growth-rates} we introduce growth rates of cell spaces. In \cref{sec:folner-cond} we prove that right amenability and having isoperimetric constant $0$ are equivalent, and we characterise right Følner nets. And in \cref{sec:subexp-growth-amenability} we prove that having sub-exponential growth implies right amenability.

  \subsubsection*{Preliminary Notions.} A \define{left group set} is a triple $\ntuple{M, G, \leftaction}$, where $M$ is a set, $G$ is a group, and $\leftaction$ is a map from $G \times M$ to $M$, called \define{left group action of $G$ on $M$}, such that $G \to \Sym(M)$, $g \mapsto [g \leftaction \blank]$, is a group homomorphism. The action $\leftaction$ is \define{transitive} if $M$ is non-empty and for each $m \in M$ the map $\blank \leftaction m$ is surjective; and \define{free} if for each $m \in M$ the map $\blank \leftaction m$ is injective. For each $m \in M$, the set $G \leftaction m$ is the \define{orbit of $m$}, the set $G_m = (\blank \leftaction m)^{-1}(m)$ is the \define{stabiliser of $m$}, and, for each $m' \in M$, the set $G_{m, m'} = (\blank \leftaction m)^{-1}(m')$ is the \define{transporter of $m$ to $m'$}.

  A \define{left homogeneous space} is a left group set $\mathcal{M} = \ntuple{M, G, \leftaction}$ such that $\leftaction$ is transitive. A \define{coordinate system for $\mathcal{M}$} is a tuple $\mathcal{K} = \ntuple{m_0, \family{g_{m_0, m}}_{m \in M}}$, where $m_0 \in M$ and for each $m \in M$ we have $g_{m_0, m} \leftaction m_0 = m$. The stabiliser $G_{m_0}$ is denoted by $G_0$. The tuple $\mathcal{R} = \ntuple{\mathcal{M}, \mathcal{K}}$ is a \define{cell space}. The map $\rightsemiaction \from M \times G \quotient G_0 \to M$, $(m, g G_0) \mapsto g_{m_0, m} g g_{m_0, m}^{-1} \leftaction m\ (= g_{m_0, m} g \leftaction m_0)$ is a \define{right semi-action of $G \quotient G_0$ on $M$ with defect $G_0$}, which means that
  \begin{gather*}
    \ForEach m \in M \Holds m \rightsemiaction G_0 = m,\\
    \ForEach m \in M \ForEach g \in G \Exists g_0 \in G_0 \SuchThat \ForEach \mathfrak{g}' \in G \quotient G_0 \Holds
          m \rightsemiaction g \cdot \mathfrak{g}' = (m \rightsemiaction g G_0) \rightsemiaction g_0 \cdot \mathfrak{g}'.
  \end{gather*}
  It is \define{transitive}, which means that the set $M$ is non-empty and for each $m \in M$ the map $m \rightsemiaction \blank$ is surjective; and \define{free}, which means that for each $m \in M$ the map $m \rightsemiaction \blank$ is injective; and \define{semi-commutes with $\leftaction$}, which means that
  \begin{equation*}
    \ForEach m \in M \ForEach g \in G \Exists g_0 \in G_0 \SuchThat \ForEach \mathfrak{g}' \in G \quotient G_0 \Holds
          (g \leftaction m) \rightsemiaction \mathfrak{g}' = g \leftaction (m \rightsemiaction g_0 \cdot \mathfrak{g}').
  \end{equation*}
  The maps $\iota \from M \to G \quotient G_0$, $m \mapsto G_{m_0, m}$, and $m_0 \rightsemiaction \blank$ are inverse to each other. Under the identification of $M$ with $G \quotient G_0$ by either of these maps, we have $\rightsemiaction \from (m, \mathfrak{g}) \mapsto g_{m_0, m} \leftaction \mathfrak{g}$.

  A left homogeneous space $\mathcal{M}$ is \define{right amenable} if there is a coordinate system $\mathcal{K}$ for $\mathcal{M}$ and there is a finitely additive probability measure $\mu$ on $M$ such that 
  \begin{equation*}
    \ForEach \mathfrak{g} \in G \quotient G_0 \ForEach A \subseteq M \Holds \parens[\big]{(\blank \rightsemiaction \mathfrak{g})\restriction_A \text{ injective} \implies \mu(A \rightsemiaction \mathfrak{g}) = \mu(A)},
  \end{equation*}
  in which case the cell space $\mathcal{R} = \ntuple{\mathcal{M}, \mathcal{K}}$ is called \define{right amenable}. When the stabiliser $G_0$ is finite, that is the case if and only if there is a \define{right Følner net in $\mathcal{R}$ indexed by $(I, \leq)$}, which is a net $\net{F_i}_{i \in I}$ in $\set{F \subseteq M \suchthat F \neq \emptyset, F \text{ finite}}$ such that
  \begin{equation*}
    \ForEach \mathfrak{g} \in G \quotient G_0 \Holds \lim_{i \in I} \frac{\abs{F_i \smallsetminus (\blank \rightsemiaction \mathfrak{g})^{-1}(F_i)}}{\abs{F_i}} = 0.
  \end{equation*}

  \section{Right Generating Sets}
  \label{sec:gen-set}


  In this section, let $\mathcal{R} = \ntuple{\ntuple{M, G, \leftaction}, \ntuple{m_0, \family{g_{m_0, m}}_{m \in M}}}$ be a cell space. 

  In \cref{def:right-generating-set} we define right generating sets of $\mathcal{R}$. And in \cref{lem:gen-set-of-group-induces-right-gen-set-of-action} we show how generating sets of $G$ induce right ones of $\mathcal{R}$.

  \begin{definition}
  \label{def:right-generating-set}
    Let $S$ be a subset of $G \quotient G_0$ such that $G_0 \cdot S \subseteq S$.
    \begin{enumerate}
      \item The set $\set{g^{-1} G_0 \suchthat s \in S, g \in s}$ is denoted by $S^{-1}$.
      \item The set $S$ is said to \define{right generate $\mathcal{R}$}\graffito{$S$ right generates $\mathcal{R}$}, called \define{right generating set of $\mathcal{R}$}\graffito{right generating set $S$ of $\mathcal{R}$}, and each element $s \in S$ is called \define{right generator}\graffito{right generator $s$} if and only if, for each element $m \in M$, there is a non-negative integer $k \in \N_0$ and there is a family $\family{s_i}_{i \in \{1,2,\dotsc,k\}}$ of elements in $S \cup S^{-1}$ such that
            \begin{equation*}
              \Big(\big((m_0 \rightsemiaction s_1) \rightsemiaction s_2\big) \rightsemiaction \dotsb\Big) \rightsemiaction s_k = m. 
            \end{equation*}
      \item The set $S$ is called \define{symmetric}\graffito{symmetric} if and only if $S^{-1} \subseteq S$.
    \end{enumerate}
  \end{definition}



  \begin{definition}
    The cell space $\mathcal{R}$ is called \define{finitely right generated}\graffito{finitely right generated cell space $\mathcal{R}$} if and only if there is a right generating set of $\mathcal{R}$ that is finite. 
  \end{definition}


  \begin{remark}
    If $S$ is a right generating set of $\mathcal{R}$, then $S \cup S^{-1}$ is a symmetric one; and, if $S$ is also finite and $G_0$ is finite, then $S \cup S^{-1}$ is finite. 
  \end{remark}

  \begin{lemma}
  \label{lem:gen-set-of-group-induces-right-gen-set-of-action}
    Let $T$ be a generating set of $G$. The set $S = \set{g_0 \cdot t G_0 \suchthat g_0 \in G_0, t \in T}$ is a right generating set of $\mathcal{R}$. And, if $T$ is symmetric, then so is $S$. And, if $T$ and $G_0$ are finite, then so is $S$. 
  \end{lemma}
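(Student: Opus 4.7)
The plan is to verify in turn (a) that $S$ is a well-defined subset of $G \quotient G_0$ (i.e.\ $G_0 \cdot S \subseteq S$), (b) that $S$ right generates $\mathcal{R}$, and (c) the symmetry and finiteness claims. Step (a) is immediate since $g_0' \cdot g_0 t G_0 = (g_0' g_0) t G_0 \in S$. Before tackling (b), I would compute $S^{-1}$ explicitly: each $g \in g_0 t G_0$ has the form $g_0 t g_0'$ with $g_0' \in G_0$, so $g^{-1} G_0 = (g_0')^{-1} t^{-1} g_0^{-1} G_0 = (g_0')^{-1} t^{-1} G_0$, whence
\[
  S^{-1} = \set[\big]{h_0 \cdot t^{-1} G_0 \suchthat h_0 \in G_0,\ t \in T}.
\]

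The heart of the argument is (b). Given $m \in M$, transitivity of $\leftaction$ furnishes $g \in G$ with $g \leftaction m_0 = m$, and, since $T$ generates $G$, there is a factorisation $g = t_1 t_2 \cdots t_k$ with $t_i \in T \cup T^{-1}$. Put $m_i := (t_1 \cdots t_i) \leftaction m_0$ for $i = 0, 1, \dotsc, k$, so that $m_k = m$ and $\family{m_i}_{i = 0, \dotsc, k}$ is a path from the base point to $m$. I would construct $\mathfrak{s}_i \in S \cup S^{-1}$ inductively so that $m_{i-1} \rightsemiaction \mathfrak{s}_i = m_i$. Because $g_{m_0, m_{i-1}} \leftaction m_0 = m_{i-1} = (t_1 \cdots t_{i-1}) \leftaction m_0$, the element $h_0 := g_{m_0, m_{i-1}}^{-1} \cdot t_1 \cdots t_{i-1}$ lies in $G_0$. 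Setting $\mathfrak{s}_i := h_0 t_i G_0$ and using $m \rightsemiaction g' G_0 = g_{m_0, m} g' \leftaction m_0$, one computes
\[
  m_{i-1} \rightsemiaction \mathfrak{s}_i = g_{m_0, m_{i-1}} h_0 t_i \leftaction m_0 = (t_1 \cdots t_{i-1}) t_i \leftaction m_0 = m_i,
\]
and, by the explicit description of $S^{-1}$ obtained above, the coset $\mathfrak{s}_i$ lies in $S$ when $t_i \in T$ and in $S^{-1}$ when $t_i \in T^{-1}$.

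For (c), if $T^{-1} \subseteq T$ then $t^{-1} \in T$ for every $t \in T$, so the explicit form of $S^{-1}$ immediately gives $S^{-1} \subseteq S$; and the bound $\abs{S} \leq \abs{G_0} \cdot \abs{T}$ settles the finiteness statement. The only subtle point in the whole argument is the inductive step in (b): one must recognise why the correction factor $h_0 \in G_0$ is forced into the coset. This is precisely the reason the definition of $S$ was chosen to absorb the whole $G_0$-orbit $G_0 \cdot t G_0$ rather than merely the single coset $t G_0$; without this absorption, the intermediate cosets produced by walking along the path $\family{m_i}$ need not be available as right generators.
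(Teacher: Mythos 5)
Your proposal is correct and follows essentially the same route as the paper: decompose $g = t_1 \cdots t_k$ over $T \cup T^{-1}$, walk through the intermediate points, and absorb a $G_0$-correction factor into each generator so that every step lies in $S \cup S^{-1}$. The only cosmetic difference is that you compute the correction factor $h_0 = g_{m_0, m_{i-1}}^{-1} t_1 \cdots t_{i-1}$ explicitly from the formula for $\rightsemiaction$, whereas the paper obtains the factors $g_{i,0}$ abstractly from the right semi-action axiom.
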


  \begin{proof}
    Let $m \in M$. Then, because $\rightsemiaction$ is transitive, there is a $g \in G$ such that $m_0 \rightsemiaction g G_0 = m$. Moreover, there is a $k \in \N_0$ and there is a $\family{t_i}_{i \in \set{1,2,\dotsc,k}} \subseteq T \cup T^{-1}$ such that $t_1 t_2 \dotsb t_k = g$. Furthermore, there is a $\family{g_{i,0}}_{i \in \set{2,3,\dotsc,k}} \subseteq G_0$ such that 
    \begin{equation*}
      \Big(\big((m_0 \rightsemiaction t_1 G_0) \rightsemiaction g_{2,0} t_2 G_0\big) \rightsemiaction \dotsb\Big) \rightsemiaction g_{k,0} t_k G_0
      = m_0 \rightsemiaction t_1 t_2 \dotsb t_k G_0
      = m.
    \end{equation*}
    In conclusion, because $t_1 G_0 \in S \cup S^{-1}$ and $\family{g_{i,0} t_i G_0}_{i \in \set{2,3,\dotsc,k}} \subseteq S \cup S^{-1}$, the set $S$ is a right generating set of $\mathcal{R}$.

    Let $T$ be symmetric. Furthermore, let $s \in S$ and let $g \in s$. Then, there is a $g_0 \in G_0$, there is a $t \in T$, and there is a $g_0' \in G_0$ such that $g_0 \cdot t G_0 = s$ and $g_0 t g_0' = g$. Hence, because $(g_0')^{-1} \in G_0$ and $t^{-1} \in T$,
    \begin{equation*}
      g^{-1} G_0 = (g_0')^{-1} t^{-1} g_0^{-1} G_0 = (g_0')^{-1} \cdot t^{-1} G_0 \in S.
    \end{equation*}
    In conclusion, $S^{-1} \subseteq S$.

    If $T$ and $G_0$ are finite, then so is $S$. 
  \end{proof}

  \section{Directed Multigraphs} 
  \label{sec:multigraphs}

  \begin{definition} 
    Let $V$ and $E$ be two sets, and let $\sigma$ and $\tau$ be two maps from $E$ to $V$. The quadruple $\mathcal{G} = \ntuple{V, E, \sigma, \tau}$ is called \define{directed multigraph}\graffito{directed multigraph $\mathcal{G}$}; each element $v \in V$ is called \define{vertex}\graffito{vertex $v$}; each element $e \in E$ is called \define{edge from $\sigma(e)$ to $\tau(e)$}\graffito{edge $e$ from $\sigma(e)$ to $\tau(e)$}; for each element $e \in E$, the vertex $\sigma(e)$ is called \define{source of $e$}\graffito{source $\sigma(e)$ of $e$} and the vertex $\tau(e)$ is called \define{target of $e$}\graffito{target $\tau(e)$ of $e$}. 
  \end{definition}

  \begin{definition}
    Let $\mathcal{G} = \ntuple{V, E, \sigma, \tau}$ be a directed multigraph and let $e$ be an edge of $\mathcal{G}$. The edge $e$ is called \define{loop}\graffito{loop $e$} if and only if $\tau(e) = \sigma(e)$.
  \end{definition}

  \begin{definition}
    Let $\mathcal{G} = \ntuple{V, E, \sigma, \tau}$ be a directed multigraph and let $v$ be a vertex of $\mathcal{G}$.
    \begin{enumerate}
      \item The cardinal number
            \begin{equation*}
              \vdegree^+(v) = \abs{\set{e \in E \suchthat \sigma(e) = v}} \mathnote{out-degree $\vdegree^+(v)$ of $v$}
            \end{equation*}
            is called \define{out-degree of $v$}.
      \item The cardinal number
            \begin{equation*}
              \vdegree^-(v) = \abs{\set{e \in E \suchthat \tau(e) = v}} \mathnote{in-degree $\vdegree^-(v)$ of $v$}
            \end{equation*}
            is called \define{in-degree of $v$}.
      \item The cardinal number
            \begin{equation*}
              \vdegree(v) = \vdegree^+(v) + \vdegree^-(v)
            \end{equation*}
            is called \define{degree of $v$}\graffito{degree $\vdegree(v)$ of $v$}. \qedhere 
    \end{enumerate}
  \end{definition}

  \begin{definition}
    Let $\mathcal{G}$ be a directed multigraph, and let $v$ and $v'$ be two vertices of $\mathcal{G}$. The vertices $v$ and $v'$ are called \define{adjacent}\graffito{adjacent vertices $v$ and $v'$} if and only if there is an edge from $v$ to $v'$ or one from $v'$ to $v$.
  \end{definition}

  \begin{definition} 
    Let $\mathcal{G} = \ntuple{V, E, \sigma, \tau}$ be a directed multigraph and let $p = \sequence{e_i}_{i \in \set{1,2,\dotsc,k}}$ be a finite sequence of edges of $\mathcal{G}$. The sequence $p$ is called \define{path from $\sigma(e_1)$ to $\tau(e_k)$}\graffito{path $p$ from $\sigma(e_1)$ to $\tau(e_k)$} if and only if, for each index $i \in \set{1,2,\dotsc,k-1}$, we have $\tau(e_i) = \sigma(e_{i + 1})$.
  \end{definition}

  \begin{definition}
    Let $\mathcal{G}$ be a directed multigraph and let $p = \sequence{e_i}_{i \in \set{1,2,\dotsc,k}}$ be a path in $\mathcal{G}$. The number $\abs{p} = k$ is called \define{length of $p$}\graffito{length $\abs{p}$ of $p$}.
  \end{definition}

  \begin{definition}
    Let $\mathcal{G} = \ntuple{V, E, \sigma, \tau}$ be a directed multigraph. It is called
    \begin{enumerate}
      \item \define{symmetric}\graffito{symmetric} if and only if, for each edge $e \in E$, there is an edge $e' \in E$ such that $\sigma(e') = \tau(e)$ and $\tau(e') = \sigma(e)$; 
      \item \define{strongly connected}\graffito{strongly connected} if and only if, for each vertex $v \in V$ and each vertex $v' \in V$, there is a path $p$ from $v$ to $v'$; 
      \item \define{regular}\graffito{regular} if and only if all vertices of $\mathcal{G}$ have the same degree and, for each vertex $v \in V$, we have $\vdegree^-(v) = \vdegree^+(v)$. \qedhere 
    \end{enumerate}
  \end{definition}

  \begin{definition} 
    Let $\mathcal{G} = \ntuple{V, E, \sigma, \tau}$ be a directed multigraph, let $W$ be a subset of $V$, let $F$ be the set $\set{e \in E \suchthat \sigma(e), \tau(e) \in W}$, let $\varsigma$ be the map $\sigma\restriction_{F \to W}$, and let $\upsilon$ be the map $\tau\restriction_{F \to W}$. The subgraph $\mathcal{G}[W] = \ntuple{W, F, \varsigma, \upsilon}$ of $\mathcal{G}$ is called \define{induced by $W$}\graffito{subgraph $\mathcal{G}[W]$ of $\mathcal{G}$ induced by $W$}. 
  \end{definition}

  \begin{definition} 
    Let $\mathcal{G} = \ntuple{V, E, \sigma, \tau}$ be a symmetric and strongly connected directed multigraph. The map
    \begin{align*}
      d \from V \times V &\to     \N_0, \mathnote{distance $d$ on $\mathcal{G}$}\\
                 (v, v') &\mapsto \min\set{\abs{p} \suchthat \text{$p$ path from $v$ to $v'$}},
    \end{align*}
    is a metric on $V$ and called \define{distance on $\mathcal{G}$}.
  \end{definition}

  \begin{definition} 
    Let $\ntuple{V, E, \sigma, \tau}$ be a directed multigraph, let $\Lambda$ be a set, and let $\lambda$ be a map from $E$ to $\Lambda$. The quintuple $\mathcal{G} = \ntuple{V, E, \sigma, \tau, \lambda}$ is called \define{$\Lambda$-edge-labelled directed multigraph}\graffito{$\Lambda$-edge-labelled directed multigraph $\mathcal{G}$}.
  \end{definition}

  \section{Cayley Graphs}
  \label{sec:cayley}

  In this section, let $\mathcal{R} = \ntuple{\ntuple{M, G, \leftaction}, \ntuple{m_0, \family{g_{m_0, m}}_{m \in M}}}$ be a cell space and let $S$ be a right generating set of $\mathcal{R}$. 

  \begin{definition} 
    Let $E$ be the set $\set{(m, s, m \rightsemiaction s) \suchthat m \in M, s \in S}$, and let $\sigma \colon E \to M$, $\lambda \colon E \to S$, and $\tau \colon E \to M$ be the projections to the first, second, and third component respectively. The $S$-edge-labelled directed multigraph $\mathcal{G} = \ntuple{M, E, \sigma, \tau, \lambda}$ is called \define{$S$-Cayley graph of $\mathcal{R}$}\graffito{$S$-Cayley graph $\mathcal{G}$ of $\mathcal{R}$}.
  \end{definition} 

  \begin{remark}
    Let $\mathcal{G}$ be the $S$-Cayley graph of $\mathcal{R}$.
    \begin{enumerate}
      \item If $S$ is symmetric, then $\mathcal{G}$ is symmetric and strongly connected. 
      \item The following statements are equivalent:
            \begin{enumerate}
              \item $G_0 \in S$;
              \item At least one vertex of $\mathcal{G}$ has a loop;
              \item All vertices of $\mathcal{G}$ have a loop.
            \end{enumerate}
      \item Because $\rightsemiaction$ is free, there are no multiple edges in $\mathcal{G}$. \qedhere
    \end{enumerate}
  \end{remark}

  \begin{remark}
    Let $\mathcal{G}$ be the $S$-Cayley graph of $\mathcal{R}$, and let $m$ and $m'$ be two vertices of $\mathcal{G}$. The vertices $m$ and $m'$ are adjacent if and only if there is an element $s \in S$ such that $m \rightsemiaction s = m'$. 
  \end{remark}

  \begin{remark}
    Let $\mathcal{G}$ be the $S$-Cayley graph of $\mathcal{R}$ and let $m$ be a vertex of $\mathcal{G}$. The map
    \begin{align*}
      S &\to     m \rightsemiaction S,\\
      s &\mapsto m \rightsemiaction s,
    \end{align*}
    is a bijection onto the out-neighbourhood of $m$. It is injective, because $\rightsemiaction$ is free, and it is surjective, by definition. Therefore, if $S$ is symmetric, then the degree of $m$ is $2 \abs{S}$ in cardinal arithmetic and the graph $\mathcal{G}$ is regular. 
  \end{remark}

  \section{Metrics and Lengths}
  \label{sec:metrics}

  In this section, let $\mathcal{R} = \ntuple{\ntuple{M, G, \leftaction}, \ntuple{m_0, \family{g_{m_0, m}}_{m \in M}}}$ be a cell space and let $S$ be a symmetric right generating set of $\mathcal{R}$.

  In \cref{def:metric,def:length} we define the $S$-metric $d_S$ and the $S$-length $\abs{\blank}_S$ on $\mathcal{R}$ induced by the $S$-Cayley graph. And in \cref{lem:metric-and-liberation,lem:metric-invariant-under-left-action} we show how the $S$-metric relates to the left group action $\leftaction$ and the right quotient set semi-action $\rightsemiaction$.


  \begin{definition} 
  \label{def:metric}
    The distance on the $S$-Cayley graph of $\mathcal{R}$ is called \define{$S$-metric on $\mathcal{R}$} and denoted by $d_S$.
  \end{definition} 

  \begin{remark}
    The $S$-metric on $\mathcal{R}$ is
    \begin{align*} 
      d_S \from M \times M &\to     \N_0, \mathnote{$S$-metric $d_S$ on $\mathcal{R}$}\\ 
                   (m, m') &\mapsto \min\{k \in \N_0 \suchthat{} \begin{aligned}[t]
                                                                          &\Exists \family{s_i}_{i \in \{1,2,\dotsc,k\}} \subseteq S \SuchThat\\ 
                                                                          &(((m \rightsemiaction s_1) \rightsemiaction s_2) \rightsemiaction \dotsb) \rightsemiaction s_k = m'\}.
                                                                        \end{aligned}
    \end{align*}
  \end{remark}


  \begin{lemma}
  \label{lem:metric-and-liberation}
    Let $m$ and $m'$ be two elements of $M$, and let $s$ be an element of $S$. Then, $d_S(m, m' \rightsemiaction s) \leq d_S(m, m') + 1$.
  \end{lemma}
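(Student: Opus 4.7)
The plan is to use the characterization of $d_S$ in terms of sequences of generators: $d_S(m, m')$ is the smallest $k$ for which there exists a family $\family{s_i}_{i \in \{1,\ldots,k\}} \subseteq S$ realizing $m'$ from $m$ by successive applications of $\rightsemiaction$. So I would let $k = d_S(m, m')$ and pick such a family $\family{s_i}_{i \in \{1,\ldots,k\}} \subseteq S$, and then simply append the given generator $s$ to obtain a family $\family{s_i}_{i \in \{1,\ldots,k+1\}} \subseteq S$ with $s_{k+1} = s$.

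By associativity of the nested $\rightsemiaction$-notation, applying this extended family to $m$ yields $(((\cdots(m \rightsemiaction s_1) \rightsemiaction \cdots) \rightsemiaction s_k) \rightsemiaction s = m' \rightsemiaction s$, exhibiting a sequence of length $k+1$ from $m$ to $m' \rightsemiaction s$. Since $d_S(m, m' \rightsemiaction s)$ is defined as the minimum such length, we conclude $d_S(m, m' \rightsemiaction s) \leq k + 1 = d_S(m, m') + 1$.

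There is essentially no obstacle: the statement is the triangle-inequality-type consequence of the fact that $m'$ and $m' \rightsemiaction s$ are adjacent in the $S$-Cayley graph (via the edge labelled $s$), together with the triangle inequality for the graph distance $d_S$. One could alternatively phrase the argument graph-theoretically: concatenate any shortest path from $m$ to $m'$ with the single edge $(m', s, m' \rightsemiaction s) \in E$ to get a path of length $d_S(m, m') + 1$ from $m$ to $m' \rightsemiaction s$. The only thing to be careful about is that the appended element $s$ is indeed in $S$ (given) and that the concatenation is still a valid path in the sense of the definition, which holds because the target of the last edge of the shortest path is $m'$, matching the source of the appended edge.
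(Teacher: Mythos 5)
Your argument is correct and is essentially identical to the paper's proof: both take a minimal family $\family{s_i}_{i \in \set{1,\dotsc,k}} \subseteq S$ with $k = d_S(m, m')$ realising $m'$ from $m$ and append $s$ to obtain a realisation of $m' \rightsemiaction s$ of length $k + 1$. Nothing is missing.
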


  \begin{proof}
    Let $k = d_S(m, m')$. Then, there is a $\family{s_i}_{i \in \{1,2,\dotsc,k\}} \subseteq S$ such that $(((m \rightsemiaction s_1) \rightsemiaction s_2) \rightsemiaction \dotsb) \rightsemiaction s_k = m'$. Hence, $((((m \rightsemiaction s_1) \rightsemiaction s_2) \rightsemiaction \dotsb) \rightsemiaction s_k) \rightsemiaction s = m' \rightsemiaction s$. Therefore, $d_S(m, m' \rightsemiaction s) \leq d_S(m, m') + 1$.
  \end{proof}

  \begin{lemma} 
  \label{lem:metric-invariant-under-left-action}
    Let $m$ and $m'$ be two elements of $M$, and let $g$ be an element of $G$. Then, $d_S(g \leftaction m, g \leftaction m') = d_S(m, m')$.
  \end{lemma}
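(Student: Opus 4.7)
\begin{proof-idea}
The plan is to establish $d_S(g \leftaction m, g \leftaction m') \leq d_S(m, m')$ first and then apply the same inequality with $g^{-1}$ in place of $g$ to obtain the reverse bound. Fix a shortest path witnessing $k = d_S(m, m')$, namely a family $\family{s_i}_{i \in \set{1,2,\dotsc,k}} \subseteq S$ together with intermediate vertices $m_0 = m$ and $m_{i+1} = m_i \rightsemiaction s_{i+1}$, so that $m_k = m'$. The aim is to translate this path, step by step, into a path of the same length from $g \leftaction m$ to $g \leftaction m'$.

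The central tool is the semi-commutation of $\rightsemiaction$ with $\leftaction$ recorded in the preliminary notions: for each $i$, there is an element $g_{i,0} \in G_0$ (depending on $m_i$ and $g$) such that $(g \leftaction m_i) \rightsemiaction \mathfrak{g}' = g \leftaction (m_i \rightsemiaction g_{i,0} \cdot \mathfrak{g}')$ for every $\mathfrak{g}' \in G \quotient G_0$. Choosing $\mathfrak{g}' = g_{i,0}^{-1} \cdot s_{i+1}$ gives $(g \leftaction m_i) \rightsemiaction (g_{i,0}^{-1} \cdot s_{i+1}) = g \leftaction m_{i+1}$. Because $S$ satisfies $G_0 \cdot S \subseteq S$ by the definition of a right generating set and $g_{i,0}^{-1} \in G_0$, the new generator $s_{i+1}' = g_{i,0}^{-1} \cdot s_{i+1}$ still lies in $S$, so the translated walk stays within $S$.

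Iterating this construction for $i = 0, 1, \dotsc, k-1$ produces a family $\family{s_i'}_{i \in \set{1,2,\dotsc,k}} \subseteq S$ with $(\dotsb((g \leftaction m) \rightsemiaction s_1') \rightsemiaction \dotsb) \rightsemiaction s_k' = g \leftaction m'$, hence $d_S(g \leftaction m, g \leftaction m') \leq k$. Applying the same argument with $g^{-1}$ in place of $g$ and $g \leftaction m$, $g \leftaction m'$ in place of $m$, $m'$ yields $d_S(m, m') \leq d_S(g \leftaction m, g \leftaction m')$, and equality follows.

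The main obstacle is the bookkeeping: the correction element $g_{i,0}$ supplied by semi-commutation depends on the current vertex $m_i$, so a genuinely new replacement $s_{i+1}'$ must be chosen at every step, and one has to track how the intermediate vertices transform along the way. The absorption condition $G_0 \cdot S \subseteq S$ baked into the definition of a right generating set is precisely what guarantees these replacements remain inside $S$, which is what makes the induction close.
\end{proof-idea}
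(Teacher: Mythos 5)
Your proposal is correct and follows essentially the same route as the paper: both transport a geodesic via the semi-commutation of $\rightsemiaction$ with $\leftaction$, use the absorption property $G_0 \cdot S \subseteq S$ to keep the corrected generators inside $S$, and then substitute $g^{-1}$ to obtain the reverse inequality. The only (immaterial) difference is direction: you push a shortest path for $(m, m')$ forward to $(g \leftaction m, g \leftaction m')$ using generators $g_{i,0}^{-1} \cdot s_{i+1}$, whereas the paper pulls a shortest path for $(g \leftaction m, g \leftaction m')$ back to $(m, m')$ using generators $g_{i,0} \cdot s_i$, which avoids the extra inversion.
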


  \begin{proof}
    Let $k = d_S(g \leftaction m, g \leftaction m')$. Then, there is a $\family{s_i}_{i \in \{1,2,\dotsc,k\}} \subseteq S$ such that $((((g \leftaction m) \rightsemiaction s_1) \rightsemiaction s_2) \rightsemiaction \dotsb) \rightsemiaction s_k = g \leftaction m'$. Moreover, because $\leftaction$ and $\rightsemiaction$ semi-commute, for each $i \in \set{1,2,\dotsc,k}$, there is a $g_{i,0} \in G_0$, such that
    \begin{align*}
      &  \bigg(\Big(\big((g \leftaction m) \rightsemiaction s_1\big) \rightsemiaction s_2\Big) \rightsemiaction \dotsb\bigg) \rightsemiaction s_k\\
      &= \bigg(\Big(\big(g \leftaction (m \rightsemiaction g_{1,0} \cdot s_1)\big) \rightsemiaction s_2\Big) \rightsemiaction \dotsb\bigg) \rightsemiaction s_k\\
      &= \dotsb\\
      &= g \leftaction \bigg(\Big(\big((m \rightsemiaction g_{1,0} \cdot s_1) \rightsemiaction g_{2,0} \cdot s_2\big) \rightsemiaction \dotsb\Big) \rightsemiaction g_{k,0} \cdot s_k\bigg).
    \end{align*}
    Hence, $(((m \rightsemiaction g_{1,0} \cdot s_1) \rightsemiaction g_{2,0} \cdot s_2) \rightsemiaction \dotsb) \rightsemiaction g_{k,0} \cdot s_k = m'$. Therefore, $d_S(m, m') \leq k = d_S(g \leftaction m, g \leftaction m')$.

    Taking $g \leftaction m$ for $m$, $g \leftaction m'$ for $m'$, and $g^{-1}$ for $g$ yields $d_S(g \leftaction m, g \leftaction m') \leq d_S(g^{-1} \leftaction (g \leftaction m), g^{-1} \leftaction (g \leftaction m')) = d_S(m, m')$. 
    In conclusion, $d_S(g \leftaction m, g \leftaction m') = d_S(m, m')$.
  \end{proof}

  \begin{lemma}
  \label{lem:truncated-minmal-path-yields-minimal-path}
    Let $m$ and $m'$ be two elements of $M$, let $\family{s_i}_{i \in \set{1, 2, \dotsc, d_S(m, m')}}$ be a family of elements in $S$ such that $m' = (((m \rightsemiaction s_1) \rightsemiaction s_2) \rightsemiaction \dotsb) \rightsemiaction s_{d_S(m, m')}$, let $i$ be an element of $\set{0, 1, 2, \dotsc, d_S(m, m')}$, and let $m_i = (((m \rightsemiaction s_1) \rightsemiaction s_2) \rightsemiaction \dotsb) \rightsemiaction s_i$ Then, $d_S(m, m_i) = i$.
  \end{lemma}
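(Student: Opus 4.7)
The plan is to prove the two inequalities $d_S(m, m_i) \leq i$ and $d_S(m, m_i) \geq i$ separately, using the triangle inequality on the metric $d_S$ (which is a genuine metric by the preceding definition of the distance on a symmetric strongly connected graph, and $S$ is symmetric in this section).

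For the upper bound, the prefix $\sequence{s_j}_{j \in \set{1, 2, \dotsc, i}}$ of the given family witnesses a path of length $i$ from $m$ to $m_i$ in the $S$-Cayley graph, so $d_S(m, m_i) \leq i$ directly from the definition of $d_S$.

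For the lower bound, the suffix $\sequence{s_j}_{j \in \set{i+1, i+2, \dotsc, d_S(m, m')}}$ similarly exhibits a path of length $d_S(m, m') - i$ from $m_i$ to $m'$, giving $d_S(m_i, m') \leq d_S(m, m') - i$. Applying the triangle inequality yields
\begin{equation*}
  d_S(m, m') \leq d_S(m, m_i) + d_S(m_i, m') \leq d_S(m, m_i) + d_S(m, m') - i,
\end{equation*}
hence $d_S(m, m_i) \geq i$. Combining the two bounds gives the claim.

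There is no real obstacle here; the only subtlety is to make sure the suffix of a geodesic is itself a path in the Cayley graph, but this is immediate from the definition of $m_i$ together with the definition of edges in the $S$-Cayley graph (an edge from $n$ to $n \rightsemiaction s$ for each $s \in S$). No additional properties of $\leftaction$ or $\rightsemiaction$ are needed beyond those already used to define $d_S$.
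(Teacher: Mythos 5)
Your proof is correct and follows exactly the paper's argument: the prefix gives $d_S(m, m_i) \leq i$, the suffix gives $d_S(m_i, m') \leq d_S(m, m') - i$, and the triangle inequality yields the reverse bound $d_S(m, m_i) \geq i$. No differences worth noting.
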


  \begin{proof}
    By definition of $m_i$, we have $d_S(m, m_i) \leq i$ and $d_S(m_i, m') \leq d_S(m, m') - i$. Therefore, because $d_S(m, m') \leq d_S(m, m_i) + d_S(m_i, m')$, we have $d_S(m, m_i) \geq d_S(m, m') - d_S(m_i, m') \geq d_S(m, m') - (d_S(m, m') - i) = i$. In conclusion, $d_S(m, m_i) = i$.
  \end{proof}

  \begin{definition}
  \label{def:length}
    The map
    \begin{align*}
      \abs{\blank}_S \from M &\to     \N_0, \mathnote{$S$-length $\abs{\blank}_S$ on $\mathcal{R}$}\\ 
                           m &\mapsto d_S(m_0, m),
    \end{align*}
    is called \define{$S$-length on $\mathcal{R}$}. 
  \end{definition}

  \begin{remark}
    For each element $m \in M$, we have $\abs{m}_S = 0$ if and only if $m = m_0$.
  \end{remark}

  \section{Balls and Spheres}
  \label{sec:balls-and-spheres}

  In this section, let $\mathcal{R} = \ntuple{\ntuple{M, G, \leftaction}, \ntuple{m_0, \family{g_{m_0, m}}_{m \in M}}}$ be a cell space and let $S$ be a symmetric right generating set of $\mathcal{R}$.

  In \cref{def:balls-and-spheres} we define balls $\ball_S$ and spheres $\sphere_S$ in the $S$-metric on $\mathcal{R}$. And in the lemmata and corollaries of this section we show how balls, spheres, the left group action $\leftaction$, the right quotient set semi-action $\rightsemiaction$, and the $S$-metric relate to each other.

  \begin{definition} 
  \label{def:balls-and-spheres}
    Let $m$ be an element of $M$ and let $\rho$ be a non-negative integer.
    \begin{enumerate}
      \item The set
            \begin{equation*}
              \ball_S(m, \rho) = \set{m' \in M \suchthat d_S(m, m') \leq \rho} \mathnote{ball $\ball_S(m, \rho)$ of radius $\rho$ centred at $m$}
            \end{equation*}
            is called \define{ball of radius $\rho$ centred at $m$}. The ball of radius $\rho$ centred at $m_0$ is denoted by $\ball_S(\rho)$\graffito{ball $\ball_S(\rho)$ of radius $\rho$ centred at $m_0$}. 
      \item The set
            \begin{equation*}
              \sphere_S(m, \rho) = \set{m' \in M \suchthat d_S(m, m') = \rho} \mathnote{sphere $\sphere_S(m, \rho)$ of radius $\rho$ centred at $m$}
            \end{equation*}
            is called \define{sphere of radius $\rho$ centred at $m$}. The sphere of radius $\rho$ centred at $m_0$ is denoted by $\sphere_S(\rho)$\graffito{sphere $\sphere_S(\rho)$ of radius $\rho$ centred at $m_0$}. \qedhere 
    \end{enumerate}
  \end{definition}

  \begin{remark}
  \label{rem:spheres-expressed-in-terms-of-balls}
    For each element $m \in M$, we have $\sphere_S(m, 0) = \ball_S(m, 0)$, and, for each positive integer $\rho \in \N_+$, we have $\sphere_S(m, \rho) = \ball_S(m, \rho) \smallsetminus \ball_S(m, \rho - 1)$.
  \end{remark}

  \begin{remark}
    For each non-negative integer $\rho \in \N_0$,
    \begin{equation*}
      \ball_S(\rho) = \set{m \in M \suchthat \abs{m}_S \leq \rho}
    \end{equation*}
    and
    \begin{equation*}
      \sphere_S(\rho) = \set{m \in M \suchthat \abs{m}_S = \rho}. \qedhere
    \end{equation*}
  \end{remark}

  \begin{definition} 
    Let $\sequence{A_k}_{k \in \N_0}$ be a sequence of sets.
    \begin{enumerate}
      \item The set
            \begin{equation*}
              \liminf_{k \to \infty} A_k = \bigcup_{k \in \N_0} \bigcap_{\substack{j \in \N_0 \\ j \geq k}} A_j \mathnote{limit inferior $\liminf_{k \to \infty} A_k$ of $\sequence{A_k}_{k \in \N_0}$}
            \end{equation*}
            is called \define{limit inferior of $\sequence{A_k}_{k \in \N_0}$}.
      \item The set
            \begin{equation*}
              \limsup_{k \to \infty} A_k = \bigcap_{k \in \N_0} \bigcup_{\substack{j \in \N_0 \\ j \geq k}} A_j \mathnote{limit superior $\limsup_{k \to \infty} A_k$ of $\sequence{A_k}_{k \in \N_0}$}
            \end{equation*}
            is called \define{limit superior of $\sequence{A_k}_{k \in \N_0}$}.
      \item Let $A$ be a set. The sequence $\sequence{A_k}_{k \in \N_0}$ is said to \define{converge to $A$}\graffito{converge to $A$}, the set $A$ is called \define{limit set of $\sequence{A_k}_{k \in \N_0}$}\graffito{limit set $A$ of $\sequence{A_k}_{k \in \N_0}$}, and $A$ is denoted by $\lim_{k \to \infty} A_k$\graffito{$\lim_{k \to \infty} A_k$} if and only if $\liminf_{k \to \infty} A_k = \limsup_{k \to \infty} A_k = A$.
      \item The sequence $\sequence{A_k}_{k \in \N_0}$ is called \define{convergent}\graffito{convergent} if and only if $\liminf_{k \to \infty} A_k = \limsup_{k \to \infty} A_k$. \qedhere 
    \end{enumerate}
  \end{definition}

  \begin{lemma} 
    Let $\sequence{A_k}_{k \in \N_0}$ be a non-decreasing or non-increasing sequence of sets. It converges to $\bigcup_{k \in \N_0} A_k$ or $\bigcap_{k \in \N_0} A_k$ respectively. \qed
  \end{lemma}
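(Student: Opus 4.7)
The plan is to handle the two cases separately by directly evaluating the inner intersections and unions in the definitions of $\liminf$ and $\limsup$, exploiting monotonicity to collapse each of them.

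First I would treat the non-decreasing case, assuming $A_k \subseteq A_{k+1}$ for all $k \in \N_0$. For the limit inferior, monotonicity gives $\bigcap_{j \geq k} A_j = A_k$, since $A_k$ is the smallest member of that family; hence $\liminf_{k \to \infty} A_k = \bigcup_{k \in \N_0} A_k$. For the limit superior, monotonicity gives $\bigcup_{j \geq k} A_j = \bigcup_{j \in \N_0} A_j$, since discarding the first $k$ terms does not lose anything already captured later; hence $\limsup_{k \to \infty} A_k = \bigcap_{k \in \N_0} \bigcup_{j \in \N_0} A_j = \bigcup_{j \in \N_0} A_j$. Since both the limit inferior and the limit superior equal $\bigcup_{k \in \N_0} A_k$, the sequence converges to this union.

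Next I would treat the non-increasing case, assuming $A_{k+1} \subseteq A_k$ for all $k \in \N_0$, by the dual argument. Monotonicity yields $\bigcup_{j \geq k} A_j = A_k$, so $\limsup_{k \to \infty} A_k = \bigcap_{k \in \N_0} A_k$. It also yields $\bigcap_{j \geq k} A_j = \bigcap_{j \in \N_0} A_j$, so $\liminf_{k \to \infty} A_k = \bigcup_{k \in \N_0} \bigcap_{j \in \N_0} A_j = \bigcap_{j \in \N_0} A_j$. Both limits coincide with $\bigcap_{k \in \N_0} A_k$, so the sequence converges to this intersection.

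There is no real obstacle here; the argument is purely formal manipulation of nested unions and intersections. The only subtlety worth flagging is that in each case one should justify the collapse of an infinite intersection or union to a single term by a double inclusion: one inclusion is immediate from the term being a member of the family, and the converse inclusion is precisely where the monotonicity hypothesis is used. Everything else is symbolic bookkeeping.
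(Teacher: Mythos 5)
Your proof is correct; the paper itself omits the argument entirely (the lemma is stated with an immediate \qed), and your direct evaluation of the limit inferior and limit superior via the monotone collapse of the inner intersections and unions is exactly the standard computation the paper is implicitly relying on.
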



  \begin{remark}
  \label{rem:ball-of-radius-0-contains-one-element-and-sequence-of-balls-is-monotonic}
    For each element $m \in M$, we have $\ball_S(m, 0) = \set{m}$, and the sequence $\sequence{\ball_S(m, \rho)}_{\rho \in \N_0}$ is non-decreasing with respect to inclusion and converges to $M$, and hence, for each non-negative integer $\rho$, 
    \begin{equation*}
      \bigcup_{\substack{\rho' \in \N_0 \\ \rho' \geq \rho}} \ball_S(m, \rho') = M.
    \end{equation*}
  \end{remark}

  \begin{remark} 
  \label{rem:upper-bound-for-cardinality-of-balls}
    For each element $m \in M$ and each non-negative integer $\rho \in \N_0$, in cardinal arithmetic,
    \begin{equation*}
      \abs{\ball_S(m, \rho)} \leq (1 + \abs{S})^\rho,
    \end{equation*}
    because the map
    \begin{align*}
         (\set{G_0} \cup S)^\rho &\to     \ball_S(m, \rho),\\
      (s_1, s_2, \dotsc, s_\rho) &\mapsto (((m \rightsemiaction s_1) \rightsemiaction s_2) \rightsemiaction \dotsb) \rightsemiaction s_\rho,
    \end{align*}
    is surjective and $\abs{\set{G_0} \cup S}^\rho \leq (1 + \abs{S})^\rho$.
  \end{remark}

  \begin{lemma}
  \label{lem:ball-liberation-included-in-ball-one-larger}
    Let $m$ be an element of $M$, let $\rho$ be a non-negative integer, and let $s$ be an element of $S$. Then, $\ball_S(m, \rho) \rightsemiaction s \subseteq \ball_S(m, \rho + 1)$.
  \end{lemma}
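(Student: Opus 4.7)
The plan is to unwind the definition of $\ball_S(m, \rho) \rightsemiaction s$ and then apply \cref{lem:metric-and-liberation} pointwise. Concretely, I would pick an arbitrary element $n \in \ball_S(m, \rho) \rightsemiaction s$, write it as $n = m' \rightsemiaction s$ for some $m' \in \ball_S(m, \rho)$, and verify that $n \in \ball_S(m, \rho+1)$ by bounding $d_S(m, n)$.

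The key estimate is immediate: by the definition of $\ball_S(m, \rho)$ we have $d_S(m, m') \leq \rho$, and \cref{lem:metric-and-liberation} yields
\begin{equation*}
  d_S(m, n) = d_S(m, m' \rightsemiaction s) \leq d_S(m, m') + 1 \leq \rho + 1,
\end{equation*}
so $n$ lies in $\ball_S(m, \rho + 1)$ by the definition of that ball. Since $n$ was arbitrary, the desired inclusion follows.

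There is essentially no obstacle here: all the work is already done by \cref{lem:metric-and-liberation}, which turns the \enquote{one extra $\rightsemiaction s$ step} into \enquote{one extra unit of distance}. The only minor care needed is the bookkeeping between the set-level operation $\ball_S(m, \rho) \rightsemiaction s = \set{m' \rightsemiaction s \suchthat m' \in \ball_S(m, \rho)}$ and its elementwise description, but this is purely notational.
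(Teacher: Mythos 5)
Your proof is correct and is essentially identical to the paper's own argument: both take an arbitrary element of $\ball_S(m, \rho) \rightsemiaction s$, write it as $m' \rightsemiaction s$ for some $m'$ in the ball, and apply \cref{lem:metric-and-liberation} to bound the distance by $\rho + 1$. Nothing further is needed.
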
 

  \begin{proof}
    Let $m' \in \ball_S(m, \rho) \rightsemiaction s$. Then, there is an $m'' \in \ball_S(m, \rho)$ such that $m'' \rightsemiaction s = m'$. Hence, according to \cref{lem:metric-and-liberation}, we have $d_S(m, m') = d_S(m, m'' \rightsemiaction s) \leq d_S(m, m'') + 1 \leq \rho + 1$. Therefore, $m' \in \ball_S(m, \rho + 1)$. In conclusion, $\ball_S(m, \rho) \rightsemiaction s \subseteq \ball_S(m, \rho + 1)$.
  \end{proof}

  %

  \begin{lemma}
  \label{lem:left-action-and-balls}
    Let $m$ be an element of $M$, let $\rho$ be a non-negative integer, and let $g$ be an element of $G$. Then, $g \leftaction \ball_S(m, \rho) = \ball_S(g \leftaction m, \rho)$.
  \end{lemma}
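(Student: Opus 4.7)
The plan is to prove equality of the two sets via mutual inclusion, with Lemma~\ref{lem:metric-invariant-under-left-action} doing essentially all the work. Since $\leftaction$ restricted to $\{g\} \times M$ is a bijection on $M$ with inverse induced by $g^{-1}$, the two inclusions correspond to applying the invariance of $d_S$ in the two directions.

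For the inclusion $g \leftaction \ball_S(m, \rho) \subseteq \ball_S(g \leftaction m, \rho)$, I would take an arbitrary element of the left-hand side, write it as $g \leftaction m'$ with $m' \in \ball_S(m, \rho)$, and apply Lemma~\ref{lem:metric-invariant-under-left-action} to conclude $d_S(g \leftaction m, g \leftaction m') = d_S(m, m') \leq \rho$, so that $g \leftaction m' \in \ball_S(g \leftaction m, \rho)$.

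For the reverse inclusion, I would take $m' \in \ball_S(g \leftaction m, \rho)$, write $m' = g \leftaction (g^{-1} \leftaction m')$, and apply the same invariance lemma with $g^{-1}$ in place of $g$: this gives $d_S(m, g^{-1} \leftaction m') = d_S(g \leftaction m, m') \leq \rho$, so $g^{-1} \leftaction m' \in \ball_S(m, \rho)$, hence $m' \in g \leftaction \ball_S(m, \rho)$.

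There is no real obstacle here; the only thing to watch is making sure the invariance lemma is applied to the correct pair of points (in particular, in the reverse inclusion, it is convenient to use $g^{-1}$ rather than rearranging), and that the element-chasing between $\leftaction$ on sets and on points is done cleanly.
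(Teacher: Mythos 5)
Your proposal is correct and follows essentially the same route as the paper: both directions of the inclusion are obtained by element-chasing together with \cref{lem:metric-invariant-under-left-action}, the only cosmetic difference being whether the invariance lemma is invoked with $g$ or with $g^{-1}$ at each step. No gaps.
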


  \begin{proof} 
      First, let $m' \in g \leftaction \ball_S(m, \rho)$. Then, $g^{-1} \leftaction m' \in \ball_S(m, \rho)$ and thus $d_S(m, g^{-1} \leftaction m') \leq \rho$. Hence, according to \cref{lem:metric-invariant-under-left-action},
      \begin{align*}
        d_S(g \leftaction m, m') &= d_S(g^{-1} \leftaction (g \leftaction m), g^{-1} \leftaction m')\\
                                 &= d_S(m, g^{-1} \leftaction m')\\
                                 &\leq \rho.
      \end{align*}
      Therefore, $m' \in \ball_S(g \leftaction m, \rho)$. In conclusion, $g \leftaction \ball_S(m, \rho) \subseteq \ball_S(g \leftaction m, \rho)$.

      Secondly, let $m' \in \ball_S(g \leftaction m, \rho)$. Then, $d_S(g \leftaction m, m') \leq \rho$. Thus, according to \cref{lem:metric-invariant-under-left-action},
      \begin{align*}
        d_S(m, g^{-1} \leftaction m') &= d_S(g \leftaction m, g \leftaction (g^{-1} \leftaction m'))\\
                                      &= d_S(g \leftaction m, m')\\
                                      &\leq \rho.
      \end{align*}
      Hence, $g^{-1} \leftaction m' \in \ball_S(m, \rho)$. Therefore, $m' \in g \leftaction \ball_S(m, \rho)$. In conclusion, $\ball_S(g \leftaction m, \rho) \subseteq g \leftaction \ball_S(m, \rho)$.
  \end{proof}

  \begin{corollary}
    Let $m$ be an element of $M$, let $\rho$ be a non-negative integer, and let $g_m$ be an element of $G_m$. Then, $g_m \leftaction \ball_S(m, \rho) = \ball_S(m, \rho)$. In particular, $G_m \leftaction \ball_S(m, \rho) = \ball_S(m, \rho)$.
  \end{corollary}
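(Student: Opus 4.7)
The plan is to derive this corollary as an immediate consequence of the preceding \cref{lem:left-action-and-balls}, using only the defining property of the stabiliser $G_m$.

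First, I would apply \cref{lem:left-action-and-balls} with $g = g_m$ to obtain
\begin{equation*}
  g_m \leftaction \ball_S(m, \rho) = \ball_S(g_m \leftaction m, \rho).
\end{equation*}
Then, because $g_m \in G_m = (\blank \leftaction m)^{-1}(m)$, we have $g_m \leftaction m = m$ by definition of the stabiliser, so the right-hand side simplifies to $\ball_S(m, \rho)$, which proves the first claim.

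For the \enquote{In particular} part, I would take the union over all $g_m \in G_m$ of the equality just established, yielding
\begin{equation*}
  G_m \leftaction \ball_S(m, \rho) = \bigcup_{g_m \in G_m} g_m \leftaction \ball_S(m, \rho) = \bigcup_{g_m \in G_m} \ball_S(m, \rho) = \ball_S(m, \rho),
\end{equation*}
where the final equality holds because $G_m$ is non-empty (it contains the identity of $G$).

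There is no substantive obstacle here; the only thing to be careful about is not confusing the two uses of \cref{lem:left-action-and-balls}, since the point at which the ball is centred changes under the action in general, and it is precisely the stabiliser hypothesis that collapses that change into the identity. The proof is essentially a one-line invocation of the preceding lemma.
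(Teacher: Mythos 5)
Your proof is correct and follows exactly the paper's own argument: the paper likewise notes that $g_m \leftaction m = m$ and cites \cref{lem:left-action-and-balls} to conclude immediately. Your explicit treatment of the \enquote{In particular} part via a union over $G_m$ is a harmless elaboration of what the paper leaves implicit.
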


  \begin{proof}
    Because $g_m \leftaction m = m$, this is a direct consequence of \cref{lem:left-action-and-balls}.
  %
  \end{proof}

  \begin{corollary} 
  \label{cor:balls-of-equal-radius-have-same-number-of-elements}
    Let $m$ and $m'$ be two elements of $M$, and let $\rho$ be a non-negative integer. Then, $\abs{\ball_S(m, \rho)} = \abs{\ball_S(m', \rho)}$.
  \end{corollary}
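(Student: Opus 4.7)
The plan is to exploit the transitivity of the left action $\leftaction$ together with \cref{lem:left-action-and-balls} to transport one ball onto the other by a bijection of $M$. Concretely, since $\mathcal{M}$ is a left homogeneous space, the map $\blank \leftaction m$ is surjective, so I pick an element $g \in G$ with $g \leftaction m = m'$.

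Next, I apply \cref{lem:left-action-and-balls} to obtain
\begin{equation*}
  g \leftaction \ball_S(m, \rho) = \ball_S(g \leftaction m, \rho) = \ball_S(m', \rho).
\end{equation*}
Because $\blank \leftaction \blank$ is a group action, the map $g \leftaction \blank \from M \to M$ is a bijection (with inverse $g^{-1} \leftaction \blank$), so its restriction to $\ball_S(m, \rho)$ is a bijection onto the image $g \leftaction \ball_S(m, \rho) = \ball_S(m', \rho)$. Comparing cardinalities yields $\abs{\ball_S(m, \rho)} = \abs{\ball_S(m', \rho)}$, as required.

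There is essentially no obstacle here: the proof is a two-line consequence of transitivity and the preceding lemma. The only thing to be mindful of is that the equality is asserted in cardinal arithmetic, so no finiteness assumption on the balls is needed — the bijection furnished by $g \leftaction \blank$ gives the equality of cardinalities in full generality.
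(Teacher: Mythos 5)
Your proof is correct and follows exactly the same route as the paper: choose $g \in G$ with $g \leftaction m = m'$ by transitivity, apply \cref{lem:left-action-and-balls}, and use injectivity (indeed bijectivity) of $g \leftaction \blank$ to compare cardinalities. No issues.
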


  \begin{proof}
    Because there is a $g \in G$ such that $g \leftaction m = m'$, and $g \leftaction \blank$ is injective, this is a direct consequence of \cref{lem:left-action-and-balls}.
  \end{proof}

  \begin{lemma} 
  \label{lem:liberation-under-identification-of-quotient-set-with-M} 
    Let $m$ and $m'$ be two elements of $M$ and identify $M$ with $G \quotient G_0$ by $[m \mapsto G_{m_0,m}]$. Then, $m \rightsemiaction m' = g_{m_0, m} \leftaction m'$.
  \end{lemma}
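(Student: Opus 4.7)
The plan is to unfold the two definitions involved and observe that the claim is essentially a restatement of the formula $\rightsemiaction \from (m, \mathfrak{g}) \mapsto g_{m_0, m} \leftaction \mathfrak{g}$ already recorded in the preliminary notions, specialised to the coset representative $g_{m_0, m'}$. Concretely, under the identification $\iota \from M \to G \quotient G_0$, $m' \mapsto G_{m_0, m'}$, the point $m'$ on the left-hand side of the claim is read as the coset $G_{m_0, m'}$, so what must be shown is that
\begin{equation*}
  m \rightsemiaction G_{m_0, m'} = g_{m_0, m} \leftaction m'.
\end{equation*}

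First I would note that $g_{m_0, m'} \in G_{m_0, m'}$, since $g_{m_0, m'} \leftaction m_0 = m'$ by definition of a coordinate system; therefore $g_{m_0, m'}$ is an admissible representative of the coset $G_{m_0, m'}$. Next, I would apply the definition of the right semi-action $\rightsemiaction$ with this representative:
\begin{equation*}
  m \rightsemiaction G_{m_0, m'} = g_{m_0, m} \, g_{m_0, m'} \, g_{m_0, m}^{-1} \leftaction m.
\end{equation*}
Since $g_{m_0, m} \leftaction m_0 = m$, we have $g_{m_0, m}^{-1} \leftaction m = m_0$, so the right-hand side collapses to $g_{m_0, m} \, g_{m_0, m'} \leftaction m_0$. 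Using that $\leftaction$ is a group action, this equals $g_{m_0, m} \leftaction (g_{m_0, m'} \leftaction m_0) = g_{m_0, m} \leftaction m'$, which is what was to be shown.

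There is no real obstacle: the only subtlety is a notational one, namely keeping straight which occurrences of $m'$ are being read as points of $M$ and which as cosets of $G \quotient G_0$ under $\iota$. The independence of the representative chosen for the coset is automatic from the definition of $\rightsemiaction$, so no separate well-definedness argument is needed.
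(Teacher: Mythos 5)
Your proof is correct and follows essentially the same route as the paper: both unfold the definition of $\rightsemiaction$ on a representative of the coset $G_{m_0, m'}$ and reduce $g_{m_0, m} g g_{m_0, m}^{-1} \leftaction m$ to $g_{m_0, m} g \leftaction m_0 = g_{m_0, m} \leftaction m'$. The only cosmetic difference is that you fix the particular representative $g_{m_0, m'}$ while the paper takes an arbitrary $g \in G_{m_0, m'}$; this changes nothing of substance.
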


  \begin{proof}
    Let $g \in G_{m_0, m'}$. Then, $G_{m_0, m'} = g G_0$. Hence,
    \begin{align*}
      m \rightsemiaction m' &= m \rightsemiaction G_{m_0, m'}\\
                       &= m \rightsemiaction g G_0\\
                       &= g_{m_0, m} g \leftaction m_0\\
                       &= g_{m_0, m} \leftaction (g \leftaction m_0)\\
                       &= g_{m_0, m} \leftaction m'. \qedhere
    \end{align*}
  \end{proof}

  \begin{lemma} 
  \label{lem:almost-associativity-of-liberation-under-identification}
    Let $m$, $m'$, and $m''$ be three elements of $M$ and identify $M$ with $G \quotient G_0$ by $[m \mapsto G_{m_0, m}]$. Then, there is an element $g_0 \in G_0$ such that $(m \rightsemiaction m') \rightsemiaction m'' = m \rightsemiaction (m' \rightsemiaction (g_0 \leftaction m''))$. 
  \end{lemma}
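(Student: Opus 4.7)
The plan is to apply the defining property of the right semi-action $\rightsemiaction$ to the base point $m$ and the group element $g \coloneqq g_{m_0, m'}$, and then carefully pick the "test" coset $\mathfrak{g}'$ so that the right-hand side becomes the desired expression involving $m''$.

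First, I would invoke the semi-action axiom
\begin{equation*}
  \ForEach m \in M \ForEach g \in G \Exists g_0 \in G_0 \SuchThat \ForEach \mathfrak{g}' \in G \quotient G_0 \Holds m \rightsemiaction g \cdot \mathfrak{g}' = (m \rightsemiaction g G_0) \rightsemiaction g_0 \cdot \mathfrak{g}'
\end{equation*}
for the given $m$ and for $g \coloneqq g_{m_0, m'}$. This produces a single element $g_0 \in G_0$ that works uniformly in $\mathfrak{g}'$. Under the identification $M \leftrightarrow G \quotient G_0$, the coset $g G_0 = G_{m_0, m'}$ corresponds to $m'$, so the right-hand side already contains the factor $m \rightsemiaction m'$.

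Next, I would choose $\mathfrak{g}' \coloneqq g_0^{-1} g_{m_0, m''} G_0$. With this choice $g_0 \cdot \mathfrak{g}' = g_{m_0, m''} G_0$, which, under the identification, is precisely $m''$; therefore the axiom specialises to
\begin{equation*}
  m \rightsemiaction \bigl(g_{m_0, m'} g_0^{-1} g_{m_0, m''} G_0\bigr) = (m \rightsemiaction m') \rightsemiaction m''.
\end{equation*}
It remains to recognise the left-hand side as $m \rightsemiaction (m' \rightsemiaction (g_0^{-1} \leftaction m''))$. For this I would apply \cref{lem:liberation-under-identification-of-quotient-set-with-M}: under the identification, $g_0^{-1} \leftaction m''$ corresponds to $g_0^{-1} g_{m_0, m''} G_0$, so that $m' \rightsemiaction (g_0^{-1} \leftaction m'') = g_{m_0, m'} \leftaction (g_0^{-1} \leftaction m'') = g_{m_0, m'} g_0^{-1} \leftaction m''$, which in turn corresponds to $g_{m_0, m'} g_0^{-1} g_{m_0, m''} G_0$. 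A further application of the same lemma then identifies the left-hand side above with $m \rightsemiaction (m' \rightsemiaction (g_0^{-1} \leftaction m''))$, and setting the promised element to $g_0^{-1} \in G_0$ concludes the argument.

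The main (and really only) subtlety will be notational bookkeeping between the two interpretations of a point of $M$: as an actual point, on which $\leftaction$ acts, and as the coset $G_{m_0, \cdot}$, which is what the semi-action consumes. The strength of the semi-action axiom — that the same $g_0$ works for every $\mathfrak{g}'$ — is exactly what allows us to treat $g_0$ as a parameter that we are free to absorb into $\mathfrak{g}'$, so no further structural argument is needed.
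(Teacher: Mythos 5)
Your proposal is correct and follows essentially the same route as the paper: apply the semi-action axiom with $g = g_{m_0,m'}$, absorb the resulting $g_0$ into the test coset $\mathfrak{g}'$ so that the right-hand side becomes $(m \rightsemiaction m') \rightsemiaction m''$, and then use \cref{lem:liberation-under-identification-of-quotient-set-with-M} to read the left-hand side as $m \rightsemiaction (m' \rightsemiaction (g_0^{-1} \leftaction m''))$. The only difference is cosmetic — the paper relabels so the witness appears as $g_0$ rather than $g_0^{-1}$, which is immaterial since $G_0$ is a group.
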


  \begin{proof} 
    Because $\rightsemiaction$ is a right semi-action, there is an element $g_0 \in G_0$ such that $m \rightsemiaction g_{m_0, m'} \cdot g_0 \cdot G_{m_0, m''} = (m \rightsemiaction g_{m_0, m'} G_0) \rightsemiaction G_{m_0, m''}$. And, under the identification of $M$ with $G \quotient G_0$, we have $G_{m_0, m''} = m''$, $g_{m_0, m'} G_0 = G_{m_0, m'} = m'$, and $g_{m_0, m'} \cdot g_0 \cdot G_{m_0, m''} = m' \rightsemiaction (g_0 \leftaction m'')$. Therefore, $m \rightsemiaction (m' \rightsemiaction (g_0 \leftaction m'')) = (m \rightsemiaction m') \rightsemiaction m''$.
  \end{proof}

  \begin{corollary}
  \label{cor:ball-centred-at-mzero-to-m}
    Let $m$ be an element of $M$, let $\rho$ be a non-negative integer, and identify $M$ with $G \quotient G_0$ by $[m \mapsto G_{m_0,m}]$. Then, $m \rightsemiaction \ball_S(\rho) = \ball_S(m, \rho)$.
  \end{corollary}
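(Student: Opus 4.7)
The plan is to reduce the statement to the two lemmata that describe how $\rightsemiaction$ behaves under the identification of $M$ with $G \quotient G_0$ and how balls transform under the left action $\leftaction$. Concretely, under the identification $m \mapsto G_{m_0, m}$, \cref{lem:liberation-under-identification-of-quotient-set-with-M} tells us that for every $m' \in M$ we have $m \rightsemiaction m' = g_{m_0, m} \leftaction m'$. Applying this pointwise on the ball $\ball_S(\rho) = \ball_S(m_0, \rho)$ gives
\begin{equation*}
  m \rightsemiaction \ball_S(\rho) = \set{m \rightsemiaction m' \suchthat m' \in \ball_S(m_0, \rho)} = g_{m_0, m} \leftaction \ball_S(m_0, \rho).
\end{equation*}

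Next I would invoke \cref{lem:left-action-and-balls} with the group element $g_{m_0, m}$ and base point $m_0$, which yields
\begin{equation*}
  g_{m_0, m} \leftaction \ball_S(m_0, \rho) = \ball_S(g_{m_0, m} \leftaction m_0, \rho) = \ball_S(m, \rho),
\end{equation*}
the last equality being the defining property of the coordinate system. Chaining these two equalities gives the desired identity $m \rightsemiaction \ball_S(\rho) = \ball_S(m, \rho)$.

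There is essentially no obstacle: the argument is a two-line composition of previously established facts, and no subtlety arises from the choice of coset representatives because \cref{lem:liberation-under-identification-of-quotient-set-with-M} already absorbs that freedom into the definition of $g_{m_0, m}$. The only thing to be careful about is to use the identification consistently on both sides, which is why I would state explicitly in the opening line that $\ball_S(\rho) = \ball_S(m_0, \rho)$ and that each $m' \in \ball_S(\rho)$ is treated as an element of $M$ in the application of \cref{lem:liberation-under-identification-of-quotient-set-with-M}.
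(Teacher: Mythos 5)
Your proof is correct and is essentially identical to the paper's own argument: both apply \cref{lem:liberation-under-identification-of-quotient-set-with-M} pointwise to rewrite $m \rightsemiaction \ball_S(\rho)$ as $g_{m_0,m} \leftaction \ball_S(m_0,\rho)$ and then conclude with \cref{lem:left-action-and-balls} and $g_{m_0,m} \leftaction m_0 = m$. No differences worth noting.
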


  \begin{proof}
    According to \cref{lem:liberation-under-identification-of-quotient-set-with-M} and \cref{lem:left-action-and-balls},
    \begin{align*}
      m \rightsemiaction \ball_S(\rho) &= g_{m_0, m} \leftaction \ball_S(\rho)\\
                               &= g_{m_0, m} \leftaction \ball_S(m_0, \rho)\\
                               &= \ball_S(g_{m_0, m} \leftaction m_0, \rho)\\
                               &= \ball_S(m, \rho). \qedhere
    \end{align*}
  %
  \end{proof}

  \begin{corollary}
  \label{cor:liberation-of-balls-yields-bigger-one}
    Let $m$ be an element of $M$, let $\rho$ and $\rho'$ be two non-negative integers, and identify $M$ with $G \quotient G_0$ by $[m \mapsto G_{m_0,m}]$. Then, $\ball_S(m, \rho) \rightsemiaction \ball_S(\rho') = \ball_S(m, \rho + \rho')$.
  \end{corollary}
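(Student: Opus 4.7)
My plan is to prove the equality by establishing both inclusions. The two key ingredients are Corollary~\ref{cor:ball-centred-at-mzero-to-m}, which yields $n \rightsemiaction \ball_S(\rho'') = \ball_S(n, \rho'')$ for every element $n \in M$ and every non-negative integer $\rho''$, together with the triangle inequality for the metric $d_S$.

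For the inclusion $\subseteq$, I would take arbitrary $a \in \ball_S(m, \rho)$ and $b \in \ball_S(\rho')$. By Corollary~\ref{cor:ball-centred-at-mzero-to-m} applied at $n = a$ and radius $\rho'$, we have $a \rightsemiaction b \in a \rightsemiaction \ball_S(\rho') = \ball_S(a, \rho')$, so $d_S(a, a \rightsemiaction b) \leq \rho'$. Combined with $d_S(m, a) \leq \rho$, the triangle inequality gives $d_S(m, a \rightsemiaction b) \leq \rho + \rho'$, that is, $a \rightsemiaction b \in \ball_S(m, \rho + \rho')$.

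For the inclusion $\supseteq$, I would take $x \in \ball_S(m, \rho + \rho')$, set $k = d_S(m, x) \leq \rho + \rho'$, pick a family $\family{s_i}_{i \in \set{1, \dotsc, k}} \subseteq S$ realising this distance, and truncate the resulting path at position $j = \min(k, \rho)$. Letting $a = (((m \rightsemiaction s_1) \rightsemiaction s_2) \rightsemiaction \dotsb) \rightsemiaction s_j$, Lemma~\ref{lem:truncated-minmal-path-yields-minimal-path} gives $d_S(m, a) = j \leq \rho$, so $a \in \ball_S(m, \rho)$, while the suffix of length $k - j \leq \rho'$ witnesses $d_S(a, x) \leq \rho'$. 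Hence $x \in \ball_S(a, \rho') = a \rightsemiaction \ball_S(\rho')$ by Corollary~\ref{cor:ball-centred-at-mzero-to-m}, so $x \in \ball_S(m, \rho) \rightsemiaction \ball_S(\rho')$.

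I do not foresee a significant obstacle: the argument reduces the problem to the already-established point-centred identity $n \rightsemiaction \ball_S(\rho'') = \ball_S(n, \rho'')$ combined with a standard geodesic-truncation trick, and the case analysis $k \leq \rho$ versus $k > \rho$ (verifying $k - j \leq \rho'$) is routine.
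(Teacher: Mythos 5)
Your proposal is correct and follows essentially the same route as the paper's proof: the forward inclusion via \cref{cor:ball-centred-at-mzero-to-m} and the triangle inequality, and the reverse inclusion via truncating a geodesic from $m$ to $x$ at radius $\rho$. The only cosmetic difference is that you unify the paper's two cases ($x \in \ball_S(m,\rho)$ versus $x \notin \ball_S(m,\rho)$) by truncating at $j = \min(k,\rho)$ and citing \cref{lem:truncated-minmal-path-yields-minimal-path} explicitly, which is a slight tidying rather than a different argument.
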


  \begin{proof}
      First, let $m' \in \ball_S(m, \rho) \rightsemiaction \ball_S(\rho')$. Then, there is an $m'' \in \ball_S(m, \rho)$ such that $m' \in m'' \rightsemiaction \ball_S(\rho')$. Moreover, according to \cref{cor:ball-centred-at-mzero-to-m}, we have $m'' \rightsemiaction \ball_S(\rho') = \ball_S(m'', \rho')$. Hence, because $d_S$ is subadditive, $d_S(m, m') \leq d_S(m, m'') + d_S(m'', m') \leq \rho + \rho'$. Therefore, $m' \in \ball_S(m, \rho + \rho')$. In conclusion, $\ball_S(m, \rho) \rightsemiaction \ball_S(\rho') \subseteq \ball_S(m, \rho + \rho')$.

      Secondly, let $m' \in \ball_S(m, \rho + \rho')$.
      \begin{description}
        \item[Case 1] $m' \in \ball_S(m, \rho)$. Then, because $m_0 \in \ball_S(\rho')$, we have $m' = m' \rightsemiaction m_0 \in \ball_S(m, \rho) \rightsemiaction \ball_S(\rho')$.
        \item[Case 2] $m' \notin \ball_S(m, \rho)$. Then, there is a $j \in \set{\rho + 1, \rho + 2, \dotsc, \rho + \rho'}$ and there is a $\family{s_i}_{i \in \set{1,2,\dotsc,j}} \subseteq S$ such that $(((m'' \rightsemiaction s_{\rho + 1}) \rightsemiaction s_{\rho + 2}) \rightsemiaction \dotsb) \rightsemiaction s_{\rho + \rho'} = m'$, where $m'' = (((m \rightsemiaction s_1) \rightsemiaction s_2) \rightsemiaction \dotsb) \rightsemiaction s_\rho \in \ball_S(m, \rho)$. Hence, $m' \in \ball_S(m'', \rho') = m'' \rightsemiaction \ball_S(\rho') \subseteq \ball_S(m, \rho) \rightsemiaction \ball_S(\rho')$.
      \end{description}
      In either case, $m' \in \ball_S(m, \rho) \rightsemiaction \ball_S(\rho')$. In conclusion, $\ball_S(m, \rho + \rho') \subseteq \ball_S(m, \rho) \rightsemiaction \ball_S(\rho')$.
  \end{proof}

  \begin{definition} 
    Let $A$ and $A'$ be two subsets of $M$. The non-negative number or infinity
    \begin{equation*}
      d_S(A, A') = \min\set{d_S(a, a') \suchthat a \in A, a' \in A'} \mathnote{distance $d_S(A, A')$ of $A$ and $A'$}
    \end{equation*}
    is called \define{distance of $A$ and $A'$}\graffito{distance of $A$ and $A'$}, where we put $\min\emptyset = \infty$. In the case that $A = \set{a}$, we write $d_S(a, A')$ in place of $d_S(\set{a}, A')$; and in the case that $A' = \set{a'}$, we write $d_S(A, a')$ in place of $d_S(A, \set{a'})$.
  \end{definition}

  \begin{lemma}
  \label{lem:distance-of-sphere-and-point}
    Let $m$ and $m'$ be two elements of $M$, and let $\rho$ be a non-negative integer such that $\rho \leq d_S(m, m')$. Then, $d_S(\sphere_S(m, \rho), m') = d_S(m, m') - \rho$.
  \end{lemma}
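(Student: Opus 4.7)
The plan is to prove the two inequalities $d_S(\sphere_S(m, \rho), m') \leq d_S(m, m') - \rho$ and $d_S(\sphere_S(m, \rho), m') \geq d_S(m, m') - \rho$ separately. Both rely essentially on the triangle inequality for $d_S$ and on \cref{lem:truncated-minmal-path-yields-minimal-path}.

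For the upper bound, I would exhibit an explicit point of $\sphere_S(m, \rho)$ that is within distance $d_S(m, m') - \rho$ of $m'$. Write $d = d_S(m, m')$ and fix a family $\family{s_i}_{i \in \set{1, 2, \dotsc, d}} \subseteq S$ realising the distance, that is, $(((m \rightsemiaction s_1) \rightsemiaction s_2) \rightsemiaction \dotsb) \rightsemiaction s_d = m'$. Put $m_\rho = (((m \rightsemiaction s_1) \rightsemiaction s_2) \rightsemiaction \dotsb) \rightsemiaction s_\rho$. By \cref{lem:truncated-minmal-path-yields-minimal-path}, $d_S(m, m_\rho) = \rho$, so $m_\rho \in \sphere_S(m, \rho)$. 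Moreover the remaining generators $s_{\rho + 1}, \dotsc, s_d$ witness $d_S(m_\rho, m') \leq d - \rho$, hence $d_S(\sphere_S(m, \rho), m') \leq d_S(m_\rho, m') \leq d - \rho$.

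For the lower bound, I would use that for every $m'' \in \sphere_S(m, \rho)$ the triangle inequality gives $d = d_S(m, m') \leq d_S(m, m'') + d_S(m'', m') = \rho + d_S(m'', m')$, so $d_S(m'', m') \geq d - \rho$. Taking the minimum over $m'' \in \sphere_S(m, \rho)$, and noting that $\sphere_S(m, \rho)$ is non-empty since $\rho \leq d$ (by the upper bound construction above), yields $d_S(\sphere_S(m, \rho), m') \geq d - \rho$.

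No step is genuinely delicate; the only thing worth being careful about is to guarantee that $\sphere_S(m, \rho)$ is non-empty, which is where the hypothesis $\rho \leq d_S(m, m')$ enters essentially, and is already handled by the truncation argument in the upper bound. Combining both inequalities gives $d_S(\sphere_S(m, \rho), m') = d_S(m, m') - \rho$.
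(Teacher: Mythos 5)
Your proof is correct and follows essentially the same route as the paper's: the upper bound via truncating a geodesic and invoking \cref{lem:truncated-minmal-path-yields-minimal-path}, and the lower bound via the triangle inequality (which the paper merely phrases as a proof by contradiction). Your explicit remark that the hypothesis $\rho \leq d_S(m, m')$ guarantees non-emptiness of the sphere is a small point the paper leaves implicit.
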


  \begin{proof} 
    Let $\rho' = d_S(m, m')$.

    Then, there is a $\family{s_i}_{i \in \set{1,2,\dotsc,\rho'}}$ such that $(((m \rightsemiaction s_1) \rightsemiaction s_2) \rightsemiaction \dotsb) \rightsemiaction s_{\rho'} = m'$. Let $m'' = (((m \rightsemiaction s_1) \rightsemiaction s_2) \rightsemiaction \dotsb) \rightsemiaction s_{\rho}$. Then, $(((m'' \rightsemiaction s_{\rho + 1}) \rightsemiaction s_{\rho + 2}) \rightsemiaction \dotsb) \rightsemiaction s_{\rho'} = m'$. And, according to \cref{lem:truncated-minmal-path-yields-minimal-path}, we have $m'' \in \sphere_S(m, \rho)$. Thus, $d_S(\sphere_S(m, \rho), m') \leq d_S(m'', m') \leq \rho' - \rho$.

    Suppose that $d_S(\sphere_S(m, \rho), m') < \rho' - \rho$. Then, there is an $m'' \in \sphere_S(m, \rho)$ such that $d_S(m'', m') < \rho' - \rho$. Hence, $d_S(m, m') \leq d_S(m, m'') + d_S(m'', m') < \rho + (\rho' - \rho) = \rho'$, which contradicts $d_S(m, m') = \rho'$. Therefore, $d_S(\sphere_S(m, \rho), m') \geq \rho' - \rho$.

    In conclusion, $d_S(\sphere_S(m, \rho), m') = \rho' - \rho = d_S(m, m') - \rho$.
  \end{proof}

  \begin{corollary}
  \label{cor:distance-of-spheres}
    Let $m$ be an element of $M$, and let $\rho$ and $\rho'$ be two non-negative integers such that the spheres $\sphere_S(m, \rho)$ and $\sphere_S(m, \rho')$ are non-empty. Then, $d_S(\sphere_S(m, \rho), \sphere_S(m, \rho')) = \abs{\rho - \rho'}$.
  \end{corollary}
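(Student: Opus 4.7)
The plan is to reduce the statement to \cref{lem:distance-of-sphere-and-point} by first normalising the radii and then using that lemma to compute, uniformly, the distance of the smaller sphere to each point of the larger one.

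Without loss of generality I would assume $\rho \leq \rho'$, so that $\abs{\rho - \rho'} = \rho' - \rho$ (the case $\rho' \leq \rho$ is symmetric because $d_S$ is symmetric on subsets). Since $\sphere_S(m, \rho')$ is non-empty, for every $m' \in \sphere_S(m, \rho')$ we have $d_S(m, m') = \rho' \geq \rho$, so \cref{lem:distance-of-sphere-and-point} applies and gives
\begin{equation*}
  d_S(\sphere_S(m, \rho), m') = d_S(m, m') - \rho = \rho' - \rho.
\end{equation*}
Since the distance between two sets equals the infimum over one set of the distance from that set's points to the other set, I would conclude
\begin{equation*}
  d_S(\sphere_S(m, \rho), \sphere_S(m, \rho')) = \min_{m' \in \sphere_S(m, \rho')} d_S(\sphere_S(m, \rho), m') = \rho' - \rho,
\end{equation*}
where the minimum is attained because $\sphere_S(m, \rho')$ is non-empty.

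There is no real obstacle here; the only subtlety is noticing that \cref{lem:distance-of-sphere-and-point} already yields a constant value $\rho' - \rho$ as $m'$ ranges over the outer sphere, which collapses the double minimisation in $d_S(\sphere_S(m, \rho), \sphere_S(m, \rho'))$ into a single evaluation. The non-emptiness hypothesis on both spheres is used only to ensure that the minima are attained and that the lemma can actually be invoked.
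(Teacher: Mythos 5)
Your proof is correct and follows essentially the same route as the paper: assume $\rho \leq \rho'$ without loss of generality, apply \cref{lem:distance-of-sphere-and-point} to each point of the outer sphere to get the constant value $\rho' - \rho$, and conclude by taking the minimum over the non-empty outer sphere.
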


  \begin{proof}
    Without loss of generality, let $\rho \leq \rho'$. Then, for each $m' \in \sphere_S(m, \rho')$, according to \cref{lem:distance-of-sphere-and-point}, we have $d_S(\sphere_S(m, \rho), m') = \rho' - \rho$. In conclusion, $d_S(\sphere_S(m, \rho), \sphere_S(m, \rho')) = \rho' - \rho = \abs{\rho - \rho'}$.
  \end{proof}

  \begin{corollary} 
    Let $m$ and $m'$ be two elements of $M$, and let $\rho$ be a non-negative integer. Then, $d_S(\sphere_S(m, \rho), m') \geq \abs{d_S(m, m') - \rho}$.
  \end{corollary}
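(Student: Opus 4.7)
The plan is to split into cases according to whether $\rho$ is at most $d_S(m, m')$ or strictly exceeds it, and to handle the degenerate possibility that $\sphere_S(m, \rho)$ is empty, which by the convention $\min \emptyset = \infty$ makes the left-hand side infinite and the inequality automatic.

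In the first case, $\rho \leq d_S(m, m')$, the preceding \cref{lem:distance-of-sphere-and-point} applies directly and yields
\begin{equation*}
  d_S(\sphere_S(m, \rho), m') = d_S(m, m') - \rho,
\end{equation*}
which equals $\abs{d_S(m, m') - \rho}$ since the right-hand side is non-negative. Note that \cref{lem:truncated-minmal-path-yields-minimal-path}, applied to a minimal path of length $d_S(m, m')$ from $m$ to $m'$, guarantees that $\sphere_S(m, \rho)$ is non-empty in this case, so the invocation is harmless.

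In the second case, $\rho > d_S(m, m')$, I may assume $\sphere_S(m, \rho) \neq \emptyset$, otherwise the conclusion is immediate. For any $m'' \in \sphere_S(m, \rho)$, the fact that $d_S$ is a metric gives the triangle inequality $d_S(m, m'') \leq d_S(m, m') + d_S(m', m'')$. Since $d_S(m, m'') = \rho$, this rearranges to
\begin{equation*}
  d_S(m', m'') \geq \rho - d_S(m, m') = \abs{\rho - d_S(m, m')}.
\end{equation*}
Taking the minimum over $m'' \in \sphere_S(m, \rho)$ on the left then yields $d_S(\sphere_S(m, \rho), m') \geq \abs{d_S(m, m') - \rho}$, completing both cases. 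There is no genuine obstacle here; the only subtlety worth spelling out is the $\min \emptyset = \infty$ convention that makes the empty-sphere case trivially true.
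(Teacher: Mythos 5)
Your proof is correct, but it is organised differently from the paper's. The paper gives a uniform one-line argument: writing $\rho' = d_S(m, m')$, it observes that $m' \in \sphere_S(m, \rho')$ and that this sphere is non-empty, so $d_S(\sphere_S(m, \rho), m') \geq d_S(\sphere_S(m, \rho), \sphere_S(m, \rho'))$, and the latter equals $\abs{\rho - \rho'}$ by \cref{cor:distance-of-spheres}; the empty-sphere case is dispatched first exactly as you do. You instead split on whether $\rho \leq d_S(m, m')$, invoking \cref{lem:distance-of-sphere-and-point} to get the stronger conclusion of equality in that case, and running a bare triangle-inequality computation in the case $\rho > d_S(m, m')$. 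Your second case is essentially the content of the lower-bound half of \cref{lem:distance-of-sphere-and-point} redone by hand, so the two arguments rest on the same underlying facts; what your version buys is that it bypasses \cref{cor:distance-of-spheres} entirely and records that the bound is attained when $\rho \leq d_S(m, m')$, at the cost of a case distinction the paper avoids. Both proofs are complete, and your attention to the $\min\emptyset = \infty$ convention and to the non-emptiness of $\sphere_S(m, \rho)$ in the first case matches the care taken in the paper.
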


  \begin{proof} 
    If $\sphere_S(m, \rho) = \emptyset$, then $d_S(\sphere_S(m, \rho), m') = \infty \geq \abs{d_S(m, m') - \rho}$. Otherwise, let $\rho' = d_S(m, m')$. Then, $\sphere_S(m, \rho') \neq \emptyset$. Hence, according to \cref{cor:distance-of-spheres}, we have $d_S(\sphere_S(m, \rho), m') \geq d_S(\sphere_S(m, \rho), \sphere_S(m, \rho')) = \abs{\rho - \rho'} = \abs{d_S(m, m') - \rho}$.
  \end{proof}

  \begin{lemma}
  \label{lem:distance-of-balls}
    Let $m$ and $m'$ be two elements of $M$, and let $\rho$ and $\rho'$ be two non-negative integers such that $\rho + \rho' \leq d_S(m, m')$. Then, $d_S(\ball_S(m, \rho), \ball_S(m', \rho')) = d_S(m, m') - (\rho + \rho')$.
  \end{lemma}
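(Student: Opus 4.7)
The plan is to prove the equality by establishing the two inequalities separately, in both cases using a geodesic from $m$ to $m'$ together with the triangle inequality (subadditivity of $d_S$).

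For the upper bound, I would fix a shortest path from $m$ to $m'$, i.e.\ a family $\family{s_i}_{i \in \set{1, 2, \dotsc, d_S(m, m')}} \subseteq S$ realising $d_S(m, m')$. Setting $m_i = (((m \rightsemiaction s_1) \rightsemiaction s_2) \rightsemiaction \dotsb) \rightsemiaction s_i$, I would single out the two intermediate points $a = m_\rho$ and $a' = m_{d_S(m, m') - \rho'}$; this is well-defined because the hypothesis $\rho + \rho' \leq d_S(m, m')$ guarantees $\rho \leq d_S(m, m') - \rho'$. By \cref{lem:truncated-minmal-path-yields-minimal-path} applied to the full geodesic (or to the tail running from $a'$ to $m'$, which has length $\rho'$), we have $a \in \ball_S(m, \rho)$ and $a' \in \ball_S(m', \rho')$. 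Since the sub-path from $a$ to $a'$ has length $d_S(m, m') - \rho' - \rho$, we conclude that $d_S(\ball_S(m, \rho), \ball_S(m', \rho')) \leq d_S(a, a') \leq d_S(m, m') - (\rho + \rho')$.

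For the lower bound, which is entirely formal, I would pick arbitrary elements $a \in \ball_S(m, \rho)$ and $a' \in \ball_S(m', \rho')$ and apply the triangle inequality for $d_S$:
\begin{equation*}
  d_S(m, m') \leq d_S(m, a) + d_S(a, a') + d_S(a', m') \leq \rho + d_S(a, a') + \rho'.
\end{equation*}
Rearranging yields $d_S(a, a') \geq d_S(m, m') - (\rho + \rho')$, and taking the minimum over all such pairs $(a, a')$ gives $d_S(\ball_S(m, \rho), \ball_S(m', \rho')) \geq d_S(m, m') - (\rho + \rho')$.

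There is no real obstacle: both inequalities are direct, with the only subtle point being that the indices $\rho$ and $d_S(m, m') - \rho'$ in the upper bound argument are legal (which follows from the hypothesis $\rho + \rho' \leq d_S(m, m')$) and that the truncated-geodesic lemma justifies the claim that the chosen intermediate vertices actually lie in the respective balls.
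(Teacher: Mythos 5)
Your proposal is correct and follows essentially the same route as the paper: the lower bound via the triangle inequality applied to arbitrary points of the two balls, and the upper bound by truncating a geodesic from $m$ to $m'$ at positions $\rho$ and $d_S(m, m') - \rho'$. The only cosmetic difference is that you invoke \cref{lem:truncated-minmal-path-yields-minimal-path} to place the intermediate vertices in their balls, whereas the paper just reads off the lengths of the initial and final sub-paths directly; both justifications are fine.
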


  \begin{proof}
    For each $m_\rho \in \ball_S(m, \rho)$ and each $m'_{\rho'} \in \ball_S(m', \rho')$, because $d_S$ is subadditive,
    \begin{align*}
      d_S(m, m') &\leq d_S(m, m_\rho) + d_S(m_\rho, m'_{\rho'}) + d_S(m'_{\rho'}, m')\\
                 &\leq \rho + d_S(m_\rho, m'_{\rho'}) + \rho',
    \end{align*}
    and hence $d_S(m_\rho, m'_{\rho'}) \geq d_S(m, m') - (\rho + \rho')$. Therefore, $d_S(\ball_S(m, \rho), \ball_S(m', \rho')) \geq d_S(m, m') - (\rho + \rho')$.

    Moreover, there is a $\family{s_i}_{i \in \set{1,2,\dotsc,d_S(m,m')}}$ such that $(((m \rightsemiaction s_1) \rightsemiaction s_2) \dotsb) \rightsemiaction s_{d_S(m,m')} = m'$. Let $m_\rho = ((m \rightsemiaction s_1) \rightsemiaction \dotsb) \rightsemiaction s_\rho$ and let $m'_{\rho'} = ((m_\rho \rightsemiaction s_{\rho + 1}) \rightsemiaction \dotsb) \rightsemiaction s_{d_S(m,m') - \rho'}$. Then, $m_\rho \in \ball_S(m, \rho)$ and, because $(((m'_{\rho'} \rightsemiaction s_{d_S(m,m') - \rho' + 1}) \rightsemiaction s_{d_S(m,m') - \rho' + 2}) \dotsb) \rightsemiaction s_{d_S(m,m')} = m'$, we have $m'_{\rho'} \in \ball_S(m', \rho')$. And, $d_S(m_\rho, m'_{\rho'}) \leq d_S(m,m') - \rho' - \rho$. Therefore, $d_S(\ball_S(m, \rho), \ball_S(m', \rho')) \leq d_S(m, m') - (\rho + \rho')$. 

    In conclusion, $d_S(\ball_S(m, \rho), \ball_S(m', \rho')) = d_S(m, m') - \rho - \rho'$.
  \end{proof}

  \section{Interiors, Closures, and Boundaries}
  \label{sec:interiors-closures-boundaries}

  In this section, let $\mathcal{R} = \ntuple{\ntuple{M, G, \leftaction}, \ntuple{m_0, \family{g_{m_0, m}}_{m \in M}}}$ be a cell space and let $S$ be a symmetric right generating set of $\mathcal{R}$.

  In \cref{def:k-interior-closure-and-boundary} we define $\theta$-interiors $A^{-\theta}$, $\theta$-closures $A^{+\theta}$, and (internal/external) $\theta$-boundaries $\boundary_\theta A$, $\boundary_\theta^- A$, or $\boundary_\theta^+ A$. And in the lemmata and corollaries of this section we characterise them and show how they and the $S$-metric relate to each other.

  \begin{definition}
  \label{def:k-interior-closure-and-boundary}
    Let $A$ be a subset of $M$, let $\theta$ be a non-negative integer, and identify $M$ with $G \quotient G_0$ by $[m \mapsto G_{m_0,m}]$.
    \begin{enumerate}
      \item The set
            \begin{equation*}
              A^{-\theta} = A^{-\ball_S(\theta)} \quad \big(= \set{m \in M \suchthat m \rightsemiaction \ball_S(\theta) \subseteq A}\big) 
            \end{equation*}
            is called \define{$\theta$-interior of $A$}\index{interior of $A$@$\theta$-interior of $A$}.
      \item The set
            \begin{equation*}
              A^{+\theta} = A^{+\ball_S(\theta)} \quad \big(= \set{m \in M \suchthat (m \rightsemiaction \ball_S(\theta)) \cap A \neq \emptyset}\big) 
            \end{equation*}
            is called \define{$\theta$-closure of $A$}\index{closure of $A$@$\theta$-closure of $A$}.
      \item The set
            \begin{equation*}
              \boundary_\theta A = A^{+\theta} \smallsetminus A^{-\theta} \quad \big(= A^{+\ball_S(\theta)} \smallsetminus A^{-\ball_S(\theta)} = \boundary_{\ball_S(\theta)} A\big) 
            \end{equation*}
            is called \define{$\theta$-boundary of $A$}\index{boundary of $A$@$\theta$-boundary of $A$}.
      \item The set
            \begin{equation*}
              \boundary_\theta^- A = A \smallsetminus A^{-\theta} 
            \end{equation*}
            is called \define{internal $\theta$-boundary of $A$}\index{$\theta$-boundary of $A$!internal}\index{boundary of $A$!internal}.
      \item The set
            \begin{equation*}
              \boundary_\theta^+ A = A^{+\theta} \smallsetminus A 
            \end{equation*}
            is called \define{external $\theta$-boundary of $A$}\index{$\theta$-boundary of $A$!external}\index{boundary of $A$!external}. \qedhere
    \end{enumerate}
  \end{definition}


  \begin{lemma}
  \label{lem:characterisation-of-k-closure-and-interior}
    Let $A$ be a subset of $M$ and identify $M$ with $G \quotient G_0$ by $[m \mapsto G_{m_0,m}]$. For each non-negative integer $\theta \in \N_0$,
    \begin{enumerate}
      \item\label{it:characterisation-of-k-closure-and-interior:interior}
            $\displaystyle A^{-\theta} = \set{m \in A \suchthat \ball_S(m, \theta) \subseteq A}$;
      \item\label{it:characterisation-of-k-closure-and-interior:closure}
            $\displaystyle A^{+\theta} = \bigcup_{m \in A} \ball_S(m, \theta) = A \rightsemiaction \ball_S(\theta)$. \qedhere
    \end{enumerate}
  \end{lemma}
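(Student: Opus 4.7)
The plan is to reduce everything to \cref{cor:ball-centred-at-mzero-to-m}, which under the identification $M \cong G \quotient G_0$ gives the crucial rewrite $m \rightsemiaction \ball_S(\theta) = \ball_S(m, \theta)$ for every $m \in M$. Once this rewrite is available, both items follow by mechanical unfolding of the definitions in \cref{def:k-interior-closure-and-boundary}.

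For item \ref{it:characterisation-of-k-closure-and-interior:interior}, I would start from $m \in A^{-\theta}$, which by definition means $m \rightsemiaction \ball_S(\theta) \subseteq A$. Rewriting the left-hand side via \cref{cor:ball-centred-at-mzero-to-m} yields $\ball_S(m, \theta) \subseteq A$. Since $d_S(m, m) = 0 \leq \theta$, we have $m \in \ball_S(m, \theta)$, so this inclusion forces $m \in A$; the converse inclusion is immediate. This gives exactly the characterisation $A^{-\theta} = \set{m \in A \suchthat \ball_S(m, \theta) \subseteq A}$.

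For item \ref{it:characterisation-of-k-closure-and-interior:closure}, I would first handle the equality $A^{+\theta} = \bigcup_{m \in A} \ball_S(m, \theta)$: by definition $m \in A^{+\theta}$ iff $(m \rightsemiaction \ball_S(\theta)) \cap A \neq \emptyset$, which by the corollary is $\ball_S(m, \theta) \cap A \neq \emptyset$, i.e.\ there exists $a \in A$ with $d_S(m, a) \leq \theta$. Because $S$ is symmetric, $d_S$ is a genuine (symmetric) metric on $M$, so this is equivalent to $m \in \ball_S(a, \theta)$ for some $a \in A$, which is exactly $m \in \bigcup_{a \in A} \ball_S(a, \theta)$. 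For the second equality, I would distribute $\rightsemiaction \ball_S(\theta)$ over the union $A = \bigcup_{a \in A} \set{a}$ and apply the corollary termwise:
\begin{equation*}
  A \rightsemiaction \ball_S(\theta) = \bigcup_{a \in A} \parens[\big]{a \rightsemiaction \ball_S(\theta)} = \bigcup_{a \in A} \ball_S(a, \theta).
\end{equation*}

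There is no real obstacle here; the argument is routine. The only subtle point is the appeal to symmetry of $d_S$ (which requires $S$ to be symmetric, as assumed throughout the section) in order to swap the roles of $m$ and $a$ inside the ball.
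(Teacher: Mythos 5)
Your proposal is correct and follows essentially the same route as the paper's proof: both items are reduced to \cref{cor:ball-centred-at-mzero-to-m}, with the symmetry of $d_S$ used to swap the centre of the ball in item \ref{it:characterisation-of-k-closure-and-interior:closure} and the termwise rewrite $a \rightsemiaction \ball_S(\theta) = \ball_S(a, \theta)$ giving the final equality. No gaps.
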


  \begin{proof}
    Let $\theta \in \N_0$ and let $m \in M$.
    \begin{enumerate}
      \item 
            According to \cref{cor:ball-centred-at-mzero-to-m},
            \begin{equation*}
              A^{-\theta} = \set{m \in M \suchthat \ball_S(m, \theta) \subseteq A}.
            \end{equation*}
            Therefore, because $m \in \ball_S(m, \theta)$,
            \begin{equation*}
              A^{-\theta} = \set{m \in A \suchthat \ball_S(m, \theta) \subseteq A}.
            \end{equation*}
      \item According to \cref{cor:ball-centred-at-mzero-to-m},
            \begin{align*}
              A^{+\theta}
              &= \set{m \in M \suchthat (m \rightsemiaction \ball_S(\theta)) \cap A \neq \emptyset}\\
              &= \set{m \in M \suchthat \Exists m' \in A \SuchThat m' \in m \rightsemiaction \ball_S(\theta)}\\
              &= \set{m \in M \suchthat \Exists m' \in A \SuchThat m' \in \ball_S(m, \theta)}.
            \end{align*}
            Moreover, because of the symmetry of $d_S$, for each $m' \in A$, 
            \begin{align*}
              m' \in \ball_S(m, \theta) &\iff d_S(m, m') \leq \theta\\
                                   &\iff m \in \ball_S(m', \theta).
            \end{align*}
            Hence, according to \cref{cor:ball-centred-at-mzero-to-m},
            \begin{align*}
              A^{+\theta}
              &= \set{m \in M \suchthat \Exists m' \in A \SuchThat m \in \ball_S(m', \theta)}\\
              &= \bigcup_{m' \in A} \ball_S(m', \theta)\\
              &= \bigcup_{m' \in A} m' \rightsemiaction \ball_S(\theta)\\
              &= A \rightsemiaction \ball_S(\theta). \qedhere
            \end{align*}
    \end{enumerate}
  \end{proof}

  \begin{corollary} 
  \label{cor:characterisation-of-k-closure-and-interior-of-balls}
    Let $m$ be an element of $M$, let $\rho$ be a non-negative integer, and let $\theta$ be a non-negative integer. Then,
    \begin{enumerate}
      \item\label{it:characterisation-of-k-closure-and-interior-of-balls:interior}
            $\displaystyle \ball_S(m, \rho)^{-\theta} \supseteq \ball_S(m, \rho - \theta)$;
      \item\label{it:characterisation-of-k-closure-and-interior-of-balls:closure}
            $\displaystyle \ball_S(m, \rho)^{+\theta} = \ball_S(m, \rho + \theta)$;
      \item\label{it:characterisation-of-k-closure-and-interior-of-balls:boundary}
            $\displaystyle \boundary_\theta \ball_S(m, \rho) \subseteq \ball_S(m, \rho + \theta) \smallsetminus \ball_S(m, \rho - \theta)$.
    \end{enumerate}
  \end{corollary}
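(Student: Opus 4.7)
The plan is to reduce each of the three inclusions or equalities to results already established in the excerpt, principally \cref{lem:characterisation-of-k-closure-and-interior} (characterising $A^{-\theta}$ and $A^{+\theta}$) and \cref{cor:liberation-of-balls-yields-bigger-one} (identifying $\ball_S(m,\rho) \rightsemiaction \ball_S(\rho')$ with $\ball_S(m,\rho+\rho')$).

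I would begin with part (ii), the closure equality, since it is the most direct. By the second characterisation in \cref{lem:characterisation-of-k-closure-and-interior}, $\ball_S(m,\rho)^{+\theta} = \ball_S(m,\rho) \rightsemiaction \ball_S(\theta)$; applying \cref{cor:liberation-of-balls-yields-bigger-one} with $\rho'$ replaced by $\theta$ immediately yields $\ball_S(m,\rho+\theta)$, giving the equality.

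For part (i), the interior inclusion, I would take an arbitrary $m' \in \ball_S(m,\rho-\theta)$, interpreting this ball as empty whenever $\theta > \rho$ so that the statement is vacuous in that case. Then $m' \in \ball_S(m,\rho)$ trivially, and by the triangle inequality every $m'' \in \ball_S(m',\theta)$ satisfies $d_S(m,m'') \leq d_S(m,m') + d_S(m',m'') \leq (\rho-\theta) + \theta = \rho$, so $\ball_S(m',\theta) \subseteq \ball_S(m,\rho)$. The first characterisation in \cref{lem:characterisation-of-k-closure-and-interior} then delivers $m' \in \ball_S(m,\rho)^{-\theta}$.

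Part (iii) would follow by combining the previous two parts. Unpacking the definition, $\boundary_\theta \ball_S(m,\rho) = \ball_S(m,\rho)^{+\theta} \smallsetminus \ball_S(m,\rho)^{-\theta}$; substituting the equality from part (ii) and the inclusion $\ball_S(m,\rho-\theta) \subseteq \ball_S(m,\rho)^{-\theta}$ from part (i), together with the elementary fact that $B \supseteq C$ implies $X \smallsetminus B \subseteq X \smallsetminus C$, yields the desired $\ball_S(m,\rho+\theta) \smallsetminus \ball_S(m,\rho-\theta)$. I do not anticipate any serious obstacle: each part collapses to a one-line application of an earlier lemma, supplemented at most by the triangle inequality for $d_S$; the only minor subtlety is the convention for $\ball_S(m,\rho-\theta)$ when $\rho < \theta$, which is handled by regarding that ball as empty.
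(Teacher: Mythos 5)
Your proposal is correct and follows essentially the same route as the paper: part (ii) via \cref{lem:characterisation-of-k-closure-and-interior} and \cref{cor:liberation-of-balls-yields-bigger-one}, part (iii) as a formal consequence of (i) and (ii), and part (i) by showing the $\theta$-ball around each point of $\ball_S(m,\rho-\theta)$ lies in $\ball_S(m,\rho)$ --- the paper cites \cref{cor:liberation-of-balls-yields-bigger-one} for this last step where you re-derive it inline from subadditivity of $d_S$, which is the same argument.
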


  \begin{proof}
    \begin{enumerate}
      \item According to \cref{cor:liberation-of-balls-yields-bigger-one}, we have $\ball_S(m, \rho - \theta) \rightsemiaction \ball_S(\theta) \subseteq \ball_S(m, \rho)$. Hence, according to \cref{def:k-interior-closure-and-boundary}, we have $\ball_S(m, \rho - \theta) \subseteq \ball_S(m, \rho)^{-\theta}$. 
      \item According to \cref{it:characterisation-of-k-closure-and-interior:closure} of \cref{lem:characterisation-of-k-closure-and-interior} and \cref{cor:liberation-of-balls-yields-bigger-one}, we have $\ball_S(m, \rho)^{+\theta} = \ball_S(m, \rho) \rightsemiaction \ball_S(\theta) = \ball_S(m, \rho + \theta)$.
      \item This is a direct consequence of \cref{it:characterisation-of-k-closure-and-interior-of-balls:interior,it:characterisation-of-k-closure-and-interior-of-balls:closure}. \qedhere
    \end{enumerate}
  \end{proof}

  \begin{lemma} 
  \label{lem:repeated-k-bounderies-etc} 
    Let $A$ be a subset of $M$, and let $\theta$ and $\theta'$ be two non-negative integers. The following statements hold:
    \begin{enumerate}
      \item \label{it:repeated-k-bounderies-etc:interior}
            $\displaystyle (A^{-\theta})^{-\theta'} = A^{-(\theta + \theta')}$;
      \item $\displaystyle \boundary_{\theta'}^- A^{-\theta} = A^{-\theta} \smallsetminus A^{-(\theta + \theta')}$;
      \item \label{it:repeated-k-bounderies-etc:closure}
            $\displaystyle (A^{+\theta})^{+\theta'} = A^{+(\theta + \theta')}$;
      \item \label{it:repeated-k-bounderies-etc:external-boundary}
            $\displaystyle \boundary_{\theta'}^+ A^{+\theta} = A^{+(\theta + \theta')} \smallsetminus A^{+\theta}$;
      \item \label{it:repeated-k-bounderies-etc:closure-interior}
            Let $\theta' \leq \theta$. Then, $A^{+(\theta - \theta')} \subseteq (A^{+\theta})^{-\theta'}$ and $(A^{-\theta})^{+\theta'} \subseteq A^{-(\theta - \theta')}$. \qedhere
    \end{enumerate}
  \end{lemma}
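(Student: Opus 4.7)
The plan is to identify $M$ with $G \quotient G_0$ via $[m \mapsto G_{m_0, m}]$ throughout, so that \cref{cor:ball-centred-at-mzero-to-m} gives $m \rightsemiaction \ball_S(\rho) = \ball_S(m, \rho)$ and so that, by \cref{lem:characterisation-of-k-closure-and-interior}, $A^{+\theta} = \bigcup_{m \in A} \ball_S(m, \theta) = A \rightsemiaction \ball_S(\theta)$ and $A^{-\theta} = \set{m \in M \suchthat \ball_S(m, \theta) \subseteq A}$. The only further fact I need is the ball-union identity $\ball_S(m, \rho) \rightsemiaction \ball_S(\rho') = \ball_S(m, \rho + \rho')$ from \cref{cor:liberation-of-balls-yields-bigger-one}.

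For the closure identity in item (3), I would rewrite $(A^{+\theta})^{+\theta'} = (A \rightsemiaction \ball_S(\theta)) \rightsemiaction \ball_S(\theta') = \bigcup_{m \in A} \bigl(\ball_S(m, \theta) \rightsemiaction \ball_S(\theta')\bigr)$ and invoke \cref{cor:liberation-of-balls-yields-bigger-one} to reduce this to $\bigcup_{m \in A} \ball_S(m, \theta + \theta') = A^{+(\theta + \theta')}$. Item (4) is then immediate from the definition $\boundary_{\theta'}^+ A^{+\theta} = (A^{+\theta})^{+\theta'} \smallsetminus A^{+\theta}$. For the interior identity in item (1), I would exploit the duality $A^{-\theta} = M \smallsetminus (M \smallsetminus A)^{+\theta}$, which follows by unfolding ($m$ lies in the left-hand side iff $\ball_S(m, \theta) \cap (M \smallsetminus A) = \emptyset$ iff $m \notin (M \smallsetminus A)^{+\theta}$); applying this duality twice and using item (3) yields $(A^{-\theta})^{-\theta'} = A^{-(\theta + \theta')}$, and item (2) then follows from the definition of the internal boundary.

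For item (5) the only real content is the triangle inequality for $d_S$. To prove $A^{+(\theta - \theta')} \subseteq (A^{+\theta})^{-\theta'}$, I would pick $m$ in the left-hand side, choose $a \in A \cap \ball_S(m, \theta - \theta')$, and observe that for every $m'' \in \ball_S(m, \theta')$ the triangle inequality yields $d_S(a, m'') \leq (\theta - \theta') + \theta' = \theta$, so $a \in \ball_S(m'', \theta) \cap A$ and hence $m'' \in A^{+\theta}$; since this holds for all $m'' \in m \rightsemiaction \ball_S(\theta')$, we conclude $m \in (A^{+\theta})^{-\theta'}$. The second inclusion $(A^{-\theta})^{+\theta'} \subseteq A^{-(\theta - \theta')}$ I would handle either by applying the first inclusion to $M \smallsetminus A$ through the duality above, or by a symmetric triangle-inequality argument: pick $m' \in A^{-\theta} \cap \ball_S(m, \theta')$ and show that every $m'' \in \ball_S(m, \theta - \theta')$ satisfies $d_S(m', m'') \leq \theta$, hence lies in $\ball_S(m', \theta) \subseteq A$.

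I do not foresee a substantive obstacle. The one subtlety is that $\rightsemiaction$ is only a semi-action, but every composition $\blank \rightsemiaction \ball_S(\theta)$ that appears is immediately resolved into an ordinary ball by \cref{cor:ball-centred-at-mzero-to-m} and \cref{cor:liberation-of-balls-yields-bigger-one}, after which all remaining manipulations are elementary set-theoretic and metric bookkeeping.
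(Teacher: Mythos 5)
Your proposal is correct, and for items (3) and (4) it matches the paper's proof exactly (rewrite $A^{+\theta}$ as $\bigcup_{m \in A} \ball_S(m,\theta)$ and collapse $\ball_S(m,\theta) \rightsemiaction \ball_S(\theta')$ via \cref{cor:liberation-of-balls-yields-bigger-one}). For items (1) and (5) you take a genuinely different route. The paper proves (1) directly: it characterises $A^{-\theta}$ as $\set{m \in A \suchthat \ball_S(m,\theta) \subseteq A}$, unfolds the condition for $(A^{-\theta})^{-\theta'}$ into $\ball_S(m,\theta') \rightsemiaction \ball_S(\theta) \subseteq A$, and collapses the left-hand side to $\ball_S(m,\theta+\theta')$; you instead pass through the complement duality $A^{-\theta} = M \smallsetminus (M \smallsetminus A)^{+\theta}$ and deduce (1) from (3). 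That duality is valid (your one-line verification is exactly right) and is arguably tidier, at the cost of introducing an identity the paper never states. For item (5) the paper stays at the level of the composition laws: it computes $A^{+(\theta-\theta')} \rightsemiaction \ball_S(\theta') = A^{+\theta}$ from item (3) and reads off membership in $(A^{+\theta})^{-\theta'}$, and similarly shows $((A^{-\theta})^{+\theta'})^{+(\theta-\theta')} = (A^{-\theta})^{+\theta} \subseteq A$; you unwind everything to the triangle inequality for $d_S$. Both arguments are sound --- the triangle inequality is precisely what \cref{cor:liberation-of-balls-yields-bigger-one} packages (together with the geodesic-extension half needed for the reverse inclusion, which your argument does not even need since item (5) only claims inclusions) --- so your version is more elementary while the paper's reuses machinery already established.
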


  \begin{proof} 
    \begin{enumerate}
      \item For each $m' \in A$, according to \cref{cor:ball-centred-at-mzero-to-m} and \cref{lem:characterisation-of-k-closure-and-interior}, we have $m' \in A^{-\theta}$ if and only if $m' \rightsemiaction \ball_S(\theta) = \ball_S(m', \theta) \subseteq A$. Hence, according to \cref{cor:liberation-of-balls-yields-bigger-one},
            \begin{align*}
              (A^{-\theta})^{-\theta'}
              &= \set{m' \in A \suchthat \ball_S(m', \theta') \subseteq A^{-\theta}}\\
              &= \set{m' \in A \suchthat \ball_S(m', \theta') \rightsemiaction \ball_S(\theta) \subseteq A}\\
              &= \set{m' \in A \suchthat \ball_S(m', \theta + \theta') \subseteq A}\\
              &= A^{-(\theta + \theta')}.
            \end{align*}
      \item According to \cref{it:repeated-k-bounderies-etc:interior},
            \begin{align*}
              \boundary_{\theta'}^- A^{-\theta}
              &= A^{-\theta} \smallsetminus (A^{-\theta})^{-\theta'}\\
              &= A^{-\theta} \smallsetminus A^{-(\theta + \theta')}.
            \end{align*}
      \item According to \cref{lem:characterisation-of-k-closure-and-interior} and \cref{cor:liberation-of-balls-yields-bigger-one},
            \begin{align*}
              (A^{+\theta})^{+\theta'}
              &= A^{+\theta} \rightsemiaction \ball_S(\theta')\\
              &= \parens*{\bigcup_{m \in A} \ball_S(m, \theta)} \rightsemiaction \ball_S(\theta')\\
              &= \bigcup_{m \in A} \ball_S(m, \theta) \rightsemiaction \ball_S(\theta')\\
              &= \bigcup_{m \in A} \ball_S(m, \theta + \theta')\\
              &= A^{+(\theta + \theta')}. 
            \end{align*}
      \item According to \cref{it:repeated-k-bounderies-etc:closure},
            \begin{align*}
              \boundary_{\theta'}^+ A^{+\theta}
              &= (A^{+\theta})^{+\theta'} \smallsetminus A^{+\theta}\\
              &= A^{+(\theta + \theta')} \smallsetminus A^{+\theta}.
            \end{align*}
      \item According to \cref{lem:characterisation-of-k-closure-and-interior} and \cref{it:repeated-k-bounderies-etc:closure}, 
            \begin{align*}
              A^{+(\theta - \theta')} \rightsemiaction \ball_S(\theta')
              &= (A^{+(\theta - \theta')})^{+\theta'}\\
              &= A^{+((\theta - \theta') + \theta')}\\
              &= A^{+\theta}.
            \end{align*}
            Thus, for each $m \in A^{+(\theta - \theta')}$, according to \cref{cor:liberation-of-balls-yields-bigger-one}, we have $\ball_S(m, \theta') = m \rightsemiaction \ball_S(\theta') \subseteq A^{+\theta}$ and, in particular, $m \in A^{+\theta}$. Therefore, according to \cref{lem:characterisation-of-k-closure-and-interior}, we have $A^{+(\theta - \theta')} \subseteq (A^{+\theta})^{-\theta'}$.

            According to \cref{lem:characterisation-of-k-closure-and-interior}, \cref{it:repeated-k-bounderies-etc:closure}, and \cref{def:k-interior-closure-and-boundary}, 
            \begin{align*}
              (A^{-\theta})^{+\theta'} \rightsemiaction \ball_S(\theta - \theta')
              &= ((A^{-\theta})^{+\theta'})^{+(\theta - \theta')}\\
              &= (A^{-\theta})^{+\theta' + (\theta - \theta')}\\
              &= (A^{-\theta})^{+\theta}\\
              &= A^{-\theta} \rightsemiaction \ball_S(\theta)\\
              &\subseteq A.
            \end{align*}
            Therefore, according to \cref{def:k-interior-closure-and-boundary}, we have $(A^{-\theta})^{+\theta'} \subseteq A^{-(\theta - \theta')}$. \qedhere
    \end{enumerate}
  \end{proof}

  \begin{lemma}
  \label{lem:distance-of-set-minus-closure-to-set}
    Let $k$ be a non-negative integer, and let $A$ and $A'$ be two subsets of $M$. Then, $d_S(A, A' \smallsetminus A^{+k}) \geq k + 1$.
  \end{lemma}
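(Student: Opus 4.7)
The plan is to unfold the definition of $A^{+k}$ via \cref{it:characterisation-of-k-closure-and-interior:closure} of \cref{lem:characterisation-of-k-closure-and-interior}, which gives $A^{+k} = \bigcup_{m \in A} \ball_S(m, k)$. The whole statement then reduces to the observation that membership in $\ball_S(m, k)$ means being within distance $k$ of $m$, so a point lying outside $A^{+k}$ must have distance strictly greater than $k$ to every element of $A$.

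More concretely, I would proceed as follows. First, handle the degenerate case: if $A = \emptyset$ or $A' \smallsetminus A^{+k} = \emptyset$, then $d_S(A, A' \smallsetminus A^{+k}) = \min \emptyset = \infty \geq k + 1$ by the convention in the distance definition. Otherwise, pick arbitrary $a \in A$ and $a' \in A' \smallsetminus A^{+k}$. Since $a' \notin A^{+k} = \bigcup_{m \in A} \ball_S(m, k)$ by \cref{lem:characterisation-of-k-closure-and-interior}, in particular $a' \notin \ball_S(a, k)$, so $d_S(a, a') > k$, that is $d_S(a, a') \geq k + 1$. Taking the minimum over all such pairs $(a, a')$ yields $d_S(A, A' \smallsetminus A^{+k}) \geq k + 1$.

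There is essentially no obstacle here; the argument is a one-line unpacking of the alternate characterisation of the $k$-closure proved in \cref{lem:characterisation-of-k-closure-and-interior} together with the convention $\min \emptyset = \infty$. The only care needed is to verify that the degenerate empty cases are treated correctly, but the convention from the definition of $d_S(A, A')$ handles them directly.
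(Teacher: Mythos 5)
Your proof is correct, and it is in essence the same argument as the paper's: a point outside the $k$-closure of $A$ lies outside $\ball_S(a,k)$ for every $a \in A$, hence at distance at least $k+1$ from all of $A$, with the empty cases absorbed by the convention $\min\emptyset = \infty$. The only difference is the route to that observation: the paper first rewrites $A' \smallsetminus A^{+k}$ as $(A' \smallsetminus A)^{-k}$ using an identity cited from an external reference and then applies the characterisation of the $k$-interior to get $\ball_S(m',k) \subseteq A' \smallsetminus A$, whereas you apply the characterisation $A^{+k} = \bigcup_{m \in A}\ball_S(m,k)$ from \cref{it:characterisation-of-k-closure-and-interior:closure} of \cref{lem:characterisation-of-k-closure-and-interior} directly. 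Your version is marginally more self-contained, since it avoids the external citation and the detour through the interior; the implicit use of the symmetry of $d_S$ (to pass between $a' \in \ball_S(a,k)$ and $a \in \ball_S(a',k)$) is harmless, as that symmetry is already established in the proof of \cref{lem:characterisation-of-k-closure-and-interior}.
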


  \begin{proof} 
    If $A$ or $A' \smallsetminus A^{+k}$ is empty, then $d_S(A, A' \smallsetminus A^{+k}) = \infty \geq k + 1$. Otherwise, let $m' \in A' \smallsetminus A^{+k}$. According to \cite[Item~3 of Lemma~1]{wacker:garden:2016}, 
    we have $A' \smallsetminus A^{+k} = (A' \smallsetminus A)^{-k}$. Hence, according to \cref{lem:characterisation-of-k-closure-and-interior}, we have $\ball_S(m', k) \subseteq A' \smallsetminus A$. Therefore, for each $m \in A$, we have $m \notin \ball_S(m', k)$ and hence $d_S(m, m') \geq k + 1$. Thus, $d_S(A, m') \geq k + 1$. In conclusion, $d_S(A, A' \smallsetminus A^{+k}) \geq k + 1$.
  \end{proof}

  \begin{corollary}
  \label{cor:distance-of-closure-boundary-of-closure-to-set-greater-than-closure-plus-one}
    Let $k$ be a non-negative integer, let $k'$ be a positive integer, and let $A$ be a subset of $M$. Then, $d_S(A, \boundary_{k'}^+ A^{+k}) \geq k + 1$.
  \end{corollary}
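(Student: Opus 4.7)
The plan is to recognise the external boundary $\boundary_{k'}^+ A^{+k}$ as an instance of a set-minus-closure, then invoke \cref{lem:distance-of-set-minus-closure-to-set} directly. Specifically, by \cref{it:repeated-k-bounderies-etc:external-boundary} of \cref{lem:repeated-k-bounderies-etc} (applied with $\theta = k$ and $\theta' = k'$), we have
\begin{equation*}
  \boundary_{k'}^+ A^{+k} = A^{+(k + k')} \smallsetminus A^{+k}.
\end{equation*}
This is exactly of the form $A' \smallsetminus A^{+k}$ with $A' = A^{+(k+k')}$.

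Once this identification is made, applying \cref{lem:distance-of-set-minus-closure-to-set} with this choice of $A'$ yields
\begin{equation*}
  d_S\bigl(A, A^{+(k+k')} \smallsetminus A^{+k}\bigr) \geq k + 1,
\end{equation*}
which is precisely the claim. So the whole proof is a two-line combination: rewrite the external boundary via the closure-iteration formula of \cref{lem:repeated-k-bounderies-etc}, then quote \cref{lem:distance-of-set-minus-closure-to-set}. There is no real obstacle; the corollary is essentially a packaging result, and the only subtlety is making sure the hypothesis of \cref{lem:distance-of-set-minus-closure-to-set} is met, which it is trivially since $A^{+(k+k')}$ is a perfectly good subset of $M$ (and the boundary cases where $A$ or $\boundary_{k'}^+ A^{+k}$ is empty are handled inside \cref{lem:distance-of-set-minus-closure-to-set} itself via the convention $\min \emptyset = \infty$).
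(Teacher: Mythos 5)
Your proposal is correct and follows essentially the same route as the paper: recognise $\boundary_{k'}^+ A^{+k}$ as a set of the form $A' \smallsetminus A^{+k}$ and apply \cref{lem:distance-of-set-minus-closure-to-set}. The only cosmetic difference is that you pass through the iteration formula $(A^{+k})^{+k'} = A^{+(k+k')}$ from \cref{lem:repeated-k-bounderies-etc}, whereas the paper just unfolds the definition $\boundary_{k'}^+ A^{+k} = (A^{+k})^{+k'} \smallsetminus A^{+k}$ directly; both identifications are valid and lead to the same application of the lemma.
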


  \begin{proof}
    Because $\boundary_{k'}^+ A^{+k} = (A^{+k})^{+k'} \smallsetminus A^{+k}$, this is a direct consequence of \cref{lem:distance-of-set-minus-closure-to-set}.
  \end{proof}

  \begin{lemma} 
  \label{lem:finite-set-contained-in-ball-if-gen-set-contains-neutral-element}
    Let $A$ be a finite subset of $M$ and let $S'$ be the set $\set{G_0} \cup S$. There is a non-negative integer $k \in \N_0$ such that
    \begin{equation*}
      A \subseteq \set{m \in M \suchthat \Exists \family{s_i'}_{i \in \set{1,2,\dotsc,k}} \subseteq S' \SuchThat (((m_0 \rightsemiaction s_1') \rightsemiaction s_2') \rightsemiaction \dotsb) \rightsemiaction s_k'}.
    \end{equation*}
  \end{lemma}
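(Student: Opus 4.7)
My plan is to observe that the right-hand side of the desired inclusion is exactly the ball $\ball_S(k)$ centred at $m_0$, and then to choose $k$ to be the maximum $S$-length over the finite set $A$. The first axiom of a right semi-action gives $m \rightsemiaction G_0 = m$, so inserting the element $G_0 \in S'$ into a path does not change its endpoint. Hence being expressible as a $k$-fold iterated semi-action of elements of $S' = \set{G_0} \cup S$ starting from $m_0$ is equivalent to being expressible as such an iterated semi-action of length \emph{at most} $k$ with elements of $S$, which by \cref{def:metric,def:length} means exactly having $S$-length at most $k$, i.e.\ lying in $\ball_S(k)$.

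With this identification at hand, I set $k := \max\set{\abs{m}_S \suchthat m \in A}$ when $A$ is non-empty, and $k := 0$ otherwise; the maximum is well defined because $A$ is finite and $\abs{\blank}_S$ takes values in $\N_0$. For each $m \in A$, by definition of $\abs{m}_S$ there is a family $\family{s_i}_{i \in \set{1,2,\dotsc,\abs{m}_S}} \subseteq S$ with $(((m_0 \rightsemiaction s_1) \rightsemiaction s_2) \rightsemiaction \dotsb) \rightsemiaction s_{\abs{m}_S} = m$. I then pad this family on the left by $k - \abs{m}_S$ copies of $G_0$: define $s_i' := G_0$ for $i \in \set{1,\dotsc,k - \abs{m}_S}$ and $s_i' := s_{i - (k - \abs{m}_S)}$ for $i \in \set{k - \abs{m}_S + 1,\dotsc,k}$. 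Repeated application of $m_0 \rightsemiaction G_0 = m_0$ shows that the first $k - \abs{m}_S$ semi-actions keep us at $m_0$, after which the remaining semi-actions reproduce the original path from $m_0$ to $m$. Thus the family $\family{s_i'}_{i \in \set{1,\dotsc,k}} \subseteq S'$ witnesses membership of $m$ in the right-hand side.

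I do not foresee any real obstacle. The only delicate points are ensuring the maximum exists (using finiteness of $A$) and correctly handling the case $m = m_0$ with $\abs{m}_S = 0$, for which the original family is empty and the entire padded family consists of $k$ copies of $G_0$; this is precisely the reason the statement requires the extra generator $G_0$ in $S'$, since without it one could not guarantee a family of the \emph{same} length $k$ for every element of $A$.
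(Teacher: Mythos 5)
Your proof is correct and follows essentially the same route as the paper: take $k$ to be the maximum $S$-length (equivalently, the maximum distance $d_S(m_0,a)$) over the finite set $A$, and use $\blank \rightsemiaction G_0 = \identity_M$ to pad shorter paths with copies of $G_0$ so that every element of $\ball_S(k)$ is reached by a family of length exactly $k$ in $S' = \set{G_0} \cup S$. The paper's proof is terser but identical in substance, including the separate treatment of the case $A = \emptyset$.
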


  \begin{proof}
    If $A$ is empty, then any $k \in \N_0$ works. Otherwise, let $k = \max_{a \in A} d_S(m_0, a)$. Because $A$ is finite, we have $k \in \N_0$. By the choice of $k$, we have $A \subseteq \ball(m_0, k)$. And, because $G_0 \in S'$ and $\blank \rightsemiaction G_0 = \identity_M$, we have $\ball(m_0, k) = \set{m \in M \suchthat \Exists \family{s_i'}_{i \in \set{1,2,\dotsc,k}} \subseteq S' \SuchThat (((m_0 \rightsemiaction s_1') \rightsemiaction s_2') \rightsemiaction \dotsb) \rightsemiaction s_k'}$. In conclusion, the stated inclusion holds.
  \end{proof}

  \section{Growth Functions And Types}
  \label{sec:growth-functions-and-types}

  In this section we recapitulate growth functions and types, more or less as presented in the monograph \enquote{Cellular Automata and Groups}\cite{ceccherini-silberstein:coornaert:2010}.

  \begin{definition} 
    Let $\gamma$ be a map from $\N_0$ to $\R_{\geq 0}$. It is called \define{growth function}\graffito{growth function} if and only if it is \define{non-decreasing}\graffito{non-decreasing}, that is to say, that 
    \begin{equation*}
      \ForEach k \in \N_0 \ForEach k' \in \N_0 \Holds \big(k \leq k' \implies \gamma(k) \leq \gamma(k')\big). \qedhere
    \end{equation*}
  \end{definition}

  \begin{definition} 
    Let $\gamma$ and $\gamma'$ be two growth functions. The growth function $\gamma$ is said to \define{dominate}\graffito{$\gamma$ dominates $\gamma'$} $\gamma'$ and we write $\gamma \dominates \gamma'$\graffito{$\gamma \dominates \gamma'$} if and only if 
    \begin{equation*}
      \Exists \alpha \in \N_+ \SuchThat \ForEach k \in \N_+ \Holds \alpha \cdot \gamma(\alpha \cdot k) \geq \gamma'(k). \qedhere
    \end{equation*}
  \end{definition} 

  \begin{definition} 
    Let $\gamma$ and $\gamma'$ be two growth functions. They are called \define{equivalent}\graffito{equivalent} and we write $\gamma \equivalent \gamma'$\graffito{$\gamma \equivalent \gamma'$} if and only if $\gamma \dominates \gamma'$ and $\gamma' \dominates \gamma$. 
  \end{definition}

  \begin{lemma}[{\cite[Proposition~6.4.3]{ceccherini-silberstein:coornaert:2010}}]\leavevmode 
    \begin{enumerate}
      \item The relation $\dominates$ is reflexive and transitive.
      \item The relation $\equivalent$ is an equivalence relation.
      \item If $\gamma_1 \equivalent \gamma_2$ and $\gamma_1' \equivalent \gamma_2'$, then $\gamma_1 \dominates \gamma_1'$ implies $\gamma_2 \dominates \gamma_2'$. \qedhere
    \end{enumerate}
  \end{lemma}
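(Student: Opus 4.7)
The statement is a routine verification that $\dominates$ is a preorder and $\equivalent$ the associated equivalence; the only content is bookkeeping with the constant $\alpha$ in the definition of domination and an appeal to the fact that growth functions are non-decreasing. My plan is to address the three items in order, re-using earlier parts of the lemma whenever possible.

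For item (1), I would first establish reflexivity by taking $\alpha = 1$ in the definition of $\dominates$, so that the required inequality $\alpha \cdot \gamma(\alpha \cdot k) \geq \gamma(k)$ becomes the trivial $\gamma(k) \geq \gamma(k)$. For transitivity, I would assume $\gamma \dominates \gamma'$ with constant $\alpha$ and $\gamma' \dominates \gamma''$ with constant $\alpha'$, set $\beta = \alpha \alpha'$, and then chain the two inequalities: for each $k \in \N_+$,
\begin{equation*}
  \beta \cdot \gamma(\beta \cdot k) = \alpha' \cdot \alpha \cdot \gamma(\alpha \cdot (\alpha' k)) \geq \alpha' \cdot \gamma'(\alpha' \cdot k) \geq \gamma''(k).
\end{equation*}
No monotonicity is needed here; the two given inequalities compose directly.

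For item (2), reflexivity and transitivity of $\equivalent$ follow immediately from item (1) together with the symmetric form of the definition ($\gamma \equivalent \gamma'$ iff $\gamma \dominates \gamma'$ and $\gamma' \dominates \gamma$), and symmetry of $\equivalent$ is built into that definition.

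For item (3), I would just chain dominations: from $\gamma_1 \equivalent \gamma_2$ we get $\gamma_2 \dominates \gamma_1$, from the hypothesis $\gamma_1 \dominates \gamma_1'$, and from $\gamma_1' \equivalent \gamma_2'$ we get $\gamma_1' \dominates \gamma_2'$. Applying transitivity (item (1)) twice yields $\gamma_2 \dominates \gamma_2'$. There is no real obstacle in this proof; the only thing worth being careful about is remembering to use a single constant $\beta$ for both occurrences in the transitivity step, which is why $\beta = \alpha \alpha'$ (rather than separate constants) is the right choice.
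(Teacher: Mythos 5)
Your proof is correct. The paper itself gives no proof of this lemma --- it simply cites \cite[Proposition~6.4.3]{ceccherini-silberstein:coornaert:2010} --- and your direct verification (reflexivity via $\alpha = 1$, transitivity via the product constant $\alpha\alpha'$, and the two chaining arguments for items (2) and (3)) is exactly the standard argument; note only that, as you yourself observe mid-proof, monotonicity of growth functions is never actually used, so the opening remark about appealing to it is superfluous.
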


  \begin{definition} 
    Let $\gamma$ be a growth function. The equivalence class of $\gamma$ with respect to $\equivalent$ is denoted by $\equivclass{\gamma}_\equivalent$\graffito{$\equivclass{\gamma}_\equivalent$} and called \define{growth type}\graffito{growth type $\equivclass{\gamma}_\equivalent$}.
  \end{definition}


  \begin{definition} 
    Let $\Gamma$ and $\Gamma'$ be two growth types. The growth type $\Gamma$ is said to \define{dominate}\graffito{$\Gamma$ dominates $\Gamma'$} $\Gamma'$ and we write $\Gamma \dominates \Gamma'$ if and only if 
    \begin{equation*}
      \Exists \gamma \in \Gamma \Exists \gamma' \in \Gamma' \SuchThat \gamma \dominates \gamma'. \qedhere 
    \end{equation*}
  \end{definition}


  \begin{example}[{\cite[Examples~6.4.4]{ceccherini-silberstein:coornaert:2010}}]\leavevmode 
  \label{ex:growth-functions}
    \begin{enumerate}
      \item \label{it:growth-functions:id-vs-unity}
            The growth function $[k \mapsto k]$ dominates $\unityfnc$ but they are not equivalent.

            \begin{proof}
              For each $k \in \N_+$, we have $k \geq \unityfnc(k)$. But, for each $\alpha \in \N_+$, there is a $k \in \N_+$, for example $k = \alpha + 1$, such that $\alpha \unityfnc(\alpha k) = \alpha < k$.
            \end{proof}
      \item Let $r$ and $s$ be two non-negative real numbers. Then, $[k \mapsto k^r] \dominates [k \mapsto k^s]$ if and only if $r \geq s$. And, $[k \mapsto k^r] \equivalent [k \mapsto k^s]$ if and only if $r = s$. 
      \item Let $\gamma$ be a growth function such that it is a polynomial function of degree $d \in \N_0$. Then, $\gamma \equivalent [k \mapsto k^d]$. 
      \item \label{it:growth-functions:exponential-growth}
            Let $r$ and $s$ be two elements of $\R_{> 1}$. Then, $[k \mapsto r^k] \equivalent [k \mapsto s^k]$. In particular, $[k \mapsto r^k] \equivalent \exp$.

            \begin{proof}
              Without loss of generality, suppose that $r \leq s$. Then, for each $k \in \N_+$, we have $r^k \leq s^k$. Hence, $[k \mapsto r^k] \dominatedby [k \mapsto s^k]$. Moreover, let $\alpha = \ceil{\log_r s} \in \N_+$. Then, for each $k \in \N_+$,
              \begin{equation*}
                s^k = (r^{\log_r s})^k = r^{(\log_r s) k} \leq r^{\alpha k} \leq \alpha r^{\alpha k}.
              \end{equation*}
              Hence, $[k \mapsto r^k] \dominates [k \mapsto s^k]$. In conclusion, $[k \mapsto r^k] \equivalent [k \mapsto s^k]$.
            \end{proof}
      \item\label{it:growth-functions:exp-dominates-polynomials}
            Let $d$ be a non-negative integer. Then, $\exp \dominates [k \mapsto k^d]$ and $[k \mapsto k^d] \inequivalent \exp$.

            \begin{proof}
              See \cite[Examples~6.4.4 (d)]{ceccherini-silberstein:coornaert:2010}. \qedhere
            \end{proof}
    \end{enumerate}
  \end{example}

  \begin{lemma} 
  \label{lem:growth-function-dominated-by-poly-is-dominated-by-exp-and-inequivalent}
    Let $\gamma$ be a growth function and let $d$ be a non-negative integer such that $[k \mapsto k^d] \dominates \gamma$. Then, $\exp \dominates \gamma$ and $\exp \inequivalent \gamma$.
  \end{lemma}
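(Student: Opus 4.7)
The plan is to derive both assertions directly from item~\ref{it:growth-functions:exp-dominates-polynomials} of \cref{ex:growth-functions}, using only the reflexivity/transitivity of $\dominates$ and the compatibility of $\dominates$ with the equivalence $\equivalent$ recorded in the lemma preceding that example.

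First I would establish $\exp \dominates \gamma$. By item~\ref{it:growth-functions:exp-dominates-polynomials} of \cref{ex:growth-functions}, we have $\exp \dominates [k \mapsto k^d]$, and by hypothesis $[k \mapsto k^d] \dominates \gamma$. Since $\dominates$ is transitive, $\exp \dominates \gamma$ follows immediately.

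Next I would show $\exp \inequivalent \gamma$ by contradiction. Suppose $\exp \equivalent \gamma$. Combining this with the hypothesis $[k \mapsto k^d] \dominates \gamma$ and the already established $\exp \dominates [k \mapsto k^d]$, the compatibility statement of the preceding lemma (third item) lets us replace $\gamma$ with $\exp$ in the relation $[k \mapsto k^d] \dominates \gamma$, yielding $[k \mapsto k^d] \dominates \exp$. Together with $\exp \dominates [k \mapsto k^d]$, this gives $[k \mapsto k^d] \equivalent \exp$, contradicting item~\ref{it:growth-functions:exp-dominates-polynomials} of \cref{ex:growth-functions}.

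I do not foresee any real obstacle: the argument is purely formal manipulation of the preorder $\dominates$ and the induced equivalence $\equivalent$, and everything needed is stated explicitly earlier in the section. The only small subtlety is being careful to invoke the compatibility of $\dominates$ with $\equivalent$ rather than trying to rewrite $\alpha \cdot \gamma(\alpha \cdot k)$ by hand.
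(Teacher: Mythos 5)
Your proposal is correct and follows essentially the same route as the paper: both obtain $\exp \dominates \gamma$ by transitivity from item~(5) of the example, and both derive $\exp \inequivalent \gamma$ from $\exp \inequivalent [k \mapsto k^d]$. You merely spell out the contradiction step that the paper leaves implicit.
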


  \begin{proof}
    According to \cref{it:growth-functions:exp-dominates-polynomials} of \cref{ex:growth-functions}, we have $\exp \dominates [k \mapsto k^d]$ and $\exp \inequivalent [k \mapsto k^d]$. Hence, because $\dominates$ is transitive and $[k \mapsto k^d] \dominates \gamma$, we have $\exp \dominates \gamma$ and $\exp \inequivalent \gamma$. 
  \end{proof}

  \section{Cell Spaces' Growth Functions and Types}
  \label{sec:cell-spaces-growth-fnc-and-types}

  In this section, let $\mathcal{R} = \ntuple{\ntuple{M, G, \leftaction}, \ntuple{m_0, \family{g_{m_0, m}}_{m \in M}}}$ be a cell space such that there is a finite and symmetric right generating set $S$ of $\mathcal{R}$.

  In \cref{def:growth-fnct-of-cell-space} we define the $S$-growth function $\gamma_S$ of $\mathcal{R}$. In \cref{lem:metric-and-generating-set} and its corollaries we show that $\gamma_S$ is dominated by $\exp$ and that the $\equivalent$-equivalence class $\equivclass{\gamma_S}_\equivalent$ does not depend on $S$. In \cref{def:growth-type-of-cell-space} we define the growth type $\gamma(\mathcal{R})$ of $\mathcal{R}$ as that equivalence class. In \cref{lem:ball-sequence-strictly-increasing-or-eventually-constant} and its corollary we relate the inclusion-behaviour of the sequence of balls to the cardinality of $M$. And in \cref{def:growth} we define the terms \enquote{exponential growth}, \enquote{sub-exponential growth}, \enquote{polynomial growth}, and \enquote{intermediate growth of $\mathcal{R}$}.

  \begin{definition} 
  \label{def:growth-fnct-of-cell-space}
    The map
    \begin{align*}
      \gamma_S \from \N_0 &\to     \N_0, \mathnote{$S$-growth function $\gamma_S$ of $\mathcal{R}$}\\
                        k &\mapsto \abs{\ball_S(k)},
    \end{align*}
    is called \define{$S$-growth function of $\mathcal{R}$}.
  \end{definition}

  \begin{remark}
  \label{rem:growth-function-of-cell-space}
    According to \cref{rem:ball-of-radius-0-contains-one-element-and-sequence-of-balls-is-monotonic}, we have $\gamma_S(0) = 1$ and the sequence $\sequence{\gamma_S(k)}_{k \in \N_0}$ is non-decreasing with respect to the partial order $\leq$. Moreover, according to \cref{rem:upper-bound-for-cardinality-of-balls}, for each non-negative integer $k \in \N_0$, we have $\gamma_S(k) \leq (1 + \abs{S})^k$.
  \end{remark}

  \begin{lemma} 
  \label{lem:metric-and-generating-set}
    Let $S'$ be a finite and symmetric right generating set of $\mathcal{R}$ and let $\alpha$ be the non-negative integer $\min\set{k \in \N_0 \suchthat \ball_S(1) \subseteq \ball_{S'}(k)}$. Then, 
    \begin{equation*}
      \ForEach m \in M \ForEach m' \in M \Holds d_{S'}(m, m') \leq \alpha \cdot d_S(m, m'),
    \end{equation*}
    in particular, 
    \begin{equation*}
      \ForEach m \in M \Holds \abs{m}_{S'} \leq \alpha \cdot \abs{m}_S. \qedhere
    \end{equation*}
  \end{lemma}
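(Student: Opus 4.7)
\begin{proof-idea}
The plan is to prove the inequality by induction on $n = d_S(m, m')$, using the fact that each single $S$-step can be simulated by at most $\alpha$ many $S'$-steps.

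First, I would check that $\alpha$ is a well-defined non-negative integer: since $S$ is finite, $\ball_S(1)$ has at most $1 + \abs{S}$ elements by \cref{rem:upper-bound-for-cardinality-of-balls}, and, since $S'$ right generates $\mathcal{R}$, the sequence $\sequence{\ball_{S'}(k)}_{k \in \N_0}$ exhausts $M$ by \cref{rem:ball-of-radius-0-contains-one-element-and-sequence-of-balls-is-monotonic}, so some $k$ contains all finitely many elements of $\ball_S(1)$.

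Next, the crucial step is to translate the containment $\ball_S(1) \subseteq \ball_{S'}(\alpha)$ from balls centred at $m_0$ to balls centred at an arbitrary point $m \in M$. Applying $m \rightsemiaction \blank$ to both sides and invoking \cref{cor:ball-centred-at-mzero-to-m} for $S$ and for $S'$ yields
\begin{equation*}
  \ball_S(m, 1) = m \rightsemiaction \ball_S(1) \subseteq m \rightsemiaction \ball_{S'}(\alpha) = \ball_{S'}(m, \alpha)
\end{equation*}
for every $m \in M$. Equivalently, $d_{S'}(m, m'') \leq \alpha$ whenever $d_S(m, m'') \leq 1$.

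Now I would induct on $n = d_S(m, m')$. The base case $n = 0$ is trivial since $m = m'$. For $n \geq 1$, choose a family $\family{s_i}_{i \in \set{1,2,\dotsc,n}} \subseteq S$ realising the distance, so that $m' = (((m \rightsemiaction s_1) \rightsemiaction s_2) \rightsemiaction \dotsb) \rightsemiaction s_n$. Set $m'' = (((m \rightsemiaction s_1) \rightsemiaction s_2) \rightsemiaction \dotsb) \rightsemiaction s_{n-1}$; by \cref{lem:truncated-minmal-path-yields-minimal-path} we have $d_S(m, m'') = n - 1$, so by the induction hypothesis $d_{S'}(m, m'') \leq \alpha (n-1)$. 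Moreover $d_S(m'', m') \leq 1$, so by the translated containment above $d_{S'}(m'', m') \leq \alpha$. The triangle inequality for the metric $d_{S'}$ then gives
\begin{equation*}
  d_{S'}(m, m') \leq d_{S'}(m, m'') + d_{S'}(m'', m') \leq \alpha(n - 1) + \alpha = \alpha \cdot n = \alpha \cdot d_S(m, m').
\end{equation*}

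The ``in particular'' clause follows immediately by specialising to $m = m_0$ and using \cref{def:length}. I do not anticipate a genuine obstacle; the only subtle point is the translation from $\ball_S(1) \subseteq \ball_{S'}(\alpha)$ to $\ball_S(m, 1) \subseteq \ball_{S'}(m, \alpha)$, which hinges on \cref{cor:ball-centred-at-mzero-to-m} and the fact that $m \rightsemiaction \blank$ is a well-defined map.
\end{proof-idea}
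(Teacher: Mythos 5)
Your proposal is correct and follows essentially the same route as the paper: both translate the containment $\ball_S(1) \subseteq \ball_{S'}(\alpha)$ to balls centred at an arbitrary point (you via \cref{cor:ball-centred-at-mzero-to-m}, the paper via \cref{lem:left-action-and-balls} applied to $g \in G_{m_0,m}$, which amounts to the same thing) and then induct on $d_S(m,m')$ using \cref{lem:truncated-minmal-path-yields-minimal-path} and the triangle inequality for $d_{S'}$. Your explicit check that $\alpha$ is a well-defined finite integer is a small addition the paper leaves implicit.
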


  \begin{proof}
    For each $m \in M$, let $\alpha_m = \min\set{k \in \N_0 \suchthat \ball_S(m, 1) \subseteq \ball_{S'}(m, k)}$, in particular, $\alpha_{m_0} = \alpha$.

    \proofpart{Proof of: $\ForEach m \in M \Holds \alpha_m = \alpha$} Let $m \in M$, let $k \in \N_0$, and let $g \in G_{m_0, m}$. Then, because $g \leftaction \blank$ is bijective, $\ball_S(1) \subseteq \ball_{S'}(k)$ if and only if $g \leftaction \ball_S(1) \subseteq g \leftaction \ball_{S'}(k)$. Moreover, according to \cref{lem:left-action-and-balls}, we have $g \leftaction \ball_S(1) = \ball_S(m, 1)$ and $g \leftaction \ball_{S'}(k) = \ball_{S'}(m, k)$. Therefore, $\ball_S(1) \subseteq \ball_{S'}(k)$ if and only if $\ball_S(m, 1) \subseteq \ball_{S'}(m, k)$. In conclusion, $\alpha_m = \alpha$.

    Proof by induction on the distance, that is, proof by induction on $k$ of
    \begin{multline*}
      \ForEach k \in \N_0 \ForEach m \in M \ForEach m' \in M \Holds\\
          (d_S(m, m') = k \implies d_{S'}(m, m') \leq \alpha \cdot k).
    \end{multline*}

    \proofpart{Base Case}
      Let $k = 0$. Furthermore, let $m$ and $m' \in M$ such that $d_S(m, m') = k$. Then, $m = m'$. Hence, $d_{S'}(m, m') = 0$. Therefore, $d_{S'}(m, m') \leq \alpha \cdot k$.

    \proofpart{Inductive Step}
      Let $k \in \N_0$ such that
      \begin{equation*}
        \ForEach m \in M \ForEach m' \in M \Holds (d_S(m, m') = k \implies d_{S'}(m, m') \leq \alpha \cdot k).
      \end{equation*}
      Furthermore, let $m$ and $m'' \in M$ such that $d_S(m, m'') = k + 1$. Then, there is a $\family{s_i}_{i \in \set{1, 2, \dotsc, k + 1}} \subseteq S$ such that $m' \rightsemiaction s_{k + 1} = m''$, where $m' = (((m \rightsemiaction s_1) \rightsemiaction s_2) \rightsemiaction \dotsb) \rightsemiaction s_k$. And, according to \cref{lem:truncated-minmal-path-yields-minimal-path}, we have $d_S(m, m') = k$. Therefore, according to the inductive hypothesis, $d_{S'}(m, m') \leq \alpha \cdot k$. Moreover, by definition of $\alpha_{m'}$, we have $m'' = m' \rightsemiaction s_{k + 1} \in \ball_S(m', 1) \subseteq \ball_{S'}(m', \alpha_{m'})$. Hence, because $\alpha_{m'} = \alpha$, we have $d_{S'}(m', m'') \leq \alpha_{m'} = \alpha$. In conclusion, because $d_{S'}$ is subadditive, $d_{S'}(m, m'') \leq d_{S'}(m, m') + d_{S'}(m', m'') \leq \alpha \cdot k + \alpha = \alpha \cdot (k + 1)$.
  \end{proof}

  \begin{corollary}
  \label{cor:balls-and-generating-set}
    In the situation of \cref{lem:metric-and-generating-set}, for each element $m \in M$ and each non-negative integer $k \in \N_0$, we have $\ball_S(m, k) \subseteq \ball_{S'}(m, \alpha \cdot k)$.
  \end{corollary}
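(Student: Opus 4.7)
The plan is straightforward: this corollary is a direct translation of the metric inequality from \cref{lem:metric-and-generating-set} into a statement about balls, so it should require essentially no new ideas, only unfolding of definitions.

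I would fix an arbitrary element $m \in M$ and a non-negative integer $k \in \N_0$, and take any $m' \in \ball_S(m, k)$. By \cref{def:balls-and-spheres}, this means $d_S(m, m') \leq k$. Applying the inequality $d_{S'}(m, m') \leq \alpha \cdot d_S(m, m')$ supplied by \cref{lem:metric-and-generating-set} and monotonicity of multiplication by the non-negative integer $\alpha$, I obtain $d_{S'}(m, m') \leq \alpha \cdot k$, which by \cref{def:balls-and-spheres} is exactly the statement $m' \in \ball_{S'}(m, \alpha \cdot k)$. Since $m'$ was arbitrary in $\ball_S(m, k)$, the claimed inclusion follows.

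There is no real obstacle here; the work has already been done in the lemma, and the corollary is merely the geometric reformulation. No case distinction, induction, or appeal to the specifics of the semi-action $\rightsemiaction$ is needed beyond the metric inequality itself.
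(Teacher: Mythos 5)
Your proof is correct and is essentially identical to the paper's: both unfold the definition of the balls and apply the inequality $d_{S'}(m, m') \leq \alpha \cdot d_S(m, m')$ from \cref{lem:metric-and-generating-set} to conclude $d_{S'}(m, m') \leq \alpha \cdot k$. Nothing further is needed.
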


  \begin{proof}
    This is a direct consequence of \cref{lem:metric-and-generating-set}, because for each element $m \in M$, each non-negative integer $k \in \N_0$, and each element $m' \in M$, if $d_S(m, m') \leq k$, then $d_{S'}(m, m') \leq \alpha \cdot k$.
  \end{proof}

  \begin{corollary}
  \label{cor:growth-functions-and-generativng-set}
    In the situation of \cref{lem:metric-and-generating-set}, for each non-negative integer $k \in \N_0$, we have $\gamma_S(k) \leq \gamma_{S'}(\alpha \cdot k)$.
  \end{corollary}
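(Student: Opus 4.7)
The plan is to derive this corollary directly from \cref{cor:balls-and-generating-set}, which supplies the set-theoretic inclusion between balls in the two metrics. The growth function $\gamma_S$ is, by \cref{def:growth-fnct-of-cell-space}, just the cardinality of the ball of radius $k$ centred at $m_0$, so passing from an inclusion of balls to an inequality of cardinalities is immediate.

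Concretely, I would fix $k \in \N_0$ and specialise \cref{cor:balls-and-generating-set} at the point $m = m_0$, obtaining
\begin{equation*}
  \ball_S(k) = \ball_S(m_0, k) \subseteq \ball_{S'}(m_0, \alpha \cdot k) = \ball_{S'}(\alpha \cdot k).
\end{equation*}
Taking cardinalities on both sides and using the definition of the $S$- and $S'$-growth functions then yields
\begin{equation*}
  \gamma_S(k) = \abs{\ball_S(k)} \leq \abs{\ball_{S'}(\alpha \cdot k)} = \gamma_{S'}(\alpha \cdot k),
\end{equation*}
which is exactly the claim.

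There is no real obstacle here; the work was already carried out in \cref{lem:metric-and-generating-set} and \cref{cor:balls-and-generating-set}. The only thing worth noting in the write-up is that monotonicity of cardinality under inclusion is used tacitly, and that the specialisation to $m_0$ is legitimate because \cref{cor:balls-and-generating-set} is stated for every $m \in M$. The corollary is essentially a bookkeeping step that will later be combined with \cref{lem:metric-and-generating-set} to show that the equivalence class $\equivclass{\gamma_S}_\equivalent$ is independent of the choice of finite symmetric right generating set $S$.
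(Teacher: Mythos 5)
Your proof is correct and is exactly the argument the paper intends: the paper also derives the corollary directly from \cref{cor:balls-and-generating-set} by specialising to $m_0$ and taking cardinalities. You have merely spelled out the (entirely routine) details that the paper leaves implicit.
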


  \begin{proof}
    This is a direct consequence of \cref{cor:balls-and-generating-set}.
  \end{proof}

  \begin{definition} 
    Let $X$ be a set, and let $d$ and $d'$ be metrics on $X$. The metrics $d$ and $d'$ are called \define{Lipschitz equivalent}\graffito{Lipschitz equivalent} if and only if there are positive real numbers $\kappa$ and $\varkappa$ such that $\kappa \cdot d \leq d' \leq \varkappa \cdot d$. 
  \end{definition}

  \begin{corollary} 
    Let $S'$ be a finite and symmetric right generating set of $\mathcal{R}$. The metrics $d_S$ and $d_{S'}$ are Lipschitz equivalent.
  \end{corollary}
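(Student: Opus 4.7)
The plan is to invoke \cref{lem:metric-and-generating-set} twice, once with $(S, S')$ in the role given and once with $S$ and $S'$ swapped, exploiting the fact that $S$ and $S'$ play symmetric roles in the hypothesis of that lemma. The direct application yields a non-negative integer $\alpha \in \N_0$ (namely $\min\set{k \in \N_0 \suchthat \ball_S(1) \subseteq \ball_{S'}(k)}$) with $d_{S'}(m, m') \leq \alpha \cdot d_S(m, m')$ for all $m, m' \in M$; swapping the roles produces a non-negative integer $\alpha'$ with $d_S(m, m') \leq \alpha' \cdot d_{S'}(m, m')$ on all of $M \times M$. Rearranging gives $(1 / \alpha') \cdot d_S \leq d_{S'} \leq \alpha \cdot d_S$, so taking $\kappa = 1 / \alpha'$ and $\varkappa = \alpha$ yields the required Lipschitz equivalence.

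The only point requiring care will be positivity of $\kappa$ and $\varkappa$, i.e.\ ensuring $\alpha, \alpha' \geq 1$. I would handle this by observing that $\alpha = 0$ forces $\ball_S(1) \subseteq \ball_{S'}(0) = \set{m_0}$, hence $m_0 \rightsemiaction s = m_0$ for every $s \in S$; since $\rightsemiaction$ is free and $S$ right-generates $\mathcal{R}$, this collapses $M$ to $\set{m_0}$, in which case both metrics are identically zero and any positive constants satisfy the definition trivially. A symmetric argument disposes of $\alpha' = 0$. Outside this degenerate case both constants are at least $1$ and the choice above works verbatim. No step presents a genuine obstacle; the substantive content is already packaged in \cref{lem:metric-and-generating-set}, and the corollary is essentially just a symmetrisation of it.
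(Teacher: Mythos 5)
Your proof is correct and follows essentially the same route as the paper's: apply \cref{lem:metric-and-generating-set} in both directions to obtain the two constants, and dispose of the degenerate case $\alpha = 0$ or $\alpha' = 0$ by noting it forces $M = \set{m_0}$, where both metrics vanish. (The appeal to freeness of $\rightsemiaction$ in the degenerate case is superfluous --- the generating property alone collapses $M$ --- but this does not affect correctness.)
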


  \begin{proof}
    Let $\alpha = \min\set{k \in \N_0 \suchthat \ball_S(1) \subseteq \ball_{S'}(k)}$ and let $\alpha' = \min\set{k \in \N_0 \suchthat \ball_{S'}(1) \subseteq \ball_S(k)}$. If $\alpha = 0$ or $\alpha' = 0$, then $M = \set{m_0}$, hence $d_S = \nullityfnc = d_{S'}$, and therefore $d_S \leq d_{S'} \leq d_S$. Otherwise, according to \cref{lem:metric-and-generating-set}, we have $\frac{1}{\alpha} \cdot d_{S'} \leq d_S \leq \alpha' \cdot d_{S'}$. 
  \end{proof}

  \begin{corollary} 
  \label{cor:growth-functions-independent-of-generating-set-and-gamma-dominated-by-exp}
    Let $S'$ be a finite and symmetric right generating set of $\mathcal{R}$. The $S$-growth function $\gamma_S$ of $\mathcal{R}$ and the $S'$-growth function $\gamma_{S'}$ of $\mathcal{R}$ are equivalent.
  \end{corollary}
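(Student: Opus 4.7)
The plan is to invoke Corollary~\ref{cor:growth-functions-and-generativng-set} twice, once for each of the two generating sets playing the role of the \enquote{new} set, and then combine the resulting estimates with the monotonicity of the growth functions recorded in Remark~\ref{rem:growth-function-of-cell-space} to verify the definition of $\equivalent$.

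Concretely, I would first set $\alpha = \min\set{k \in \N_0 \suchthat \ball_S(1) \subseteq \ball_{S'}(k)}$ and $\alpha' = \min\set{k \in \N_0 \suchthat \ball_{S'}(1) \subseteq \ball_S(k)}$; these are well-defined non-negative integers since both $S$ and $S'$ right generate $\mathcal{R}$ and every element of the finite set $\ball_S(1)$ (respectively $\ball_{S'}(1)$) lies in some $\ball_{S'}(k)$ (respectively $\ball_S(k)$) by Remark~\ref{rem:ball-of-radius-0-contains-one-element-and-sequence-of-balls-is-monotonic}. Corollary~\ref{cor:growth-functions-and-generativng-set} applied in the two directions then yields
\begin{equation*}
  \ForEach k \in \N_0 \Holds \gamma_S(k) \leq \gamma_{S'}(\alpha \cdot k) \quad\text{and}\quad \gamma_{S'}(k) \leq \gamma_S(\alpha' \cdot k).
\end{equation*}

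Next, set $\beta = \max\set{\alpha, \alpha', 1} \in \N_+$. Since the sequences $\sequence{\gamma_S(k)}_{k \in \N_0}$ and $\sequence{\gamma_{S'}(k)}_{k \in \N_0}$ are non-decreasing and take values in $\N_+$, the two estimates above upgrade to
\begin{equation*}
  \beta \cdot \gamma_{S'}(\beta \cdot k) \geq \gamma_{S'}(\alpha \cdot k) \geq \gamma_S(k) \quad\text{and}\quad \beta \cdot \gamma_S(\beta \cdot k) \geq \gamma_S(\alpha' \cdot k) \geq \gamma_{S'}(k)
\end{equation*}
for every $k \in \N_+$. By the definition of $\dominates$ with witnessing constant $\beta$ this gives $\gamma_{S'} \dominates \gamma_S$ and $\gamma_S \dominates \gamma_{S'}$, hence $\gamma_S \equivalent \gamma_{S'}$.

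The only subtlety worth mentioning is the degenerate case $\alpha = 0$ or $\alpha' = 0$, which forces $\ball_S(1) = \set{m_0}$ or $\ball_{S'}(1) = \set{m_0}$ and hence $M = \set{m_0}$ by the defining property of a right generating set; here both growth functions are identically $1$ and the claim is trivial. The forcing of $\beta \geq 1$ in the definition of $\beta$ above already absorbs this case, so no separate argument is needed, and there is no real obstacle beyond carefully recording the monotonicity step.
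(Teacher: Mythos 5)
Your proposal is correct and takes essentially the same route as the paper: both apply \cref{cor:growth-functions-and-generativng-set} in each direction and then pad the constant to a positive integer using the monotonicity of the growth functions recorded in \cref{rem:growth-function-of-cell-space} (the paper uses $\alpha + 1$ where you use $\max\set{\alpha, \alpha', 1}$, which is an immaterial difference). Your remark on the degenerate case $\alpha = 0$ is sound but, as you note, already absorbed by the choice of constant, exactly as in the paper.
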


  \begin{proof}
    According to \cref{cor:growth-functions-and-generativng-set}, there is a $\alpha \in \N_0$ such that, for each $k \in \N_0$, we have $\gamma_S(k) \leq \gamma_{S'}(\alpha \cdot k)$. Hence, according to \cref{rem:growth-function-of-cell-space}, for each $k \in \N_0$, we have $\gamma_S(k) \leq (\alpha + 1) \gamma_{S'}((\alpha + 1) \cdot k)$. Therefore, $\gamma_S$ is dominated by $\gamma_{S'}$. Switching roles of $S$ and $S'$ yields that $\gamma_{S'}$ is dominated by $\gamma_S$. In conclusion, $\gamma_S$ and $\gamma_{S'}$ are equivalent.
  \end{proof}

  \begin{corollary}
  \label{cor:growth-function-dominated-by-exp}
    The $S$-growth function $\gamma_S$ of $\mathcal{R}$ is dominated by $\exp$.
  \end{corollary}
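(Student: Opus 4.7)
The plan is to bootstrap directly off the explicit bound already recorded in Remark \ref{rem:growth-function-of-cell-space} and the equivalence class of exponential functions established in Example \ref{ex:growth-functions}. No new combinatorial argument about $\mathcal{R}$ is required; this is really just a transitivity step.

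First I would invoke Remark \ref{rem:growth-function-of-cell-space} to get, for every $k \in \N_0$, the inequality $\gamma_S(k) \leq (1 + \abs{S})^k$. This alone shows that the growth function $[k \mapsto (1 + \abs{S})^k]$ dominates $\gamma_S$, witnessed by the constant $\alpha = 1$ in the definition of $\dominates$.

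Next I would distinguish two cases according to whether $\abs{S} = 0$ or $\abs{S} \geq 1$. In the trivial case $\abs{S} = 0$, the right generating condition forces $M = \set{m_0}$, so $\gamma_S \equivalent \unityfnc$, and $\exp \dominates \unityfnc \dominates \gamma_S$ is immediate. In the generic case $\abs{S} \geq 1$ we have $1 + \abs{S} \in \R_{>1}$, so item \ref{it:growth-functions:exponential-growth} of Example \ref{ex:growth-functions} yields $[k \mapsto (1 + \abs{S})^k] \equivalent \exp$. Combining this with the previous step, transitivity of $\dominates$ delivers $\exp \dominates \gamma_S$.

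There is no real obstacle here: the bound $\gamma_S(k) \leq (1 + \abs{S})^k$ has already been established (coming from the surjection $(\set{G_0} \cup S)^\rho \twoheadrightarrow \ball_S(m,\rho)$ in Remark \ref{rem:upper-bound-for-cardinality-of-balls}), and the equivalence $[k \mapsto r^k] \equivalent \exp$ for $r > 1$ has already been proved in Example \ref{ex:growth-functions}. The only thing to be careful about is the edge case $\abs{S} = 0$, which is why it is worth splitting cases rather than blindly applying the equivalence with base $1 + \abs{S}$.
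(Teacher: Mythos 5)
Your proof is correct and follows essentially the same route as the paper: bound $\gamma_S(k)$ by $(1+\abs{S})^k$ via \cref{rem:growth-function-of-cell-space}, invoke \cref{it:growth-functions:exponential-growth} of \cref{ex:growth-functions} to get $[k \mapsto (1+\abs{S})^k] \equivalent \exp$, and conclude by transitivity. Your extra case split for $\abs{S} = 0$ (where $1+\abs{S} = 1$ falls outside the hypothesis $r > 1$ of that example) is a small point of care the paper's proof glosses over, but it changes nothing essential.
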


  \begin{proof}
    According to \cref{rem:growth-function-of-cell-space}, for each $k \in \N_0$, we have $\gamma_S(k) \leq r^k$, where $r = 1 + \abs{S}$. Hence, $\gamma_S \dominatedby [k \mapsto r^k]$. Moreover, according to \cref{it:growth-functions:exponential-growth} of \cref{ex:growth-functions}, we have $[k \mapsto r^k] \equivalent \exp$. In conclusion, $\gamma_S \dominatedby \exp$.
  \end{proof}

  \begin{definition}
  \label{def:growth-type-of-cell-space}
    The equivalence class $\gamma(\mathcal{R}) = \equivclass{\gamma_S}_\equivalent$ is called \define{growth type of $\mathcal{R}$}\graffito{growth type $\gamma(\mathcal{R})$ of $\mathcal{R}$}.
  \end{definition}

  \begin{lemma}[{\cite[Proposition~6.4.6]{ceccherini-silberstein:coornaert:2010}}] 
  \label{lem:growth-function-equivalent-to-unity-iff-bounded}
    Let $\gamma$ be a growth function such that $\gamma(0) > 0$. Then, $\gamma$ is equivalent to $\unityfnc$ if and only if $\gamma$ is bounded.
  \end{lemma}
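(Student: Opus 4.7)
The plan is to prove the two implications of the equivalence separately, unpacking the definition of $\dominates$ in each direction. Throughout, the key facts I will use are that $\unityfnc$ is identically $1$, that $\gamma$ is non-decreasing, and the numerical hypothesis $\gamma(0) > 0$.

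For the forward direction, suppose $\gamma \equivalent \unityfnc$. In particular $\unityfnc \dominates \gamma$, so there is some $\alpha \in \N_+$ with $\alpha \cdot \unityfnc(\alpha k) \geq \gamma(k)$ for every $k \in \N_+$. Since $\unityfnc(\alpha k) = 1$, this gives $\gamma(k) \leq \alpha$ for every $k \in \N_+$, and monotonicity extends the bound to $k = 0$ via $\gamma(0) \leq \gamma(1) \leq \alpha$. Hence $\gamma$ is bounded (by $\alpha$).

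For the reverse direction, suppose $\gamma$ is bounded, say $\gamma(k) \leq B$ for all $k \in \N_0$. To get $\unityfnc \dominates \gamma$, I choose any integer $\alpha \geq B$; then $\alpha \cdot \unityfnc(\alpha k) = \alpha \geq B \geq \gamma(k)$ for all $k \in \N_+$. To get $\gamma \dominates \unityfnc$, I use $\gamma(0) > 0$: monotonicity gives $\gamma(\alpha k) \geq \gamma(0) > 0$ for every $\alpha, k \in \N_+$, so picking $\alpha \in \N_+$ with $\alpha \geq 1/\gamma(0)$ (for instance $\alpha = \lceil 1/\gamma(0) \rceil$) yields $\alpha \cdot \gamma(\alpha k) \geq \alpha \cdot \gamma(0) \geq 1 = \unityfnc(k)$.

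There is no real obstacle; the argument is a direct unpacking of the definitions. The only point worth flagging is that the hypothesis $\gamma(0) > 0$ is genuinely needed for the half $\gamma \dominates \unityfnc$ — the identically zero function is bounded but cannot dominate $\unityfnc$, so this assumption cannot be dropped.
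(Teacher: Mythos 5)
Your proof is correct and is essentially the standard argument for this fact; the paper itself gives no proof, deferring entirely to Proposition~6.4.6 of Ceccherini-Silberstein and Coornaert, and your direct unpacking of the definition of $\dominates$ (including the correct observation that $\gamma(0) > 0$ is what makes $\gamma \dominates \unityfnc$ work) is exactly what that reference does. No gaps.
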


  \begin{corollary} 
  \label{cor:m-finite-iff-growth-type-of-m-constant}
    The set $M$ is finite if and only if the growth types $\gamma(\mathcal{R})$ and $\equivclass{\unityfnc}_\equivalent$ are equal. 
  \end{corollary}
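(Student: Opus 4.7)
The plan is to reduce both directions to the preceding Lemma (growth function with $\gamma(0)>0$ is equivalent to $\unityfnc$ iff bounded), combined with the monotonicity of the ball sequence established in \cref{rem:ball-of-radius-0-contains-one-element-and-sequence-of-balls-is-monotonic}.

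For the forward direction, I would fix any finite symmetric right generating set $S$ (such exists by hypothesis of this section). If $M$ is finite, then trivially $\gamma_S(k) = \abs{\ball_S(k)} \leq \abs{M}$ for every $k \in \N_0$, so $\gamma_S$ is bounded. Since $\gamma_S(0) = 1 > 0$ by \cref{rem:growth-function-of-cell-space}, \cref{lem:growth-function-equivalent-to-unity-iff-bounded} yields $\gamma_S \equivalent \unityfnc$, and hence $\gamma(\mathcal{R}) = \equivclass{\gamma_S}_\equivalent = \equivclass{\unityfnc}_\equivalent$.

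For the converse, suppose $\gamma(\mathcal{R}) = \equivclass{\unityfnc}_\equivalent$, so $\gamma_S \equivalent \unityfnc$. Again $\gamma_S(0) = 1 > 0$, so \cref{lem:growth-function-equivalent-to-unity-iff-bounded} gives that $\gamma_S$ is bounded by some $N \in \N_0$. Now I invoke \cref{rem:ball-of-radius-0-contains-one-element-and-sequence-of-balls-is-monotonic}: the sequence $\sequence{\ball_S(k)}_{k \in \N_0}$ is non-decreasing with respect to inclusion and its union is all of $M$. Since each cardinality is bounded by $N$, the non-decreasing sequence of finite sets must stabilise at some index $k_0$, and the stable value is $M$. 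Therefore $\abs{M} \leq N < \infty$.

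I do not expect any real obstacle here: the entire content is that boundedness of the growth function transfers to finiteness of $M$ via the exhaustion of $M$ by the ascending ball sequence. The only thing to be mildly careful about is to invoke monotonicity and exhaustion (not just that each individual ball is finite) when passing from $\gamma_S$ bounded to $M$ finite, since $M$ need not a priori be finite even when all balls are.
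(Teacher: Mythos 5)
Your proposal is correct and follows essentially the same route as the paper: the converse direction is verbatim the paper's argument (boundedness via \cref{lem:growth-function-equivalent-to-unity-iff-bounded}, then exhaustion of $M$ by the non-decreasing ball sequence), and for the forward direction the paper merely exhibits the domination inequality $\gamma_S(k) \leq \abs{M} \cdot \unityfnc(\abs{M}\cdot k)$ directly where you route the same boundedness observation through the lemma.
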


  \begin{proof}
    First, let $M$ be finite. Then, for each $k \in \N_0$, we have $\gamma_S(k) \leq \abs{M} = \abs{M} \cdot \unityfnc(\abs{M} \cdot k)$. 

    Secondly, let $\gamma(\mathcal{R}) = \equivclass{\unityfnc}_\equivalent$. Then, according to \cref{lem:growth-function-equivalent-to-unity-iff-bounded}, $\gamma_S$ is bounded by some $\xi \in \R_{> 0}$. Therefore, because $M = \bigcup_{k \in \N_0} \ball_S(k)$, $\sequence{\ball_S(k)}_{k \in \N_0}$ is non-decreasing with respect to $\subseteq$, and $\sequence{\gamma_S(k)}_{k \in \N_0} = \sequence{\abs{\ball_S(k)}}_{k \in \N_0}$, we have $\abs{M} \leq \sup_{k \in \N_0} \gamma_S(k) \leq \xi$. In conclusion, $M$ is finite.
  \end{proof} 

  \begin{lemma} 
  \label{lem:ball-sequence-strictly-increasing-or-eventually-constant}
    Either the sequence $\sequence{\ball_S(k)}_{k \in \N_0}$ is strictly increasing with respect to $\subseteq$ or \define{eventually constant}\graffito{eventually constant}, that is to say, that there is a non-negative integer $k \in \N_0$ such that, for each non-negative integer $k' \in \N_0$ with $k' \geq k$, we have $\ball_S(k') = \ball_S(k)$.
  \end{lemma}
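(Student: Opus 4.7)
The plan is to use the dichotomy: either the non-decreasing sequence $\sequence{\ball_S(k)}_{k \in \N_0}$ is strictly increasing at every step, or there exists some first (or any) index $k \in \N_0$ where it fails to grow, i.e.\ $\ball_S(k + 1) = \ball_S(k)$. In the first case there is nothing to prove. In the second case I will show, by induction on $k' \geq k$, that $\ball_S(k') = \ball_S(k)$, which is exactly the definition of eventually constant.

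For the induction, the base case $k' = k$ is trivial. For the inductive step, assume $\ball_S(k') = \ball_S(k)$. The inclusion $\ball_S(k) \subseteq \ball_S(k' + 1)$ follows from \cref{rem:ball-of-radius-0-contains-one-element-and-sequence-of-balls-is-monotonic}. For the reverse, I invoke \cref{cor:liberation-of-balls-yields-bigger-one} twice: it gives
\begin{equation*}
  \ball_S(k' + 1) = \ball_S(k') \rightsemiaction \ball_S(1) = \ball_S(k) \rightsemiaction \ball_S(1) = \ball_S(k + 1) = \ball_S(k),
\end{equation*}
where the middle equality uses the inductive hypothesis and the final one uses the hypothesis of the case.

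The key insight is that the semi-action identity $\ball_S(\rho) \rightsemiaction \ball_S(\rho') = \ball_S(\rho + \rho')$ propagates any stagnation of the sequence of balls: once one extra step of radius fails to add new points, no further step can, because each ball of larger radius is obtained by semi-acting $\ball_S(1)$ on the previous one. There is no real obstacle; the argument is routine once one identifies the correct propagation lemma, and the dichotomy formulation handles the two cases cleanly.
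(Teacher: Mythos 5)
Your proof is correct and follows the paper's overall structure exactly: the same dichotomy (strictly increasing versus a first index where $\ball_S(k) = \ball_S(k+1)$) followed by the same induction on $k' \geq k$. The only difference is in the inductive step: the paper argues element-wise, taking $m \in \ball_S(k'+1)$, splitting into the cases $m \in \ball_S(k')$ and $m \notin \ball_S(k')$, and using \cref{lem:ball-liberation-included-in-ball-one-larger} to push $m$ into $\ball_S(k+1)$; you instead invoke the set-level identity $\ball_S(\rho) \rightsemiaction \ball_S(\rho') = \ball_S(\rho + \rho')$ from \cref{cor:liberation-of-balls-yields-bigger-one}, which collapses the whole step into the one-line chain $\ball_S(k'+1) = \ball_S(k') \rightsemiaction \ball_S(1) = \ball_S(k) \rightsemiaction \ball_S(1) = \ball_S(k+1) = \ball_S(k)$. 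This is cleaner and avoids the case analysis; the only thing you should make explicit is the identification of $M$ with $G \quotient G_0$ by $[m \mapsto G_{m_0,m}]$, since \cref{cor:liberation-of-balls-yields-bigger-one} is stated under that identification and the expression $\ball_S(k') \rightsemiaction \ball_S(1)$ only makes sense with it. With that caveat, the argument is complete and non-circular, as the corollary is established well before this lemma.
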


  \begin{proof} 
    According to \cref{rem:ball-of-radius-0-contains-one-element-and-sequence-of-balls-is-monotonic}, the sequence $\sequence{\ball_S(k)}_{k \in \N_0}$ is non-decreasing with respect to $\subseteq$. If it is strictly increasing with respect to $\subseteq$, it is not eventually constant. Otherwise, there is a $k \in \N_0$ such that $\ball_S(k) = \ball_S(k + 1)$. We proof by induction on $k'$ that, for each $k' \in \N_0$ with $k' \geq k$, we have $\ball_S(k') = \ball_S(k)$.

    \proofpart{Base Case}
      Let $k' = k$. Then, $\ball_S(k') = \ball_S(k)$.

    \proofpart{Inductive Step} 
      Let $k' \in \N_0$ with $k' \geq k$ such that $\ball_S(k') = \ball_S(k)$. Furthermore, let $m \in \ball_S(k' + 1)$.
      \begin{description}
        \item[Case 1] $m \in \ball_S(k')$. Then, according to the inductive hypothesis, $m \in \ball_S(k)$.
        \item[Case 2] $m \notin \ball_S(k')$. Then, there is a $\family{s_i}_{i \in \set{1, 2, \dotsc, k' + 1}} \subseteq S$ such that $m' \rightsemiaction s_{k' + 1} = m$, where $m' = (((m_0 \rightsemiaction s_1) \rightsemiaction s_2) \rightsemiaction \dotsb) \rightsemiaction s_{k'}$. Hence, $m' \in \ball_S(k')$ and thus, according to the inductive hypothesis, $m' \in \ball_S(k)$. Therefore, according to \cref{lem:ball-liberation-included-in-ball-one-larger}, we have $m \in \ball_S(k + 1)$. Thus, because $\ball_S(k + 1) = \ball_S(k)$, we have $m \in \ball_S(k)$.
      \end{description}
      In either case, $m \in \ball_S(k)$. Therefore, $\ball_S(k' + 1) \subseteq \ball_S(k) \subseteq \ball_S(k') \subseteq \ball_S(k' + 1)$. In conclusion, $\ball_S(k' + 1) = \ball_S(k)$.

    In conclusion, $\sequence{\ball_S(k)}_{k \in \N_0}$ is eventually constant.
  \end{proof}

  \begin{corollary}
  \label{cor:m-infinite-iff-balls-strictly-increasing}
    The set $M$ is infinite if and only if the sequence $\sequence{\ball_S(k)}_{k \in \N_0}$ is strictly increasing with respect to $\subseteq$.
  \end{corollary}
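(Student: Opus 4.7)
The plan is to derive this corollary directly from \cref{lem:ball-sequence-strictly-increasing-or-eventually-constant}, which gives a dichotomy: the sequence $\sequence{\ball_S(k)}_{k \in \N_0}$ is either strictly increasing with respect to $\subseteq$ or eventually constant. Under this dichotomy, the equivalence to be proved reduces to showing that $M$ is finite if and only if the sequence is eventually constant.

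First I would handle the easier direction. Suppose $\sequence{\ball_S(k)}_{k \in \N_0}$ is strictly increasing with respect to $\subseteq$. Then for each $k \in \N_0$ we have $\abs{\ball_S(k + 1)} \geq \abs{\ball_S(k)} + 1$, so $\abs{\ball_S(k)} \geq k + 1$ by induction, starting from $\abs{\ball_S(0)} = 1$ (as recorded in \cref{rem:ball-of-radius-0-contains-one-element-and-sequence-of-balls-is-monotonic}). Since every $\ball_S(k) \subseteq M$, this forces $M$ to be infinite.

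For the converse, suppose $M$ is infinite. By \cref{lem:ball-sequence-strictly-increasing-or-eventually-constant} it suffices to rule out eventual constancy. If there were a $k \in \N_0$ with $\ball_S(k') = \ball_S(k)$ for all $k' \geq k$, then using \cref{rem:ball-of-radius-0-contains-one-element-and-sequence-of-balls-is-monotonic}, which gives $M = \bigcup_{\rho' \geq k} \ball_S(m_0, \rho')$, we would conclude $M = \ball_S(k)$. But $\ball_S(k)$ is finite by \cref{rem:upper-bound-for-cardinality-of-balls} (its cardinality is bounded by $(1 + \abs{S})^k$, which is finite since $S$ is assumed finite in this section), contradicting the infiniteness of $M$. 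Hence the sequence is not eventually constant, and the dichotomy forces it to be strictly increasing.

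There is no real obstacle here: the entire statement is a bookkeeping consequence of the preceding lemma, the fact that balls exhaust $M$, and the finiteness of each ball under a finite symmetric right generating set. The only point worth stating explicitly in the write-up is where the finiteness of $\ball_S(k)$ comes from, so that the contradiction in the converse direction is clean.
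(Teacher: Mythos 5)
Your argument is correct and follows essentially the same route as the paper: both directions rest on the dichotomy of \cref{lem:ball-sequence-strictly-increasing-or-eventually-constant}, with eventual constancy ruled out for infinite $M$ by combining $M = \bigcup_{k' \geq k} \ball_S(k')$ with the finiteness of each ball from \cref{rem:upper-bound-for-cardinality-of-balls}. The only cosmetic difference is that for the easy direction you count cardinalities explicitly ($\abs{\ball_S(k)} \geq k + 1$), whereas the paper simply notes that $M$ is the union of a strictly increasing chain; both are fine.
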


  \begin{proof}
    First, let $M$ be infinite. Suppose that $\sequence{\ball_S(k)}_{k \in \N_0}$ is eventually constant. Then, there is a $k \in \N_0$ such that, for each $k' \in \N_0$ with $k' \geq k$, we have $\ball_S(k') = \ball_S(k)$. Hence, according to \cref{rem:ball-of-radius-0-contains-one-element-and-sequence-of-balls-is-monotonic}, we have $M = \bigcup_{k' \in \N_0, k' \geq k} \ball_S(k') = \ball_S(k)$ and therefore, according to \cref{rem:upper-bound-for-cardinality-of-balls}, the set $M$ is finite, which contradicts the precondition that $M$ is infinite. Thus, $\sequence{\ball_S(k)}_{k \in \N_0}$ is not eventually constant. In conclusion, according to \cref{lem:ball-sequence-strictly-increasing-or-eventually-constant}, the sequence $\sequence{\ball_S(k)}_{k \in \N_0}$ is strictly increasing with respect to $\subseteq$.

    Secondly, let $\sequence{\ball_S(k)}_{k \in \N_0}$ be strictly increasing with respect to $\subseteq$. Then, because $M = \bigcup_{k \in \N_0} \ball_S(k)$, the set $M$ is infinite. 
  \end{proof}

  \begin{corollary}
  \label{cor:m-infinite-iff-spheres-are-non-empty}
    The set $M$ is infinite if and only if
    \begin{equation}
    \label{eq:m-infinite-iff-spheres-are-non-empty}
      \ForEach \rho \in \N_0 \Holds \sphere_S(\rho) \neq \emptyset. \qedhere
    \end{equation}
  \end{corollary}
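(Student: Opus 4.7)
The plan is to reduce this corollary directly to \cref{cor:m-infinite-iff-balls-strictly-increasing}, which characterises infiniteness of $M$ in terms of strict monotonicity of the ball sequence $\sequence{\ball_S(k)}_{k \in \N_0}$, together with the identification of spheres as successive differences of balls provided by \cref{rem:spheres-expressed-in-terms-of-balls}.

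For the forward implication, I would assume $M$ infinite and apply \cref{cor:m-infinite-iff-balls-strictly-increasing} to obtain that $\ball_S(\rho - 1) \subsetneq \ball_S(\rho)$ for every $\rho \in \N_+$. By \cref{rem:spheres-expressed-in-terms-of-balls}, this gives $\sphere_S(\rho) = \ball_S(\rho) \smallsetminus \ball_S(\rho - 1) \neq \emptyset$. The remaining case $\rho = 0$ is immediate since $\sphere_S(0) = \ball_S(0) = \set{m_0}$.

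For the reverse implication, I would argue by contraposition. If $M$ is finite, then by \cref{cor:m-infinite-iff-balls-strictly-increasing} the sequence $\sequence{\ball_S(k)}_{k \in \N_0}$ is not strictly increasing with respect to $\subseteq$, so there is some $\rho \in \N_+$ with $\ball_S(\rho) = \ball_S(\rho - 1)$, and thus $\sphere_S(\rho) = \emptyset$, violating \cref{eq:m-infinite-iff-spheres-are-non-empty}. No step is really an obstacle here; the work was already done in the preceding lemma and corollary, and this statement is just a repackaging of them via the elementary identity relating spheres and consecutive balls.
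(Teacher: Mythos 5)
Your proposal is correct and follows essentially the same route as the paper: both reduce the statement to \cref{cor:m-infinite-iff-balls-strictly-increasing} via the identity $\sphere_S(\rho) = \ball_S(\rho) \smallsetminus \ball_S(\rho - 1)$ from \cref{rem:spheres-expressed-in-terms-of-balls}, handling $\rho = 0$ separately. The only cosmetic difference is that you split the equivalence into two implications (using contraposition for the converse), whereas the paper states it as a single chain of equivalences.
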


  \begin{proof}
    We have $\sphere_S(0) = \set{m_0} \neq \emptyset$. And, according to \cref{rem:spheres-expressed-in-terms-of-balls}, for each $\rho \in \N_+$, we have $\sphere_S(\rho) = \ball_S(\rho) \smallsetminus \ball_S(\rho - 1)$. Hence, $\sequence{\ball_S(k)}_{k \in \N_0}$ is strictly increasing with respect to $\subseteq$ if and only if \cref{eq:m-infinite-iff-spheres-are-non-empty} holds. Therefore, according to \cref{cor:m-infinite-iff-balls-strictly-increasing}, the set $M$ is infinite if and only if \cref{eq:m-infinite-iff-spheres-are-non-empty} holds.
  \end{proof}

  \begin{lemma} 
    The set $M$ is infinite if and only if the growth type of $\mathcal{R}$ dominates $\equivclass{k \mapsto k}_\equivalent$.
  \end{lemma}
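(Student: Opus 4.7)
The plan is to prove the two directions separately, with both following quickly from structural facts already established in the excerpt. For the forward direction, suppose $M$ is infinite. By \cref{cor:m-infinite-iff-spheres-are-non-empty}, every sphere $\sphere_S(\rho)$ is non-empty, and by \cref{rem:spheres-expressed-in-terms-of-balls} the ball $\ball_S(k)$ decomposes as the disjoint union $\sphere_S(0) \cupdot \sphere_S(1) \cupdot \dotsb \cupdot \sphere_S(k)$ of $k+1$ non-empty sets. Hence $\gamma_S(k) = \abs{\ball_S(k)} \geq k + 1$ for every $k \in \N_0$. Taking $\alpha = 1$ in the definition of $\dominates$ gives $\alpha \cdot \gamma_S(\alpha \cdot k) = \gamma_S(k) \geq k$ for every $k \in \N_+$, so $\gamma_S \dominates [k \mapsto k]$, and therefore $\gamma(\mathcal{R}) = \equivclass{\gamma_S}_\equivalent$ dominates $\equivclass{k \mapsto k}_\equivalent$.

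For the reverse direction, I would argue by contrapositive. Assume $M$ is finite. Then \cref{cor:m-finite-iff-growth-type-of-m-constant} yields $\gamma(\mathcal{R}) = \equivclass{\unityfnc}_\equivalent$. Suppose, for contradiction, that $\gamma(\mathcal{R})$ dominates $\equivclass{k \mapsto k}_\equivalent$. Unpacking the definition, there exist $\gamma \equivalent \unityfnc$ and $\gamma' \equivalent [k \mapsto k]$ with $\gamma \dominates \gamma'$; the congruence of $\dominates$ with respect to $\equivalent$ (the third clause of the unlabelled lemma following the definition of equivalence) then forces $\unityfnc \dominates [k \mapsto k]$. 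Combined with the already-known fact that $[k \mapsto k] \dominates \unityfnc$ from \cref{it:growth-functions:id-vs-unity} of \cref{ex:growth-functions}, this would give $\unityfnc \equivalent [k \mapsto k]$, contradicting the second half of the very same item. Hence $\gamma(\mathcal{R})$ does not dominate $\equivclass{k \mapsto k}_\equivalent$, proving the contrapositive.

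There is no genuine obstacle: both directions are bookkeeping. The only mild subtlety is to invoke the equivalence-congruence of $\dominates$ in the reverse direction so that we may pass freely between representatives of a growth type, which is precisely what makes the existential definition of growth-type dominance behave as a well-defined relation.
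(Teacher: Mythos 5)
Your proposal is correct and follows essentially the same route as the paper: the forward direction derives $\gamma_S(k) \geq k+1$ from the infinitude of $M$ (you via non-empty spheres, the paper via the strictly increasing ball sequence, which are interchangeable), and the reverse direction reduces to $\gamma(\mathcal{R}) = \equivclass{\unityfnc}_\equivalent$ and the non-equivalence of $\unityfnc$ and $[k \mapsto k]$. Your explicit invocation of the congruence of $\dominates$ with $\equivalent$ merely spells out a step the paper leaves implicit.
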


  \begin{proof}
    First, let $M$ be infinite. Then, according to \cref{cor:m-infinite-iff-balls-strictly-increasing}, the sequence $\sequence{\ball_S(k)}_{k \in \N_0}$ is strictly increasing with respect to $\subseteq$. Hence, because $\ball_S(0) = \set{m_0}$, for each $k \in \N_0$, we have $\gamma_S(k) = \abs{\ball_S(k)} \geq k + 1$. In conclusion, $\gamma_S$ dominates $[k \mapsto k]$ and hence $\gamma(\mathcal{R})$ dominates $\equivclass{k \mapsto k}_\equivalent$.

    Secondly, let $M$ be finite. Then, according to \cref{cor:m-finite-iff-growth-type-of-m-constant}, we have $\mathcal{R} = \equivclass{\unityfnc}_\equivalent$. Hence, according to \cref{it:growth-functions:id-vs-unity} of \cref{ex:growth-functions}, the cell space $\mathcal{R}$ does not dominate $\equivclass{k \mapsto k}_\equivalent$.
  \end{proof}


  \begin{definition} 
  \label{def:growth}
    The cell space $\mathcal{R}$ is said to have
    \begin{enumerate} 
      \item \define{exponential growth}\graffito{exponential growth} if and only if its growth type $\gamma(\mathcal{R})$ is equal to $\equivclass{\exp}_\equivalent$;
      \item \define{sub-exponential growth}\graffito{sub-exponential growth} if and only if it does not have exponential growth;
      \item \define{polynomial growth}\graffito{polynomial growth} if and only if there is a non-negative integer $d \in \N_0$ such that $\gamma_S$ is dominated by $[k \mapsto k^d]$. 
      \item \define{intermediate growth}\graffito{intermediate growth} if and only if it has sub-exponential growth but not polynomial growth. \qedhere
    \end{enumerate}
  \end{definition}

  \begin{lemma} 
    Let $\mathcal{R}$ have polynomial growth. The cell space $\mathcal{R}$ has sub-exponential growth. 
  \end{lemma}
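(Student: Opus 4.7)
The plan is to unfold the definitions and apply \cref{lem:growth-function-dominated-by-poly-is-dominated-by-exp-and-inequivalent} directly. By hypothesis, $\mathcal{R}$ has polynomial growth, so by \cref{def:growth} there exists a non-negative integer $d \in \N_0$ such that $[k \mapsto k^d] \dominates \gamma_S$. Note that polynomial growth is defined in terms of a particular finite and symmetric right generating set $S$, but by \cref{cor:growth-functions-independent-of-generating-set-and-gamma-dominated-by-exp} the growth type of $\mathcal{R}$ does not depend on that choice, so it suffices to work with this $S$.

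Next, I would invoke \cref{lem:growth-function-dominated-by-poly-is-dominated-by-exp-and-inequivalent} applied to $\gamma = \gamma_S$ and the integer $d$ obtained above. This yields $\exp \inequivalent \gamma_S$, which means $\equivclass{\gamma_S}_\equivalent \neq \equivclass{\exp}_\equivalent$. Since $\gamma(\mathcal{R}) = \equivclass{\gamma_S}_\equivalent$ by \cref{def:growth-type-of-cell-space}, we conclude that $\gamma(\mathcal{R}) \neq \equivclass{\exp}_\equivalent$; that is, $\mathcal{R}$ does not have exponential growth. By the second clause of \cref{def:growth} this is precisely what it means for $\mathcal{R}$ to have sub-exponential growth.

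There is no real obstacle in this argument: the content is already concentrated in \cref{lem:growth-function-dominated-by-poly-is-dominated-by-exp-and-inequivalent}, which in turn rests on \cref{it:growth-functions:exp-dominates-polynomials} of \cref{ex:growth-functions}. The only point worth stating explicitly in the write-up is the identification of $\gamma(\mathcal{R})$ with the equivalence class of $\gamma_S$, so that failure of $\equivalent$-equivalence between $\gamma_S$ and $\exp$ translates into failure of equality between the two growth types.
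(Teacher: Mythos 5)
Your argument is correct and is essentially identical to the paper's own proof: both extract $d \in \N_0$ with $[k \mapsto k^d] \dominates \gamma_S$ from the definition of polynomial growth, apply \cref{lem:growth-function-dominated-by-poly-is-dominated-by-exp-and-inequivalent} to conclude $\gamma_S \inequivalent \exp$, and hence $\gamma(\mathcal{R}) \neq \equivclass{\exp}_\equivalent$. The extra remark about independence of the generating set is harmless but not needed.
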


  \begin{proof}
    There is a $d \in \N_0$ such that $[k \mapsto k^d] \dominates \gamma_S$. Hence, according to \cref{lem:growth-function-dominated-by-poly-is-dominated-by-exp-and-inequivalent}, $\gamma_S \inequivalent \exp$. In conclusion, $\gamma(\mathcal{R}) \neq \equivclass{\exp}_\equivalent$.
  \end{proof}

  \section{Growth Rates}
  \label{sec:growth-rates}

  In this section, let $\mathcal{R} = \ntuple{\ntuple{M, G, \leftaction}, \ntuple{m_0, \family{g_{m_0, m}}_{m \in M}}}$ be a cell space such that there is a finite and symmetric right generating set $S$ of $\mathcal{R}$.

  In \cref{def:growth-rate} we define the $S$-growth rate of $\mathcal{R}$. And in \cref{lem:growth-rate-greater-than-one-iff-exp-growth} show how that growth rate and exponential growth relate to each other.

  \begin{lemma} 
  \label{lem:growth-rate-exists-and-is-greater-than-or-equal-to-one}
    The sequence $\sequence{\sqrt[k]{\gamma_S(k)}}_{k \in \N_0}$ converges to $\inf_{k \in \N_0} \sqrt[k]{\gamma_S(k)} \in \R_{\geq 1}$.
  \end{lemma}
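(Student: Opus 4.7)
The plan is to derive this from a submultiplicativity property of $\gamma_S$ together with Fekete's lemma (in its multiplicative form).

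First I would establish submultiplicativity: for all $k, k' \in \N_0$,
\begin{equation*}
  \gamma_S(k + k') \leq \gamma_S(k) \cdot \gamma_S(k').
\end{equation*}
This follows from \cref{cor:liberation-of-balls-yields-bigger-one} applied at $m = m_0$, which gives $\ball_S(k + k') = \ball_S(k) \rightsemiaction \ball_S(k')$. The map $\ball_S(k) \times \ball_S(k') \to \ball_S(k + k')$, $(m, m'') \mapsto m \rightsemiaction m''$, is surjective, so comparing cardinalities yields the inequality.

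Next I would invoke Fekete's lemma. Taking logarithms, the sequence $\sequence{\log \gamma_S(k)}_{k \in \N_+}$ is subadditive, so $\sequence{(\log \gamma_S(k))/k}_{k \in \N_+}$ converges to its infimum; equivalently, $\sequence{\sqrt[k]{\gamma_S(k)}}_{k \in \N_+}$ converges to $L := \inf_{k \in \N_+} \sqrt[k]{\gamma_S(k)}$. For completeness I would include the short standard proof: given $\varepsilon > 0$, pick $k_0 \in \N_+$ with $\sqrt[k_0]{\gamma_S(k_0)} < L + \varepsilon$. For any $k \in \N_+$, write $k = q k_0 + r$ with $0 \leq r < k_0$; iterating submultiplicativity yields $\gamma_S(k) \leq \gamma_S(k_0)^q \cdot \gamma_S(r)$ (where $\gamma_S(0) = 1$), hence
\begin{equation*}
  \sqrt[k]{\gamma_S(k)} \leq \gamma_S(k_0)^{q/k} \cdot \gamma_S(r)^{1/k}.
\end{equation*}
Since $q/k \to 1/k_0$ and $\gamma_S(r)^{1/k} \to 1$ as $k \to \infty$ (because $r$ stays bounded in the finite range $\set{0, 1, \dotsc, k_0 - 1}$), the limit superior of $\sqrt[k]{\gamma_S(k)}$ is at most $\gamma_S(k_0)^{1/k_0} < L + \varepsilon$. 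Combined with the trivial $\sqrt[k]{\gamma_S(k)} \geq L$, this yields convergence to $L$.

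Finally, $L \geq 1$ is immediate: for every $k \in \N_+$ we have $m_0 \in \ball_S(k)$, so $\gamma_S(k) \geq 1$ and thus $\sqrt[k]{\gamma_S(k)} \geq 1$. The only genuinely subtle step is the submultiplicativity, which hinges crucially on \cref{cor:liberation-of-balls-yields-bigger-one}; everything else is the textbook Fekete argument.
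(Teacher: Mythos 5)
Your proposal is correct and follows essentially the same route as the paper: submultiplicativity $\gamma_S(k+k') \leq \gamma_S(k)\gamma_S(k')$ via \cref{cor:liberation-of-balls-yields-bigger-one}, then the multiplicative Fekete lemma (which the paper cites as \cite[Lemma~6.5.1]{ceccherini-silberstein:coornaert:2010} rather than reproving), then $\gamma_S(k) \geq 1$ to place the limit in $\R_{\geq 1}$. The only difference is that you spell out the standard Fekete argument, which the paper outsources to a citation.
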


  \begin{proof}
    According to \cref{cor:liberation-of-balls-yields-bigger-one},
    \begin{align*}
      \gamma_S(k + k') &=    \abs{\ball_S(k + k')}\\
                       &=    \abs{\ball_S(k) \rightsemiaction \ball_S(k')}\\
                       &\leq \abs{\ball_S(k)} \cdot \abs{\ball_S(k')}\\
                       &=    \gamma_S(k) \cdot \gamma_S(k').
    \end{align*}
    Hence, according to \cite[Lemma~6.5.1]{ceccherini-silberstein:coornaert:2010}, the sequence $\sequence{\sqrt[k]{\gamma_S(k)}}_{k \in \N_0}$ converges to $\inf_{k \in \N_0} \sqrt[k]{\gamma_S(k)}$. Moreover, because, for each $k \in \N_0$, we have $\gamma_S(k) \geq 1$, that limit point must be in $\R_{\geq 1}$.
  \end{proof}

  \begin{definition} 
  \label{def:growth-rate}
    The limit point $\lambda_S = \lim_{k \to \infty} \sqrt[k]{\gamma_S(k)}$ is called \define{$S$-growth rate of $\mathcal{R}$}\graffito{$S$-growth rate $\lambda_S$ of $\mathcal{R}$}.
  \end{definition}

  \begin{lemma} 
  \label{lem:growth-rate-greater-than-one-iff-exp-growth}
    The $S$-growth rate $\lambda_S$ of $\mathcal{R}$ is greater than $1$ if and only if the cell space $\mathcal{R}$ has exponential growth. 
  \end{lemma}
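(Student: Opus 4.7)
\begin{proof-idea}
The plan is to sandwich $\gamma_S$ between two exponential functions whose bases both exceed $1$ precisely when $\lambda_S > 1$. The upper bound $\gamma_S(k) \leq (1 + \abs{S})^k$ is already recorded in \cref{rem:growth-function-of-cell-space}. For the matching lower bound, I will invoke \cref{lem:growth-rate-exists-and-is-greater-than-or-equal-to-one}, which tells us that $\lambda_S = \inf_{k \in \N_0} \sqrt[k]{\gamma_S(k)}$; this immediately yields $\gamma_S(k) \geq \lambda_S^k$ for every $k \in \N_+$.

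For the forward direction, assume $\lambda_S > 1$. The inequality $\gamma_S(k) \geq \lambda_S^k$ gives $\gamma_S \dominates [k \mapsto \lambda_S^k]$, and $\gamma_S(k) \leq (1 + \abs{S})^k$ gives $\gamma_S \dominatedby [k \mapsto (1 + \abs{S})^k]$. Since both $\lambda_S$ and $1 + \abs{S}$ lie in $\R_{>1}$, \cref{it:growth-functions:exponential-growth} of \cref{ex:growth-functions} makes both of these exponential functions equivalent to $\exp$, so $\gamma_S \equivalent \exp$ and $\mathcal{R}$ has exponential growth.

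For the backward direction, suppose $\mathcal{R}$ has exponential growth, so that $\gamma_S \equivalent \exp$ and in particular $\gamma_S \dominates \exp$. Unpacking the definition of $\dominates$, there is $\alpha \in \N_+$ with $\alpha \cdot \gamma_S(\alpha k) \geq \euler^k$ for every $k \in \N_+$. Taking the $(\alpha k)$-th root gives
\begin{equation*}
  \sqrt[\alpha k]{\gamma_S(\alpha k)} \geq \parens[\big]{\euler^k / \alpha}^{1/(\alpha k)} = \euler^{1/\alpha} \cdot \alpha^{-1/(\alpha k)},
\end{equation*}
and the right-hand side tends to $\euler^{1/\alpha} > 1$ as $k \to \infty$. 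Because the full sequence $\sequence{\sqrt[k]{\gamma_S(k)}}_{k \in \N_0}$ converges to $\lambda_S$ by \cref{lem:growth-rate-exists-and-is-greater-than-or-equal-to-one}, its subsequence indexed by $\alpha k$ does as well, so $\lambda_S \geq \euler^{1/\alpha} > 1$.

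I do not foresee a serious obstacle; the only care required is in the backward direction, where one must pass from the combinatorial domination inequality to a statement about the limit of $k$-th roots by extracting the appropriate subsequence and invoking the existence of the limit rather than trying to manipulate the infimum directly.
\end{proof-idea}
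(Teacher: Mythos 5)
Your proposal is correct and follows essentially the same route as the paper: both directions use $\gamma_S(k) \geq \lambda_S^k$ from \cref{lem:growth-rate-exists-and-is-greater-than-or-equal-to-one} together with the exponential upper bound for the forward implication, and extract the subsequence $\sequence{\sqrt[\alpha k]{\gamma_S(\alpha k)}}_{k}$ from the domination inequality for the converse. The only cosmetic difference is that you cite the explicit bound $\gamma_S(k) \leq (1+\abs{S})^k$ from \cref{rem:growth-function-of-cell-space} directly, whereas the paper routes this through \cref{cor:growth-function-dominated-by-exp}, which is proved in exactly that way.
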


  \begin{proof} 
    First, let $\lambda_S > 1$. According to \cref{lem:growth-rate-exists-and-is-greater-than-or-equal-to-one}, for each $k \in \N_0$, we have $\sqrt[k]{\gamma_S(k)} \geq \lambda_S$ and hence $\gamma_S(k) \geq \lambda_S^k$. Therefore, $\gamma_S$ dominates $\lambda_S^{(\blank)}$ and, because $\lambda_S > 1$, the growth function $\lambda_S^{(\blank)}$ is equivalent to $\exp$, and thus $\gamma_S$ dominates $\exp$. Moreover, according to \cref{cor:growth-function-dominated-by-exp}, the growth function $\gamma_S$ is dominated by $\exp$. Altogether, $\gamma_S$ is equivalent to $\exp$. In conclusion, $\gamma(\mathcal{R}) = \equivclass{\gamma_S}_\equivalent = \equivclass{\exp}_\equivalent$.

    Secondly, let $\gamma(\mathcal{R}) = \equivclass{\exp}_\equivalent$. Then, $\gamma_S$ and $\exp$ are equivalent. In particular, $\gamma_S$ dominates $\exp$. Hence, there is a $\alpha \in \N_+$ such that, for each $k \in \N_+$, we have $\alpha \gamma_S(\alpha k) \geq \exp(k)$. Therefore, for each $k \in \N_+$, 
    \begin{align*}
      \sqrt[\alpha k]{\alpha} \sqrt[\alpha k]{\gamma_S(\alpha k)}
      &=    \sqrt[\alpha k]{\alpha \gamma_S(\alpha k)}\\
      &\geq \sqrt[\alpha k]{\exp(k)}\\
      &=    \sqrt[\alpha]{\euler}.
    \end{align*}
    Thus, because $\sequence{\sqrt[\alpha k]{\alpha}}_{k \in \N_+}$ converges to $1$ and $\sequence{\sqrt[\alpha k]{\gamma_S(\alpha k)}}_{k \in \N_+}$, as subsequence of $\sequence{\sqrt[k]{\gamma_S(k)}}_{k \in \N_0}$, converges to $\lambda_S$, we conclude $\lambda_S \geq \sqrt[\alpha]{\euler} > 1$.
  \end{proof}

  \begin{corollary} 
  \label{cor:growth-rate-equal-to-one-iff-subexp-growth}
    The $S$-growth rate $\lambda_S$ of $\mathcal{R}$ is equal to $1$ if and only if the cell space $\mathcal{R}$ has sub-exponential growth.
  \end{corollary}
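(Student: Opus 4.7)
The plan is to obtain this as an immediate consequence of the preceding \cref{lem:growth-rate-greater-than-one-iff-exp-growth}, using two ingredients already established: the lower bound $\lambda_S \geq 1$ from \cref{lem:growth-rate-exists-and-is-greater-than-or-equal-to-one}, and the \emph{definition} of sub-exponential growth from \cref{def:growth}, namely that $\mathcal{R}$ has sub-exponential growth if and only if it does not have exponential growth.

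First, I would observe that by \cref{lem:growth-rate-exists-and-is-greater-than-or-equal-to-one} we always have $\lambda_S \in \R_{\geq 1}$, so the condition $\lambda_S = 1$ is equivalent to the negation of the condition $\lambda_S > 1$. Next, by \cref{lem:growth-rate-greater-than-one-iff-exp-growth}, the condition $\lambda_S > 1$ is equivalent to $\mathcal{R}$ having exponential growth, so its negation is equivalent to $\mathcal{R}$ not having exponential growth. Finally, by \cref{def:growth}, $\mathcal{R}$ does not have exponential growth if and only if it has sub-exponential growth. Chaining these three equivalences yields the claim.

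There is essentially no obstacle here; this is a pure logical combination of an already-proved characterisation and a definition, together with the trivial dichotomy on the value of $\lambda_S$ relative to $1$. The substantive content sits entirely in \cref{lem:growth-rate-greater-than-one-iff-exp-growth}, which has already been proved.
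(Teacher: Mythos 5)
Your proposal is correct and matches the paper's argument exactly: the paper likewise derives this corollary directly from \cref{lem:growth-rate-greater-than-one-iff-exp-growth}, with the dichotomy $\lambda_S = 1$ versus $\lambda_S > 1$ guaranteed by \cref{lem:growth-rate-exists-and-is-greater-than-or-equal-to-one} and sub-exponential growth being by definition the negation of exponential growth. You have merely spelled out the chain of equivalences that the paper leaves implicit.
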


  \begin{proof}
    This is a direct consequence of \cref{lem:growth-rate-greater-than-one-iff-exp-growth}.
  \end{proof}

  \begin{corollary} 
    Let $S'$ be a finite and symmetric right generating set of $\mathcal{R}$. The $S$-growth rate $\lambda_S$ of $\mathcal{R}$ is equal to $1$ or greater than $1$ if and only if the $S'$-growth rate $\lambda_{S'}$ of $\mathcal{R}$ is equal to $1$ or greater than $1$ respectively.
  \end{corollary}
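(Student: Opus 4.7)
The plan is to observe that this statement is essentially a direct consequence of the two preceding results, combined with the fact that the growth type of $\mathcal{R}$ is an invariant of the cell space and not of the chosen finite symmetric right generating set.

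First, I would invoke \cref{cor:growth-rate-equal-to-one-iff-subexp-growth} applied to $S$: the rate $\lambda_S$ equals $1$ if and only if $\mathcal{R}$ has sub-exponential growth. The defining condition of sub-exponential growth (\cref{def:growth}) refers to $\gamma(\mathcal{R})$, not to the chosen generator, and \cref{cor:growth-functions-independent-of-generating-set-and-gamma-dominated-by-exp} guarantees that $\gamma(\mathcal{R})$ is well-defined independently of $S$. Therefore, applying \cref{cor:growth-rate-equal-to-one-iff-subexp-growth} again, this time to $S'$, yields that $\lambda_{S'} = 1$ if and only if $\mathcal{R}$ has sub-exponential growth. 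Chaining the two equivalences gives $\lambda_S = 1 \iff \lambda_{S'} = 1$.

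For the remaining case, I would apply \cref{lem:growth-rate-greater-than-one-iff-exp-growth} to $S$ and to $S'$ in exactly the same way: both $\lambda_S > 1$ and $\lambda_{S'} > 1$ are equivalent to $\mathcal{R}$ having exponential growth, i.e., to $\gamma(\mathcal{R}) = \equivclass{\exp}_\equivalent$, a property again intrinsic to $\mathcal{R}$ by \cref{cor:growth-functions-independent-of-generating-set-and-gamma-dominated-by-exp}. Hence $\lambda_S > 1 \iff \lambda_{S'} > 1$.

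Since both rates lie in $\R_{\geq 1}$ by \cref{lem:growth-rate-exists-and-is-greater-than-or-equal-to-one}, the dichotomy \enquote{equal to $1$} versus \enquote{greater than $1$} is exhaustive, and the two equivalences above together establish the corollary. There is no real obstacle here: the work has already been done in the preceding lemma and corollary, and this statement is essentially a bookkeeping remark recording that the growth rate's position relative to $1$ is a cell-space invariant, even though its exact numerical value may depend on the choice of generating set.
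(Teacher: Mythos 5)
Your proof is correct and follows exactly the paper's route: the paper simply cites \cref{cor:growth-rate-equal-to-one-iff-subexp-growth} and \cref{lem:growth-rate-greater-than-one-iff-exp-growth}, which is precisely the two-case argument you spell out. Your additional remarks on the generator-independence of $\gamma(\mathcal{R})$ and the exhaustiveness of the dichotomy via \cref{lem:growth-rate-exists-and-is-greater-than-or-equal-to-one} just make explicit what the paper leaves implicit.
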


  \begin{proof} 
    This is a direct consequence of \cref{cor:growth-rate-equal-to-one-iff-subexp-growth} and \cref{lem:growth-rate-greater-than-one-iff-exp-growth}.
  \end{proof}

  \section{Amenability, Følner Conditions/Nets, and Isoperimetric Constants} 
  \label{sec:folner-cond}

  In this section, let $\mathcal{R} = \ntuple{\ntuple{M, G, \leftaction}, \ntuple{m_0, \family{g_{m_0, m}}_{m \in M}}}$ be a finitely right generated cell space such that the stabiliser $G_0$ is finite, and let $S$ be a finite and symmetric right generating set of $\mathcal{R}$. 

  In \cref{def:isoperimetric-constant} we define the $S$-isoperimetric constant of $\mathcal{R}$, which measures, broadly speaking, the invariance under $\rightsemiaction\restriction_{M \times S}$ that a finite subset of $M$ can have, where $0$ means maximally and $1$ minimally invariant. In \cref{thm:folner-condition-for-finitely-right-generated-group-sets} we show that $\mathcal{R}$ is right amenable if and only if a kind of Følner condition holds, which in turn holds if and only if the $S$-isoperimetric constant is $0$. And in \cref{thm:k-boundary-characterisation-of-folner-net} we characterise right Følner nets using $\rho$-boundaries.

  \begin{remark}
  \label{rem:liberation-of-setminus-is-liberation-of-setminus-of-liberation-and-}
    Let $\mathfrak{g}$ and $\mathfrak{g}'$ be two elements of $G \quotient G_0$, and let $A$, $B$, and $C$ be three sets. Then,
    \begin{equation*}
      (\blank \rightsemiaction \mathfrak{g})^{-1}(A \smallsetminus B) = (\blank \rightsemiaction \mathfrak{g})^{-1}(A) \smallsetminus (\blank \rightsemiaction \mathfrak{g})^{-1}(B)
    \end{equation*}
    and
    \begin{equation*}
      \big((\blank \rightsemiaction \mathfrak{g}) \rightsemiaction \mathfrak{g}'\big)^{-1}(A) = (\blank \rightsemiaction \mathfrak{g})^{-1}\big((\blank \rightsemiaction \mathfrak{g}')^{-1}(A)\big).
    \end{equation*}
  \end{remark}

  \begin{remark} 
  \label{rem:a-setminus-b-leq-a-setminus-c-plus-c-setminus-b}
    Let $A$, $B$, and $C$ be three finite sets. Then,
    \begin{equation*}
      \abs{A \smallsetminus B} \leq \abs{A \smallsetminus C} + \abs{C \smallsetminus B}. \qedhere 
    \end{equation*}
  \end{remark}

  \begin{definition}
  \label{def:isoperimetric-constant}
    Let $E$ be a subset of $G \quotient G_0$ and let $\mathcal{F}$ be the set $\set{F \subseteq M \suchthat F \text{ is non-empty and finite}}$. The non-negative real number 
    \begin{equation*} 
      \iota_E(\mathcal{R}) = \inf_{F \in \mathcal{F}} \frac{\abs{\bigcup_{e \in E} F \smallsetminus (\blank \rightsemiaction e)^{-1}(F)}}{\abs{F}}
    \end{equation*}
    is called \define{$E$-isoperimetric constant of $\mathcal{R}$}\graffito{$E$-isoperimetric constant $\iota_E(\mathcal{R})$ of $\mathcal{R}$}.
  \end{definition}

  \begin{lemma}
  \label{lem:liberation-minus-liberation-lib-s-subseteq-bigcup-liberation-of-setminus-liberation-gzero-s}
    Let $A$ be a subset of $M$, let $\mathfrak{g}$ and $\mathfrak{g}'$ be two elements of $G \quotient G_0$, and identify $M$ with $G \quotient G_0$ by $[m \mapsto G_{m_0,m}]$. Then,
    \begin{equation*}
      (\blank \rightsemiaction \mathfrak{g})^{-1}(A) \smallsetminus \big(\blank \rightsemiaction (\mathfrak{g} \rightsemiaction \mathfrak{g}')\big)^{-1}(A)
      \subseteq \bigcup_{g_0 \in G_0} (\blank \rightsemiaction \mathfrak{g})^{-1}\big(A \smallsetminus (\blank \rightsemiaction g_0 \cdot \mathfrak{g}')^{-1}(A)\big).
    \end{equation*}
  \end{lemma}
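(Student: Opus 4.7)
The plan is to unfold the set-theoretic statement pointwise, picking an arbitrary $m$ in the left-hand side, and then producing an explicit $g_0 \in G_0$ that places $m$ into one of the sets on the right-hand side. Concretely, $m$ belongs to the left-hand side precisely when $m \rightsemiaction \mathfrak{g} \in A$ and $m \rightsemiaction (\mathfrak{g} \rightsemiaction \mathfrak{g}') \notin A$, so the task reduces to finding $g_0 \in G_0$ with $(m \rightsemiaction \mathfrak{g}) \rightsemiaction g_0 \cdot \mathfrak{g}' \notin A$.

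The key algebraic step will be a semi-associativity rewrite of the form
\begin{equation*}
  m \rightsemiaction (\mathfrak{g} \rightsemiaction \mathfrak{g}') = (m \rightsemiaction \mathfrak{g}) \rightsemiaction g_0 \cdot \mathfrak{g}'
\end{equation*}
for some $g_0 \in G_0$ depending on $m$ and on a representative of $\mathfrak{g}$. I would obtain this identity directly from the second defining axiom of a right semi-action with defect $G_0$: picking any $g \in \mathfrak{g}$ (for instance $g = g_{m_0, \mathfrak{g}}$), the axiom produces a $g_0 \in G_0$ such that $m \rightsemiaction g \cdot \mathfrak{g}'' = (m \rightsemiaction \mathfrak{g}) \rightsemiaction g_0 \cdot \mathfrak{g}''$ for every $\mathfrak{g}'' \in G \quotient G_0$; specialising $\mathfrak{g}''$ to $\mathfrak{g}'$ and using that, under the identification of $M$ with $G \quotient G_0$, one has $\mathfrak{g} \rightsemiaction \mathfrak{g}' = g_{m_0, \mathfrak{g}} \leftaction \mathfrak{g}'$ by \cref{lem:liberation-under-identification-of-quotient-set-with-M}, gives exactly the required identity. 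Alternatively, this is essentially the content of \cref{lem:almost-associativity-of-liberation-under-identification}, which I can cite instead of redoing the derivation.

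With the rewrite in hand, the rest is bookkeeping: the hypothesis $m \rightsemiaction (\mathfrak{g} \rightsemiaction \mathfrak{g}') \notin A$ translates to $(m \rightsemiaction \mathfrak{g}) \rightsemiaction g_0 \cdot \mathfrak{g}' \notin A$, i.e.\ $m \rightsemiaction \mathfrak{g} \notin (\blank \rightsemiaction g_0 \cdot \mathfrak{g}')^{-1}(A)$, which together with $m \rightsemiaction \mathfrak{g} \in A$ places $m \rightsemiaction \mathfrak{g}$ in $A \smallsetminus (\blank \rightsemiaction g_0 \cdot \mathfrak{g}')^{-1}(A)$. Applying $(\blank \rightsemiaction \mathfrak{g})^{-1}$ then puts $m$ into the corresponding summand on the right-hand side, and taking the union over $g_0 \in G_0$ completes the inclusion.

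The only real subtlety, and the step that I would be careful with, is ensuring that the $g_0$ promised by the semi-action axiom can be produced without depending on $\mathfrak{g}'$ in a way that breaks the argument; but the axiom as stated furnishes a single $g_0$ valid uniformly in $\mathfrak{g}''$, so specialisation at $\mathfrak{g}'' = \mathfrak{g}'$ is legitimate and no further work is required.
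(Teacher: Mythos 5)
Your proposal is correct and follows essentially the same route as the paper's proof: take $m$ in the left-hand side, use the semi-action axiom (equivalently \cref{lem:almost-associativity-of-liberation-under-identification}) to rewrite $m \rightsemiaction (\mathfrak{g} \rightsemiaction \mathfrak{g}')$ as $(m \rightsemiaction \mathfrak{g}) \rightsemiaction g_0 \cdot \mathfrak{g}'$ for some $g_0 \in G_0$, and then unwind the preimages. The paper merely packages the final bookkeeping as set-level identities for preimages of differences and compositions (\cref{rem:liberation-of-setminus-is-liberation-of-setminus-of-liberation-and-}) where you argue pointwise, which is an immaterial difference.
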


  \begin{proof} 
    Let $m \in (\blank \rightsemiaction \mathfrak{g})^{-1}(A) \smallsetminus (\blank \rightsemiaction (\mathfrak{g} \rightsemiaction \mathfrak{g}'))^{-1}(A)$. Then, according to \cref{lem:almost-associativity-of-liberation-under-identification}, there is a $g_0 \in G_0$ such that $(m \rightsemiaction \mathfrak{g}) \rightsemiaction g_0 \cdot \mathfrak{g}' = m \rightsemiaction (\mathfrak{g} \rightsemiaction \mathfrak{g}') \notin A$. Therefore, $m \notin ((\blank \rightsemiaction \mathfrak{g}) \rightsemiaction g_0 \cdot \mathfrak{g}')^{-1}(A)$ and hence $m \in (\blank \rightsemiaction \mathfrak{g})^{-1}(A) \smallsetminus ((\blank \rightsemiaction \mathfrak{g}) \rightsemiaction g_0 \cdot \mathfrak{g}')^{-1}(A)$. Moreover, according to \cref{rem:liberation-of-setminus-is-liberation-of-setminus-of-liberation-and-}, we have $((\blank \rightsemiaction \mathfrak{g}) \rightsemiaction g_0 \cdot \mathfrak{g}')^{-1}(A) = (\blank \rightsemiaction \mathfrak{g})^{-1}((\blank \rightsemiaction g_0 \cdot \mathfrak{g}')^{-1}(A))$. Thus, according to \cref{rem:liberation-of-setminus-is-liberation-of-setminus-of-liberation-and-}, we have $(\blank \rightsemiaction \mathfrak{g})^{-1}(A) \smallsetminus ((\blank \rightsemiaction \mathfrak{g}) \rightsemiaction g_0 \cdot \mathfrak{g}')^{-1}(A) = (\blank \rightsemiaction \mathfrak{g})^{-1}(A \smallsetminus (\blank \rightsemiaction g_0 \cdot \mathfrak{g}')^{-1}(A))$. Hence, $m \in \bigcup_{g_0 \in G_0} (\blank \rightsemiaction \mathfrak{g})^{-1}(A \smallsetminus (\blank \rightsemiaction g_0 \cdot \mathfrak{g}')^{-1}(A))$. 
  \end{proof}

  \begin{theorem} 
  \label{thm:folner-condition-for-finitely-right-generated-group-sets} 
    The following statements are equivalent:
    \begin{enumerate}
      \item \label{it:folner-condition-for-finitely-right-generated-group-sets:right-amenable}
            The cell space $\mathcal{R}$ is right amenable;
      \item \label{it:folner-condition-for-finitely-right-generated-group-sets:folner-condition}
            For each positive real number $\varepsilon \in \R_{> 0}$, there is a non-empty and finite subset $F$ of $M$ such that
            \begin{equation}
            \label{eq:folner-condition-for-finitely-right-generated-group-sets:folner-condition}
              \ForEach s \in S \Holds \frac{\abs{F \smallsetminus (\blank \rightsemiaction s)^{-1}(F)}}{\abs{F}} < \varepsilon;
            \end{equation}
      \item \label{it:folner-condition-for-finitely-right-generated-group-sets:isoperimetric-constant}
            The isoperimetric constant $\iota_S(\mathcal{R})$ is $0$. \qedhere 
    \end{enumerate}
  \end{theorem}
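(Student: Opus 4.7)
The plan is to prove $(2) \iff (3)$ and $(1) \iff (2)$ separately, leveraging the Tarski--Følner characterisation cited in the preliminary notions (established in \cite{wacker:amenable:2016}): when $G_0$ is finite, $\mathcal{R}$ is right amenable if and only if a right Følner net in $\mathcal{R}$ exists.

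The equivalence $(2) \iff (3)$ is purely combinatorial: each $F \smallsetminus (\blank \rightsemiaction s)^{-1}(F)$ is contained in $\bigcup_{s' \in S} F \smallsetminus (\blank \rightsemiaction s')^{-1}(F)$, and the latter has cardinality at most $\sum_{s' \in S} \abs{F \smallsetminus (\blank \rightsemiaction s')^{-1}(F)}$. Combined with the finiteness of $S$, these inclusions convert between per-generator and union bounds via a rescaling of $\varepsilon$ by $\abs{S}$, so the per-generator Følner condition and $\iota_S(\mathcal{R}) = 0$ are equivalent.

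For $(1) \Rightarrow (2)$, the cited characterisation supplies a right Følner net $\set{F_i}_{i \in I}$; finiteness of $S$ lets us pick a single index $i$ at which all ratios $\abs{F_i \smallsetminus (\blank \rightsemiaction s)^{-1}(F_i)}/\abs{F_i}$, $s \in S$, are smaller than $\varepsilon$. The content of the theorem is $(2) \Rightarrow (1)$: for each $n \in \N_+$ I would pick $F_n$ witnessing (2) with $\varepsilon = 1/n$ and show that $\sequence{F_n}_{n \in \N_+}$ is a right Følner net, i.e., that $\abs{F_n \smallsetminus (\blank \rightsemiaction \mathfrak{g})^{-1}(F_n)}/\abs{F_n} \to 0$ for every $\mathfrak{g} \in G \quotient G_0$, not merely for $\mathfrak{g} \in S$. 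I proceed by induction on the minimal word length of $\mathfrak{g}$ over $S$ (well-defined since $S$ right generates $\mathcal{R}$ and is symmetric): writing $\mathfrak{g} = \mathfrak{g}' \rightsemiaction s$ with $s \in S$ and $\mathfrak{g}'$ shorter, \cref{rem:a-setminus-b-leq-a-setminus-c-plus-c-setminus-b} decomposes $F_n \smallsetminus (\blank \rightsemiaction \mathfrak{g})^{-1}(F_n)$ against $(\blank \rightsemiaction \mathfrak{g}')^{-1}(F_n)$, and \cref{lem:liberation-minus-liberation-lib-s-subseteq-bigcup-liberation-of-setminus-liberation-gzero-s} bounds the incremental piece by $\sum_{g_0 \in G_0} \abs{(\blank \rightsemiaction \mathfrak{g}')^{-1}(F_n \smallsetminus (\blank \rightsemiaction g_0 \cdot s)^{-1}(F_n))}$.

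The main obstacle is closing the induction, which requires a uniform bound $\abs{(\blank \rightsemiaction \mathfrak{g}')^{-1}(A)} \leq \abs{G_0} \abs{A}$. Under the identification of $M$ with $G \quotient G_0$, each fibre of $\blank \rightsemiaction \mathfrak{g}'$ corresponds to a coset of the conjugate subgroup $g' G_0 (g')^{-1}$ intersected with the transversal $\set{g_{m_0, m}}_{m \in M}$, hence has at most $\abs{G_0}$ elements. Since $G_0 \cdot S \subseteq S$ ensures $g_0 \cdot s \in S$, so that the generator-wise Følner condition applies to each term, the inductive step adds at most $\abs{G_0}^2/n$ to the ratio; iterating gives $\abs{F_n \smallsetminus (\blank \rightsemiaction \mathfrak{g})^{-1}(F_n)}/\abs{F_n} \leq \ell(\mathfrak{g}) \abs{G_0}^2/n \to 0$, where $\ell(\mathfrak{g})$ is the minimal word length. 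Invoking the cited characterisation then yields right amenability. Everything apart from the fibre-size bookkeeping is routine.
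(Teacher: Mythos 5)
Your proposal is correct and follows essentially the same route as the paper: the same telescoping along a word in $S$ via \cref{rem:a-setminus-b-leq-a-setminus-c-plus-c-setminus-b} and \cref{lem:liberation-minus-liberation-lib-s-subseteq-bigcup-liberation-of-setminus-liberation-gzero-s}, the same fibre bound $\abs{(\blank \rightsemiaction \mathfrak{g}')^{-1}(A)} \leq \abs{G_0}\abs{A}$ (which the paper imports from its companion paper rather than reproving), the same $\abs{G_0}^2$ bookkeeping using $G_0 \cdot S \subseteq S$, and the same appeal to the Tarski--Følner equivalence to convert the Følner condition into right amenability. The only cosmetic difference is that you build the Følner sequence $\sequence{F_n}_{n}$ directly with a $\mathfrak{g}$-dependent bound $\ell(\mathfrak{g})\abs{G_0}^2/n$, whereas the paper fixes a finite $E$, chooses $k$ and $\varepsilon$ uniformly for all of $E$, and then cites a lemma to pass to a net; both are valid.
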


  \begin{proof}
    \proofpart{\ref{it:folner-condition-for-finitely-right-generated-group-sets:right-amenable} $\implies$ \ref{it:folner-condition-for-finitely-right-generated-group-sets:folner-condition}}
      Let $\mathcal{R}$ be right amenable. Then, according to \cite[Main Theorem~4]{wacker:amenable:2016}, 
      there is a right Følner net in $\mathcal{R}$. Hence, according to \cite[Lemma~9]{wacker:amenable:2016}, 
      for each $\varepsilon \in \R_{> 0}$, there is a non-empty and finite $F \subseteq M$ such that \cref{eq:folner-condition-for-finitely-right-generated-group-sets:folner-condition} holds. 

    \proofpart{\ref{it:folner-condition-for-finitely-right-generated-group-sets:folner-condition} $\implies$ \ref{it:folner-condition-for-finitely-right-generated-group-sets:right-amenable}}
      For each $\varepsilon \in \R_{> 0}$, let there be a non-empty and finite $F \subseteq M$ such that \cref{eq:folner-condition-for-finitely-right-generated-group-sets:folner-condition} holds. Furthermore, let $\varepsilon' \in \R_{> 0}$, let $E \subseteq G \quotient G_0$ be finite, and identify $M$ with $G \quotient G_0$ by $[m \mapsto G_{m_0,m}]$. Then, according to \cref{lem:finite-set-contained-in-ball-if-gen-set-contains-neutral-element}, there is a $k \in \N_0$ such that
      \begin{equation*}
        E \subseteq \set{m \in M \suchthat \Exists \family{s_i'}_{i \in \set{1,2,\dotsc,k}} \subseteq S' \SuchThat (((m_0 \rightsemiaction s_1') \rightsemiaction s_2') \rightsemiaction \dotsb) \rightsemiaction s_k'},
      \end{equation*}
      where $S' = \set{G_0} \cup S$. Let $\varepsilon = \varepsilon' / (\abs{G_0}^2 \cdot k)$ and let $F \subseteq M$ be non-empty and finite such that \cref{eq:folner-condition-for-finitely-right-generated-group-sets:folner-condition} holds. Furthermore, let $e \in E$. Then, there is a $\family{s_i'}_{i \in \set{1,2,\dotsc,k}} \subseteq S'$ such that $(((m_0 \rightsemiaction s_1') \rightsemiaction s_2') \rightsemiaction \dotsb) \rightsemiaction s_k' = e$. For each $i \in \set{0,1,\dotsc,k}$, let $m_i = (((m_0 \rightsemiaction s_1') \rightsemiaction s_2') \rightsemiaction \dotsb) \rightsemiaction s_i'$ and let $F_i = (\blank \rightsemiaction m_i)^{-1}(F)$. Note that $m_k = e$ and that $F_0 = F$. Then, according to \cref{rem:a-setminus-b-leq-a-setminus-c-plus-c-setminus-b},
      \begin{align*}
        \abs{F \smallsetminus F_k} &=    \abs{F_0 \smallsetminus F_k}\\
                              &\leq \abs{F_0 \smallsetminus F_1} + \abs{F_1 \smallsetminus F_k}\\
                              &\leq \abs{F_0 \smallsetminus F_1} + \abs{F_1 \smallsetminus F_2} + \abs{F_2 \smallsetminus F_k}\\
                              &\leq \dotso\\
                              &\leq \sum_{i = 1}^k \abs{F_{i - 1} \smallsetminus F_i}.
      \end{align*}
      Let $i \in \set{1,2,\dotsc,k}$. Then, because $m_i = m_{i - 1} \rightsemiaction s_i'$, we have $F_{i - 1} \smallsetminus F_i = (\blank \rightsemiaction m_{i - 1})^{-1}(F) \smallsetminus (\blank \rightsemiaction (m_{i - 1} \rightsemiaction s_i'))^{-1}(F)$. Hence, according to \cref{lem:liberation-minus-liberation-lib-s-subseteq-bigcup-liberation-of-setminus-liberation-gzero-s}, we have $F_{i - 1} \smallsetminus F_i \subseteq \bigcup_{g_0 \in G_0} (\blank \rightsemiaction m_{i - 1})^{-1}(F \smallsetminus (\blank \rightsemiaction g_0 \cdot s_i')^{-1}(F))$. Therefore,
      \begin{align*}
        \abs{F_{i - 1} \smallsetminus F_i}
        &\leq \abs{\bigcup_{g_0 \in G_0} (\blank \rightsemiaction m_{i - 1})^{-1}(F \smallsetminus (\blank \rightsemiaction g_0 \cdot s_i')^{-1}(F))}\\
        &\leq \sum_{g_0 \in G_0} \abs{(\blank \rightsemiaction m_{i - 1})^{-1}(F \smallsetminus (\blank \rightsemiaction g_0 \cdot s_i')^{-1}(F))}.
      \end{align*}
      Thus, according to \cite[Corollary 1]{wacker:garden:2016}, 
      \begin{equation*}
        \abs{F_{i - 1} \smallsetminus F_i} \leq \sum_{g_0 \in G_0} \abs{G_0} \cdot \abs{F \smallsetminus (\blank \rightsemiaction g_0 \cdot s_i')^{-1}(F)}.
      \end{equation*}
      Hence, because $G_0 \cdot S' \subseteq S'$, $F \smallsetminus (\blank \rightsemiaction G_0)^{-1}(F) = F \smallsetminus F = \emptyset$, and \cref{eq:folner-condition-for-finitely-right-generated-group-sets:folner-condition} holds,
      \begin{align*}
        \abs{F_{i - 1} \smallsetminus F_i} &< \sum_{g_0 \in G_0} \abs{G_0} \cdot \varepsilon \cdot \abs{F}\\
                                      &= \abs{G_0}^2 \cdot \varepsilon \cdot \abs{F}\\
                                      &= \abs{G_0}^2 \cdot \frac{\varepsilon'}{\abs{G_0}^2 \cdot k} \cdot \abs{F}\\
                                      &= \frac{\varepsilon'}{k} \abs{F}.
      \end{align*}
      Therefore,
      \begin{equation*}
        \abs{F \smallsetminus F_k} \leq \sum_{i = 1}^k \abs{F_{i - 1} \smallsetminus F_i}
                              <    k \frac{\varepsilon'}{k} \abs{F}
                              =    \varepsilon' \abs{F}.
      \end{equation*}
      Thus, because $F_k = (\blank \rightsemiaction e)^{-1}(F)$,
      \begin{equation*}
        \frac{\abs{F \smallsetminus (\blank \rightsemiaction e)^{-1}(F)}}{\abs{F}} < \varepsilon'. 
      \end{equation*}
      Hence, according to \cite[Lemma~3.9]{wacker:amenable:2016}, 
      there is a right Følner net in $\mathcal{R}$. In conclusion, according to \cite[Theorem~5.1]{wacker:amenable:2016}, 
      the cell space $\mathcal{R}$ is right amenable.

    \proofpart{\ref{it:folner-condition-for-finitely-right-generated-group-sets:folner-condition} $\implies$ \ref{it:folner-condition-for-finitely-right-generated-group-sets:isoperimetric-constant}}
      Let $\varepsilon' \in \R_{> 0}$ and let $\varepsilon = \varepsilon' / \abs{S}$. Then, there is a non-empty and finite $F \subseteq M$ such that \cref{eq:folner-condition-for-finitely-right-generated-group-sets:folner-condition} holds. Therefore,
      \begin{equation*}
        \frac{\abs{\bigcup_{s \in S} F \smallsetminus (\blank \rightsemiaction s)^{-1}(F)}}{\abs{F}}
        \leq \sum_{s \in S} \frac{\abs{F \smallsetminus (\blank \rightsemiaction s)^{-1}(F)}}{\abs{F}}
        <    \abs{S} \cdot \varepsilon
        =    \varepsilon'.
      \end{equation*}
      In conclusion, $\iota_S(\mathcal{R}) = 0$.

    \proofpart{\ref{it:folner-condition-for-finitely-right-generated-group-sets:isoperimetric-constant} $\implies$ \ref{it:folner-condition-for-finitely-right-generated-group-sets:folner-condition}}
      Let $\varepsilon \in \R_{> 0}$. Then, because $\iota_S(\mathcal{R}) = 0$, there is a non-empty and finite $F \subseteq M$ such that
      \begin{equation*}
        \frac{\abs{\bigcup_{s \in S} F \smallsetminus (\blank \rightsemiaction s)^{-1}(F)}}{\abs{F}} < \varepsilon.
      \end{equation*}
      Hence, for each $s \in S$, because $F \smallsetminus (\blank \rightsemiaction s)^{-1}(F) \subseteq \bigcup_{s' \in S} F \smallsetminus (\blank \rightsemiaction s')^{-1}(F)$,
      \begin{equation*}
        \frac{\abs{F \smallsetminus (\blank \rightsemiaction s)^{-1}(F)}}{\abs{F}} < \varepsilon.
      \end{equation*}
      In conclusion, \cref{eq:folner-condition-for-finitely-right-generated-group-sets:folner-condition} holds.
  \end{proof}

  \begin{theorem} 
  \label{thm:k-boundary-characterisation-of-folner-net}
    Let $\net{F_i}_{i \in I}$ be a net in $\set{F \subseteq M \suchthat F \neq \emptyset, F \text{ finite}}$ indexed by $(I, \leq)$. It is a right Følner net in $\mathcal{R}$ if and only if 
    \begin{equation}
    \label{eq:k-boundary-characterisation-of-folner-net}
      \ForEach \rho \in \N_0 \Holds \lim_{i \in I} \frac{\abs{\boundary_\rho F_i}}{\abs{F_i}} = 0. \qedhere
    \end{equation}
  \end{theorem}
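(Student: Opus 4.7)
The plan is to prove both implications by turning the quantities $|F_i \setminus (\blank \rightsemiaction \mathfrak{g})^{-1}(F_i)|$ (controlling the Følner condition) into sums/unions involving the sets $F_i \rightsemiaction \mathfrak{h}$ for $\mathfrak{h}$ ranging over $\ball_S(\rho)$, and then comparing these with $\boundary_\rho F_i = F_i^{+\rho} \setminus F_i^{-\rho}$. Throughout I identify $M$ with $G \quotient G_0$ via $[m \mapsto G_{m_0,m}]$, so that $\ball_S(\rho) \subseteq M$ is also viewed as a finite subset of $G \quotient G_0$, and I use \cref{cor:ball-centred-at-mzero-to-m} freely to rewrite $m \rightsemiaction \ball_S(\rho) = \ball_S(m, \rho)$.

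For the direction $\boundary_\rho \Rightarrow$ Følner: fix $\mathfrak{g} \in G \quotient G_0$. Let $\rho = d_S(m_0, \mathfrak{g}) \in \N_0$, so that $\mathfrak{g} \in \ball_S(\rho)$. If $m \in F_i$ satisfies $m \rightsemiaction \mathfrak{g} \notin F_i$, then $m \rightsemiaction \mathfrak{g} \in \ball_S(m, \rho)$, hence $\ball_S(m, \rho) \not\subseteq F_i$, hence $m \notin F_i^{-\rho}$. Therefore
\begin{equation*}
  F_i \smallsetminus (\blank \rightsemiaction \mathfrak{g})^{-1}(F_i) \subseteq F_i \smallsetminus F_i^{-\rho} \subseteq F_i^{+\rho} \smallsetminus F_i^{-\rho} = \boundary_\rho F_i,
\end{equation*}
and dividing by $\abs{F_i}$ and applying \cref{eq:k-boundary-characterisation-of-folner-net} yields the Følner condition at $\mathfrak{g}$.

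For the converse, fix $\rho \in \N_0$ and enumerate $\ball_S(\rho) = \set{\mathfrak{g}_1, \dotsc, \mathfrak{g}_N}$; since $S$ is finite and symmetric and $G_0$ is finite, $N$ is finite. I will bound $|\boundary_\rho F_i|$ by $|\boundary_\rho^+ F_i| + |\boundary_\rho^- F_i|$. For the external part, \cref{lem:characterisation-of-k-closure-and-interior} gives $F_i^{+\rho} = \bigcup_{j=1}^N F_i \rightsemiaction \mathfrak{g}_j$, so
\begin{equation*}
  F_i^{+\rho} \smallsetminus F_i = \bigcup_{j=1}^N (F_i \rightsemiaction \mathfrak{g}_j) \smallsetminus F_i,
\end{equation*}
and the key observation is that the map $m \mapsto m \rightsemiaction \mathfrak{g}_j$ sends $F_i \smallsetminus (\blank \rightsemiaction \mathfrak{g}_j)^{-1}(F_i)$ onto $(F_i \rightsemiaction \mathfrak{g}_j) \smallsetminus F_i$, so the latter has cardinality at most $|F_i \smallsetminus (\blank \rightsemiaction \mathfrak{g}_j)^{-1}(F_i)|$. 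For the internal part, $m \in F_i \smallsetminus F_i^{-\rho}$ means $m \rightsemiaction \mathfrak{g}_j \notin F_i$ for some $j$, giving
\begin{equation*}
  F_i \smallsetminus F_i^{-\rho} \subseteq \bigcup_{j=1}^N \bigl(F_i \smallsetminus (\blank \rightsemiaction \mathfrak{g}_j)^{-1}(F_i)\bigr).
\end{equation*}
Combining the two estimates yields $\abs{\boundary_\rho F_i} \leq 2 \sum_{j=1}^N \abs{F_i \smallsetminus (\blank \rightsemiaction \mathfrak{g}_j)^{-1}(F_i)}$, and dividing by $\abs{F_i}$ and using the right Følner property for each of the finitely many $\mathfrak{g}_j$ gives $\lim_{i \in I} \abs{\boundary_\rho F_i} / \abs{F_i} = 0$.

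I do not expect a real obstacle; the only subtlety is verifying the surjection $F_i \smallsetminus (\blank \rightsemiaction \mathfrak{g}_j)^{-1}(F_i) \twoheadrightarrow (F_i \rightsemiaction \mathfrak{g}_j) \smallsetminus F_i$, which is immediate from the definition of preimage, and making sure in the first direction that $\rho$ can indeed be chosen so that $\mathfrak{g} \in \ball_S(\rho)$, which is exactly the finiteness of $d_S(m_0, \mathfrak{g})$ guaranteed because $S$ right generates $\mathcal{R}$.
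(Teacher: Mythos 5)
Your proof is correct, but it takes a genuinely different route from the paper's. The paper disposes of both directions in a few lines by citing an external result, \cite[Theorem~1]{wacker:garden:2016}, which characterises right F\o{}lner nets by the condition $\lim_{i \in I} \abs{\boundary_N F_i}/\abs{F_i} = 0$ for \emph{every} finite $N \subseteq G \quotient G_0$; all that remains there is to observe that $\boundary_\rho F_i = \boundary_{\ball_S(\rho)} F_i$ with $\ball_S(\rho)$ finite (one direction), and that every finite $N$ is contained in some ball so that $\boundary_N F_i \subseteq \boundary_{\ball_S(\rho)} F_i$ by monotonicity of $N \mapsto \boundary_N$ (the other direction, via \cite[Item~4 of Lemma~1]{wacker:garden:2016}). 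You instead prove the equivalence directly from the definitions: for one implication you note that $\mathfrak{g} \in \ball_S(\rho)$ for $\rho = \abs{\mathfrak{g}}_S$ forces $F_i \smallsetminus (\blank \rightsemiaction \mathfrak{g})^{-1}(F_i) \subseteq \boundary_\rho^- F_i \subseteq \boundary_\rho F_i$; for the other you split $\boundary_\rho F_i$ into its internal and external parts and bound each by $\sum_{\mathfrak{g} \in \ball_S(\rho)} \abs{F_i \smallsetminus (\blank \rightsemiaction \mathfrak{g})^{-1}(F_i)}$, the external part via the surjection $F_i \smallsetminus (\blank \rightsemiaction \mathfrak{g})^{-1}(F_i) \to (F_i \rightsemiaction \mathfrak{g}) \smallsetminus F_i$, $m \mapsto m \rightsemiaction \mathfrak{g}$. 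Both estimates are sound, and the finiteness of $\ball_S(\rho)$ (which needs $S$ finite, guaranteed by the section's standing hypotheses) lets you pass to the limit over the finite sum. In effect you have inlined, in the special case $N = \ball_S(\rho)$, the proof of the external theorem the paper leans on; what you lose in brevity you gain in self-containedness, and your argument makes visible exactly where the finiteness of $S$ and of $G_0$ enter.
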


  \begin{proof}
    First, let $\net{F_i}_{i \in I}$ be a right Følner net. Furthermore, let $\rho$ be a non-negative integer. Then, for each index $i \in I$,
    we have $\boundary_\rho F_i = \boundary_{\ball(\rho)} F_i$. And, according to \cref{rem:upper-bound-for-cardinality-of-balls}, the ball $\ball(\rho)$ is finite. Hence, according to 
    \cite[Theorem~1]{wacker:garden:2016},
    \begin{equation*}
      \lim_{i \in I} \frac{\abs{\boundary_{\ball(\rho)} F_i}}{\abs{F_i}} = 0.
    \end{equation*}
    In conclusion, \cref{eq:k-boundary-characterisation-of-folner-net} holds.

    Secondly, let \cref{eq:k-boundary-characterisation-of-folner-net} hold. Furthermore, let $N$ be a finite subset of $G \quotient G_0$. Then, according to \cref{rem:ball-of-radius-0-contains-one-element-and-sequence-of-balls-is-monotonic}, there is a non-negative integer $\rho$ such that $N \subseteq \ball(\rho)$. Hence, for each index $i \in I$, according to
    \cite[Item~4 of Lemma~1]{wacker:garden:2016},
    we have $\boundary_N F_i \subseteq \boundary_{\ball(\rho)} F_i = \boundary_\rho F_i$. Therefore,
    \begin{equation*}
      \lim_{i \in I} \frac{\abs{\boundary_N F_i}}{\abs{F_i}} = 0.
    \end{equation*}
    In conclusion, according to
    \cite[Theorem~1]{wacker:garden:2016},
    the net $\net{F_i}_{i \in I}$ is a right Følner net.
  \end{proof}

  \section{Subexponential Growth and Amenability}
  \label{sec:subexp-growth-amenability}

  In this section, let $\mathcal{R} = \ntuple{\ntuple{M, G, \leftaction}, \ntuple{m_0, \family{g_{m_0, m}}_{m \in M}}}$ be a finitely right generated cell space such that the stabiliser $G_0$ is finite.

  In \cref{thm:sub-exp-implies-amenable} we show that if $\mathcal{R}$ has sub-exponential growth, then it is right amenable. And in \cref{thm:group-sub-exp-implies-cell-space-sub-exp} we show that if $G$ has sub-exponential growth, then so has $\mathcal{R}$.

  \begin{lemma}[{\cite[Lemma~6.11.1]{ceccherini-silberstein:coornaert:2010}}] 
  \label{lem:liminf-of-frac-is-liminf-of-sqrt}
    Let $\sequence{r_k}_{k \in \N_0}$ be a sequence of positive real numbers. Then,
    \begin{equation*}
      \liminf_{k \to \infty} \frac{r_{k + 1}}{r_k} \leq \liminf_{k \to \infty} \sqrt[k]{r_k}. \qedhere 
    \end{equation*}
  \end{lemma}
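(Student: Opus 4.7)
The plan is to prove the classical comparison between the ratio and root tests by reducing to the case of a strict lower bound on the ratio, then passing to the limit. Write $L = \liminf_{k \to \infty} r_{k+1}/r_k$. If $L = 0$ the inequality is immediate since $\sqrt[k]{r_k} \geq 0$, so assume $L > 0$ and fix an arbitrary $\ell \in \R$ with $0 < \ell < L$. It suffices to show $\liminf_{k \to \infty} \sqrt[k]{r_k} \geq \ell$, for then letting $\ell \to L$ yields the claim.

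By the definition of liminf, there is an index $N \in \N_0$ such that $r_{k+1}/r_k \geq \ell$ for every $k \geq N$. Telescoping the product $r_k = r_N \cdot \prod_{j=N}^{k-1} (r_{j+1}/r_j)$ gives $r_k \geq r_N \cdot \ell^{k - N}$ for all $k \geq N$. Taking $k$-th roots,
\begin{equation*}
  \sqrt[k]{r_k} \geq \sqrt[k]{r_N \cdot \ell^{-N}} \cdot \ell.
\end{equation*}

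Since $r_N \cdot \ell^{-N}$ is a fixed positive constant, the sequence $\sqrt[k]{r_N \cdot \ell^{-N}}$ converges to $1$ as $k \to \infty$. Taking the liminf on both sides,
\begin{equation*}
  \liminf_{k \to \infty} \sqrt[k]{r_k} \geq \ell.
\end{equation*}
Since $\ell$ was arbitrary in $(0, L)$, we conclude $\liminf_{k \to \infty} \sqrt[k]{r_k} \geq L$. The only subtle point, and hence the main thing to get right, is the case distinction on whether $L$ is $0$, finite and positive, or $+\infty$; in the last case the same argument applies with $\ell$ arbitrarily large, forcing the right-hand liminf to equal $+\infty$ as well.
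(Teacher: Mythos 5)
Your argument is correct and is the standard proof of this comparison (fix $\ell$ below the liminf of the ratios, telescope to get $r_k \geq r_N\,\ell^{k-N}$, take $k$-th roots, and let $\ell$ tend to the liminf); the paper itself gives no proof but defers to the cited Lemma~6.11.1 of Ceccherini-Silberstein and Coornaert, whose proof proceeds in essentially the same way. Your handling of the edge cases $L = 0$ and $L = +\infty$ is also sound.
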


  \begin{main-theorem} 
  \label{thm:sub-exp-implies-amenable}
    Let the cell space $\mathcal{R}$ have sub-exponential growth. It is right amenable.
  \end{main-theorem}

  \begin{proof}
    Let $S$ be a finite and symmetric right generating set of $\mathcal{R}$. According to \cref{lem:liminf-of-frac-is-liminf-of-sqrt} and \cref{cor:growth-rate-equal-to-one-iff-subexp-growth},
    \begin{equation*}
      1 \leq \liminf_{k \to \infty} \frac{\gamma_S(k + 1)}{\gamma_S(k)}
        \leq \lim_{k \to \infty} \sqrt[k]{\gamma_S(k)}
        =    \lambda_S
        =    1.
    \end{equation*}
    Therefore, $\liminf_{k \to \infty} \frac{\gamma_S(k + 1)}{\gamma_S(k)} = 1$. 

    Let $\varepsilon \in \R_{> 0}$. Then, there is a $k \in \N_+$ such that $\frac{\gamma_S(k)}{\gamma_S(k - 1)} < 1 + \varepsilon$. Hence, $\gamma_S(k) - \gamma_S(k - 1) < \varepsilon \cdot \gamma_S(k - 1)$. 
    Furthermore, let $F = \ball_S(k)$ and let $s \in S$. Then, according to \cref{lem:ball-liberation-included-in-ball-one-larger}, we have $\ball_S(k - 1) \subseteq (\blank \rightsemiaction s)^{-1}(F)$. Therefore, because $\ball_S(k - 1) \subseteq F$ and $\gamma_S(k - 1) \leq \gamma_S(k)$,
    \begin{align*}
      \abs{F \smallsetminus (\blank \rightsemiaction s)^{-1}(F)}
      &\leq \abs{F \smallsetminus \ball_S(k - 1)}\\
      &=    \abs{F} - \abs{\ball_S(k - 1)}\\
      &=    \gamma_S(k) - \gamma_S(k - 1)\\
      &<    \varepsilon \cdot \gamma_S(k - 1)\\
      &\leq \varepsilon \cdot \gamma_S(k)\\
      &=    \varepsilon \cdot \abs{F}.
    \end{align*}
    In conclusion, according to \cref{thm:folner-condition-for-finitely-right-generated-group-sets}, the cell space $\mathcal{R}$ is right amenable.
  \end{proof}

  \begin{lemma} 
  \label{lem:group-dominates-cell-space-growth-rate}
    Let the group $G$ be finitely generated. The growth rate of $G$ dominates the one of $\mathcal{R}$.
  \end{lemma}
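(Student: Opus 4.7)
Pick a finite symmetric generating set $T$ of $G$ and let $T'' = G_0 \cdot T \cdot G_0$, which is again a finite symmetric generating set of $G$ since $T \subseteq T''$ and $(T'')^{-1} = G_0 \cdot T^{-1} \cdot G_0 = T''$. By \cref{lem:gen-set-of-group-induces-right-gen-set-of-action} the set $S = \set{g_0 \cdot t G_0 \suchthat g_0 \in G_0, t \in T}$ is a finite symmetric right generating set of $\mathcal{R}$. The plan is to compare the balls $\ball_S(k)$ in $M$ with the balls $\ball_{T''}(k)$ in $G$ via the orbit map $\phi \from G \to M$, $g \mapsto g \leftaction m_0$, which is surjective and whose fibres have cardinality $\abs{G_0}$.

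The key step will be to show $\ball_S(k) \subseteq \phi(\ball_{T''}(k))$ for every $k \in \N_0$. Given $m \in \ball_S(k)$, write $m = m_0 \rightsemiaction s_1 \rightsemiaction \dotsb \rightsemiaction s_j$ with $j \leq k$ and each $s_i = g'_{i,0} \cdot t_i G_0$, and set $m_i = m_0 \rightsemiaction s_1 \rightsemiaction \dotsb \rightsemiaction s_i$. Unfolding the semi-action by $m' \rightsemiaction g G_0 = g_{m_0, m'} g \leftaction m_0$ and writing, at each step, $g_{m_0, m_{i-1}} = g_{i-1} h_{i-1}$ for the lift $g_{i-1} \in G$ already constructed with $m_{i-1} = g_{i-1} \leftaction m_0$ and some $h_{i-1} \in G_0$, one obtains $m_i = g_i \leftaction m_0$ with $g_i = g_{i-1} \cdot (h_{i-1} g'_{i,0}) \cdot t_i$. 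Each factor $(h_{i-1} g'_{i,0}) \cdot t_i$ lies in $G_0 \cdot T \subseteq T''$, so $g_j$ is a product of $j$ elements of $T''$ and $\abs{g_j}_{T''} \leq j \leq k$; hence $m = \phi(g_j) \in \phi(\ball_{T''}(k))$.

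This inclusion yields $\gamma_S(k) \leq \abs{\ball_{T''}(k)} = \gamma_{T''}(k)$ for every $k$, so the growth function of $G$ with respect to $T''$ dominates $\gamma_S$ with constant $\alpha = 1$; taking $k$-th roots gives $\lambda_S \leq \lambda_{T''}$. Since $T$ and $T''$ are both finite symmetric generating sets of $G$ they yield equivalent growth functions (by the obvious analogue of \cref{cor:growth-functions-independent-of-generating-set-and-gamma-dominated-by-exp} for $G$ acting on itself by left multiplication), so the growth type—and, by \cref{cor:growth-rate-equal-to-one-iff-subexp-growth}, the sub-exponential/exponential dichotomy of growth rates—of $G$ dominates that of $\mathcal{R}$.

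I expect the main obstacle to be the inductive bookkeeping in the key step: at each iteration the coordinate system forces a stabiliser correction $h_{i-1} \in G_0$, and the trick is to absorb it into the $G_0$-prefix of the next letter $t_i$ so that each factor of the word lies in the finite set $G_0 \cdot T$. Without this absorption one would only conclude $g_j \in T^{(C+1) j}$ with $C = \max_{g_0 \in G_0} \abs{g_0}_T$, yielding a weaker domination constant proportional to $C$ rather than the clean $\alpha = 1$ bound.
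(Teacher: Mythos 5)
Your proof is correct and follows essentially the same route as the paper: both lift an $S$-path in $M$ to a word in $G$ by absorbing the stabiliser corrections $h_{i-1}\in G_0$ into the generators, obtaining $\gamma_S(k)\leq\gamma_{T''}(k)$. The only cosmetic difference is that the paper chooses $T$ with $G_0T\subseteq T$ from the outset, whereas you enlarge to $T''=G_0\cdot T\cdot G_0$ afterwards and then invoke independence of the growth type from the generating set.
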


  \begin{proof}
    There is a finite and symmetric generating set $T$ of $G$ such that $G_0 T \subseteq T$. And, according to \cref{lem:gen-set-of-group-induces-right-gen-set-of-action}, the set $S = \set{t G_0 \suchthat t \in T} = \set{g_0 \cdot t G_0 \suchthat g_0 \in G_0, t \in T}$ is a finite and symmetric right generating set of $\mathcal{R}$.

    Let $k \in \N_0$ be a non-negative integer. Furthermore, let $m$ be an element of $\ball_S^{\mathcal{R}}(k)$. Then, there is a non-negative integer $j \in \set{0,1,2,\dotsc,k}$ and a family $\family{s_i}_{i \in \set{1,2,\dotsc,j}}$ of elements in $S$ such that
    \begin{equation*}
      m = \Big(\big((m_0 \rightsemiaction s_1) \rightsemiaction s_2\big) \rightsemiaction \dotsb\Big) \rightsemiaction s_j.
    \end{equation*}
    And, by the definition of $S$, there is a family $\family{t_i}_{i \in \set{1,2,\dotsc,j}}$ of elements in $T$ such that $\family{t_i G_0}_{i \in \set{1,2,\dotsc,j}} = \family{s_i}_{i \in \set{1,2,\dotsc,j}}$. And, because $\rightsemiaction$ is a right semi-action, there is a family $\family{g_{i,0}}_{i \in \set{1,2,\dotsc,j}}$ of elements in $G_0$ such that
    \begin{align*}
      m
      &= (((m_0 \rightsemiaction s_1) \rightsemiaction s_2) \rightsemiaction \dotsb) \rightsemiaction s_j\\
      &= m_0 \rightsemiaction t_1 g_{2,0} t_2 g_{3,0} t_3 \dotsb g_{j,0} t_j G_0\\
      &= g_{1,0} t_1 g_{2,0} t_2 g_{3,0} t_3 \dotsb g_{j,0} t_j \leftaction m_0,
    \end{align*}
    where $g_{1,0} = g_{m_0, m_0}$. And, because $G_0 T \subseteq T$, the family $\family{g_{i,0} t_i}_{i \in \set{1,2,\dotsc,j}}$ is one of elements in $T$. Hence, $m \in \ball_T^G(j) \leftaction m_0 \subseteq \ball_T^G(k) \leftaction m_0$. Therefore, $\ball_S^{\mathcal{R}}(k) \subseteq \ball_T^G(k) \leftaction m_0$ and thus
    \begin{equation*}
      \abs{\ball_S^{\mathcal{R}}(k)} \leq \abs{\ball_T^G(k) \leftaction m_0}
                                     \leq \abs{\ball_T^G(k)}.
    \end{equation*}
    Hence, $\gamma_S^{\mathcal{R}}(k) \leq \gamma_T^G(k)$. In conclusion, $\gamma_T^G$ dominates $\gamma_S^{\mathcal{R}}$ and thus $\gamma(G)$ dominates $\gamma(\mathcal{R})$.
  \end{proof}

  \begin{theorem}
  \label{thm:group-sub-exp-implies-cell-space-sub-exp}
    Let the group $G$ be finitely generated and let it have sub-exponential growth. The cell space $\mathcal{R}$ has sub-exponential growth and is right amenable.
  \end{theorem}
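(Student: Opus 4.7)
The plan is to derive this theorem as a direct consequence of \cref{lem:group-dominates-cell-space-growth-rate} together with \cref{thm:sub-exp-implies-amenable}, with only a small contrapositive argument in between. The key point is that the preceding lemma already transfers growth information from $G$ down to $\mathcal{R}$ (in the form of domination of growth types), so the only work left is to observe that this transfer preserves the property of \emph{not} being of exponential growth.

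First I would fix a finite symmetric generating set $T$ of $G$ with $G_0 T \subseteq T$ (as in the proof of \cref{lem:group-dominates-cell-space-growth-rate}), which induces a finite symmetric right generating set $S$ of $\mathcal{R}$, and invoke \cref{lem:group-dominates-cell-space-growth-rate} to conclude $\gamma_T^G \dominates \gamma_S^{\mathcal{R}}$, hence $\gamma(G) \dominates \gamma(\mathcal{R})$. Next I would record the fact, completely analogous to \cref{rem:upper-bound-for-cardinality-of-balls} and \cref{cor:growth-function-dominated-by-exp} applied to the (standard) Cayley graph of the group $G$ on $T$, that $\gamma_T^G(k) \leq (1 + \abs{T})^k$; together with \cref{it:growth-functions:exponential-growth} of \cref{ex:growth-functions}, this yields $\gamma(G) \dominatedby \equivclass{\exp}_\equivalent$ unconditionally.

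Then the core step is a short contrapositive. Suppose for contradiction that $\mathcal{R}$ has exponential growth, that is, $\gamma(\mathcal{R}) = \equivclass{\exp}_\equivalent$. Then $\gamma(\mathcal{R}) \dominates \equivclass{\exp}_\equivalent$, and since $\dominates$ is transitive and $\gamma(G) \dominates \gamma(\mathcal{R})$, we obtain $\gamma(G) \dominates \equivclass{\exp}_\equivalent$. Combined with the upper bound $\gamma(G) \dominatedby \equivclass{\exp}_\equivalent$, this forces $\gamma(G) = \equivclass{\exp}_\equivalent$, contradicting the assumption that $G$ has sub-exponential growth. Therefore $\mathcal{R}$ has sub-exponential growth.

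Finally, applying \cref{thm:sub-exp-implies-amenable} to $\mathcal{R}$ (which is finitely right generated with finite stabiliser $G_0$ by the standing hypothesis of the section) immediately yields that $\mathcal{R}$ is right amenable, completing the proof. I do not anticipate any substantive obstacle: the nontrivial work has already been done in \cref{lem:group-dominates-cell-space-growth-rate} and \cref{thm:sub-exp-implies-amenable}, and the only subtlety to keep in mind is the mild but essential fact that the growth type of a finitely generated group is itself bounded above by $\equivclass{\exp}_\equivalent$, without which the contrapositive step would not close.
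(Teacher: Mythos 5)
Your proposal is correct and follows essentially the same route as the paper: invoke \cref{lem:group-dominates-cell-space-growth-rate} to get $\gamma(G) \dominates \gamma(\mathcal{R})$, deduce that sub-exponential growth passes from $G$ to $\mathcal{R}$, and then apply \cref{thm:sub-exp-implies-amenable}. The only difference is that you spell out the contrapositive step (using $\gamma(G) \dominatedby \equivclass{\exp}_\equivalent$ to rule out $\gamma(\mathcal{R}) = \equivclass{\exp}_\equivalent$), which the paper leaves implicit in its one-line citation of the lemma; your elaboration is accurate and fills a genuine, if small, gap in the paper's exposition.
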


  \begin{proof}
    According to \cref{lem:group-dominates-cell-space-growth-rate}, the cell space $\mathcal{R}$ has sub-exponential growth. Hence, according to \cref{thm:sub-exp-implies-amenable}, it is right amenable.
  \end{proof}




\end{document}